\documentclass{article}
\usepackage[T1]{fontenc}
\usepackage[english]{babel}
\usepackage{amsmath, amssymb, amsthm}
\usepackage{mathrsfs}
\usepackage{geometry}
\usepackage{graphicx}
\usepackage{fancyhdr} 
\usepackage{tikz}
\usetikzlibrary{decorations.markings,arrows.meta}
\usepackage{amscd}
\usepackage{epsf}
\usepackage{booktabs}
\usepackage{array}
\usepackage{multirow}
\usepackage{color}
\usepackage[symbol]{footmisc}
\usepackage{lipsum}
\usepackage{latexsym}
\usepackage{verbatim}
\usepackage{bbm}
\usepackage{setspace}
\usepackage[backref=page]{hyperref}
\renewcommand*{\backref}[1]{}
\renewcommand*{\backrefalt}[4]{%
	\ifcase #1 (Not cited)%
	\or        (Cited on page~#2)%
	\else      (Cited on pages~#2)%
	\fi}
\usepackage{cleveref}
\usepackage{etoolbox}
\usepackage[normalem]{ulem} 
\usepackage{thmtools} 
\usepackage{enumitem}
\usepackage{anyfontsize}

\setlist[enumerate,1]{label=\textbf{\arabic*.}, ref=\arabic*, leftmargin=*, itemsep=0.5ex, topsep=0.5ex}

\let\OLDthebibliography\thebibliography
\renewcommand\thebibliography[1]{
  \OLDthebibliography{#1}
  \setlength{\parskip}{2pt}
  \setlength{\itemsep}{0pt plus 0.3ex}
}

\makeatletter
\renewenvironment{proof}[1][\proofname]{%
  \par\pushQED{\qed}%
  \normalfont\topsep6\p@\@plus6\p@\relax
  \trivlist
  \item[\hskip\labelsep
        \textbf{\textup{#1}}\@addpunct{.}]\ignorespaces
}{%
  \popQED\endtrivlist\@endpefalse
}
\makeatother
\newcommand{\mysubjclass}{\textup{2020} \textbf{Mathematics Subject Classification:} 53C15, 53C25, 53C29, 53C55.}
\newcommand{\mykeywords}{\textbf{Keywords:} Almost contact metric structures;
metric connections with skew-symmetric torsion;
Sasaki with torsion;
$\nabla$-Einstein manifolds;
co-K\"ahler-like manifolds; $U(n)$-flows.
}

\fancypagestyle{firstpagefooter}{
  \fancyhf{} 
  \fancyfoot[L]{\scriptsize
    \mysubjclass \\
    \mykeywords \\
    }
}

\hypersetup{
    colorlinks=true,
    linkcolor=blue,    
    citecolor=red,     
    urlcolor=blue,     
    pdftitle={Sasaki with torsion manifolds and string backgrounds},
    pdfauthor={Beatrice Brienza, Anna Fino, Udhav Fowdar, Gueo Grantcharov}
}

\newtheorem{thm}{Theorem}[section]
\newtheorem{prop}[thm]{Proposition}
\newtheorem{lem}[thm]{Lemma}
\newtheorem{cor}[thm]{Corollary}

\newtheorem{defn}[thm]{Definition}

\theoremstyle{definition}
\newtheorem{rmk}[thm]{Remark}
\newtheorem{ex}[thm]{Example}

\title{titolo}

\newcommand{\g}{\mathfrak{g}}
\newcommand{\Ric}{\mathrm{Ric}}
\newcommand{\h}{\mathfrak{h}}

\newcommand{\R}{\mathbb{R}}
\newcommand{\C}{\mathbb{C}}
\newcommand{\SU}{\mathrm{SU}}
\newcommand{\SO}{\mathrm{SO}}

\newcommand{\w}{\wedge}
\newcommand{\vol}{{\mathrm{vol}}}

\renewcommand{\epsilon}{\varepsilon}
\renewcommand{\phi}{\varphi}

\usetikzlibrary{shapes.misc}
\tikzset{cross/.style={cross out, draw, 
         minimum size=2*(#1-\pgflinewidth), 
         inner sep=0pt, outer sep=0pt},
         cross/.default={3.3pt}}

\numberwithin{equation}{section}

\title{\textbf{Sasaki with torsion manifolds and string backgrounds}}
\author{Beatrice Brienza, Anna Fino, Udhav Fowdar, Gueo Grantcharov}
\date{}

\begin{document}

\maketitle

\begin{abstract}
 Motivated by the analogy with the Bismut connection in Hermitian geometry, we study Sasaki with torsion manifolds. In particular, we characterize co-K\"ahler-like and flat Sasaki with torsion manifolds, and we introduce the notion of a $\nabla$-Einstein manifold as the odd-dimensional analogue of the Bismut Hermite-Einstein condition. We provide non-compact examples and we study compact $\nabla$-Einstein manifold in dimension $5$ and $7$. We also develop a general framework for geometric flows of almost contact metric structures. In particular, we derive a flow for Sasaki with torsion structures that preserves the strong condition, that is, the closure of the torsion. Furthermore we prove that such flow is gauge-equivalent to the generalized Ricci flow and it is gauge-equivalent to the pluriclosed flow, after performing a trivial product with $S^1$.

\end{abstract}

\thispagestyle{firstpagefooter}
\tableofcontents

\section{Introduction}

Metric connections with totally skew-symmetric torsion have been extensively studied in the context of (almost) Hermitian geometry, where the Bismut connection plays a central role and a rich theory has been developed, both from the differential-geometric and physical viewpoints (see \cite{Agricola2006,FiGr,GFStr,FI, IP2} and the references therein). The corresponding theory in the setting of almost contact metric geometry is considerably less developed, despite its natural appearance in odd-dimensional analogues of Hermitian structures and its relevance in string theory. In dimension five, the solutions of the type II string equations are indeed related to suitable almost contact metric structures \cite{FI2}. However, not every such geometric structure is admissible. One requires the existence of a connection $\nabla$ with totally skew-symmetric torsion that preserves the almost contact metric structure, together with a $\nabla$-parallel spinor field satisfying the so-called \emph{Killing spinor equations}. A notable case occurs when the holonomy of $\nabla$ reduces to $\mathrm{SU}(2)$, which is equivalent to the vanishing of an analogue of the Bismut--Ricci form (see Section~5). Such a geometry thus provides an example of a string background, here defined as metric connection with (closed) skew torsion and reduced holonomy.

Friedrich and Ivanov showed in  \cite{FI}  that a normal almost contact metric manifold $ (M,\phi,\xi,\eta,g) $ admits a \emph{canonical} metric connection $ \nabla $ with totally skew-symmetric torsion preserving the structure, i.e.  
\[
\nabla \phi = 0, \qquad \nabla \eta = 0,
\]  
if and only if the Reeb vector field $ \xi $ is Killing. In this case, the connection is uniquely determined and can be written as  
\[
g(\nabla_X Y, Z)
= g(\nabla^g_X Y, Z)
+ \frac{1}{2}\bigl(\eta \wedge d\eta + d^\phi F\bigr)(X,Y,Z),
\]  
where $d^\phi F(X,Y,Z) = -dF(\phi X,\phi Y,\phi Z)$. 

This construction provides a natural notion of \emph{Sasaki with torsion geometry}, i.e. the odd-dimensional counterpart of K\"ahler with torsion geometry \cite{IP2}. Examples are abundant: for instance, Sasaki and quasi-Sasaki manifolds, and, more general, $S^1$-bundles over Hermitian manifolds \cite{CoMa}.

As a matter of fact, the connection $\nabla$ is defined under the sole assumption that the Nijenhuis tensor is skew-symmetric \cite{FI}. In dimension five, however, the skew-symmetry of the Nijenhuis tensor is equivalent to its vanishing \cite{CM}; since our primary interest lies in such dimension, we may therefore impose the \emph{normality condition} without loss of generality.
\medskip

The parallel between Sasaki with torsion geometry and K\"ahler with torsion geometry is particularly striking. Therefore, it seems natural to investigate the properties of $\nabla$ that are known to hold for the Bismut connection. For instance, following the notion introduced by Angella, Otal, Ugarte, and Villacampa in the Hermitian case \cite{AOUV}, we call a Sasaki with torsion manifold $(M,\phi,\xi,g,\nabla)$ \emph{co-K\"ahler-like} if $\nabla$ satisfies the first Bianchi identity. In Section 3 (Theorem ~\ref{thm:CKL}) we completely characterize this condition:

\begin{thm} \label{thm:CKL intro}
Let $(M,\phi,\xi,g,\nabla)$ be a Sasaki with torsion manifold. Then $(\phi,\xi,g)$ is co--K\"ahler-like if and only if the torsion $3$-form
\[
H = \eta \wedge d\eta + d^{\phi}F
\]
is both closed and parallel.
\end{thm}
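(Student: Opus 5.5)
The proof rests on the general first Bianchi identity for a metric connection with totally skew-symmetric torsion $H$. For such a connection,
\[
\mathfrak{S}_{X,Y,Z} R(X,Y,Z,W) = dH(X,Y,Z,W) - \sigma_H(X,Y,Z,W) + (\nabla_W H)(X,Y,Z),
\]
where $\sigma_H = \tfrac12 \sum_i (e_i \lrcorner H)\wedge(e_i\lrcorner H)$. We also use the companion identity $dH = 2\sigma_H + \mathfrak{S}_{X,Y,Z}(\nabla_X H)(Y,Z,W) - (\nabla_W H)(X,Y,Z)$, equivalently $dH = \mathfrak{A}(\nabla H) + 2\sigma_H$, with $\mathfrak{A}$ the full antisymmetrization. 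The ``if'' direction is then almost immediate. If $dH=0$ and $\nabla H=0$, the second identity gives $\sigma_H=0$, so the right-hand side of the Bianchi identity vanishes and $\nabla$ is co-K\"ahler-like.

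For the converse, assume the Bianchi sum of $R$ vanishes, so that
\begin{equation*}
dH - \sigma_H + \nabla_W H = 0 .
\end{equation*}
The first two terms are $4$-forms. Antisymmetrizing in all four slots and comparing with $dH = \mathfrak{A}(\nabla H)+2\sigma_H$ shows that $\nabla H$ is itself a $4$-form, i.e. $(\nabla_W H)(X,Y,Z)$ is totally skew in $W,X,Y,Z$. This is the Hermitian-case mechanism of Zhao--Zheng and Angella et al. The problem thus reduces to proving that this $4$-form $\nabla H$ vanishes.

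To get this vanishing I would bring in the structure: $\nabla\phi=0$, $\nabla\eta=0$, $\nabla\xi=0$, and $R(X,Y)$ commutes with $\phi$ and kills $\xi$. Since $R(X,Y)\xi=0$, the Bianchi identity with $W=\xi$ gives $(dH-\sigma_H+\nabla H)(X,Y,Z,\xi)=0$. Next I would decompose $H=\eta\wedge d\eta + d^\phi F$, with $\nabla\eta=0$, so that $\nabla H = \eta\wedge \nabla d\eta + \nabla d^\phi F$. Here $\xi\lrcorner H = d\eta$, which is $\nabla$-parallel once we know $\nabla_\xi$-type components vanish. Then $\xi\lrcorner \nabla_W H = \nabla_W(\xi\lrcorner H)=\nabla_W d\eta$, and total skew-symmetry of $\nabla H$ forces $\nabla_\xi H = 0$ and relates $\nabla d\eta$ to the horizontal part of $\nabla H$.

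For the horizontal part, $\phi$-invariance of $R$ combined with the Bianchi identity gives the standard consequence that $R$ is of type $(1,1)$ in both pairs and has pair symmetry. Pair symmetry $R(X,Y,Z,W)-R(Z,W,X,Y)$ is controlled by $dH$ via $\tfrac12 dH$. A Kähler-like argument, comparing $R(X,Y,Z,W)$ with $R(\phi X,\phi Y,Z,W)$, should force the $(3,1)+(1,3)$ and $(4,0)$ components of the $4$-form $\nabla H$ to vanish. The $(2,2)$ component has to be handled separately: $d^\phi F$ has type $(3,0)+(0,3)$ on $\ker\eta$, and $\nabla$ preserves types, so $\nabla d^\phi F$ has no $(2,2)$ part. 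The remaining $(2,2)$ part comes from $\eta\wedge\nabla d\eta$ and must be killed using $\nabla_\xi H=0$. This type bookkeeping, showing that every type component of the $4$-form $\nabla H$ vanishes, is the step I expect to be the main obstacle.

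Once $\nabla H=0$, the Bianchi identity reduces to $dH=\sigma_H$. Combining this with $dH=2\sigma_H$, which follows from the companion identity when $\nabla H=0$, gives $\sigma_H=0$ and hence $dH=0$.
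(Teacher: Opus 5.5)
\paragraph{Where your proposal stands.} Your ``if'' direction is fine. So is the observation that under the Bianchi identity $\nabla H$ is a $4$-form. You should, however, push that computation one step further, as the paper does via \cite{IS}. Write $P(X,Y,Z,W)=(\nabla_XH)(Y,Z,W)$. The Bianchi identity gives $P=dH-\sigma_H$, and the companion identity gives $dH=4P+2\sigma_H$. Hence $dH=\tfrac23\sigma_H=-2\nabla H$.

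This relation is what controls the $\xi$-direction. Since $\iota_\xi d\eta=0$ and $\iota_\xi d^\phi F=\mathcal L_\xi d^\phi F=0$, one has $\iota_\xi dH=0$. Therefore $\nabla d\eta=\iota_\xi\nabla H=-\tfrac12\iota_\xi dH=0$. Your claim that ``total skew-symmetry of $\nabla H$ forces $\nabla_\xi H=0$'' does not justify this. Skew-symmetry only gives $(\nabla_\xi H)(X,Y,Z)=-(\nabla_Xd\eta)(Y,Z)$, which identifies the two unknowns without making either vanish.

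\paragraph{The genuine gap: the horizontal part.} Once $\nabla d\eta=0$, the form $\nabla H=\nabla d^\phi F=-\tfrac12 dH$ is a horizontal $4$-form. On $\ker\eta$ one has $dH=d\eta\wedge d\eta+dd^\phi F$, which is of type $(2,2)$: $d\eta$ is of type $(1,1)$ and $dd^\phi F$ is the transverse $dd^cF$. So the $(4,0)$ and $(3,1)$ components you plan to kill by a K\"ahler-like comparison vanish for free. The entire content of the theorem is the $(2,2)$ component, i.e.\ proving $d\eta\wedge d\eta+dd^\phi F=0$.

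Your argument for that component rests on a false type claim. By normality the transverse structure is Hermitian, and $d^\phi F$ is its Bismut torsion, which is of type $(2,1)+(1,2)$, not $(3,0)+(0,3)$. Consequently $\nabla d^\phi F$ can perfectly well have a $(2,2)$ part, and type considerations alone can never force it to vanish. Nor can that part ``come from $\eta\wedge\nabla d\eta$'', which is zero on horizontal vectors.

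Killing this $(2,2)$ component is the odd-dimensional analogue of the conjecture of \cite{AOUV}, which Zhao--Zheng proved by a genuinely nontrivial argument \cite{ZZ}. Here the situation is harder still: the transverse Bismut connection does not satisfy the Bianchi identity, but only $\mathfrak S R^B=-\tfrac12 d\eta\wedge d\eta$.

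\paragraph{What the paper does instead.} It reduces to $\nabla^Bd^\phi F=0$ for the transverse Hermitian structure and applies the curvature characterization of Bismut-parallel torsion from \cite{ZZ2}. The first two conditions follow from pair symmetry of $R^B$, via \cite{Iv}. The Ricci-type conditions take real work:
\begin{itemize}
\item use the modified Bianchi identity to get $\mathrm{Ric}(Q)_{x\bar y}=-\tfrac12\sum_i(d\eta\wedge d\eta)(x,\bar y,e_i,\bar e_i)$;
\item use $\nabla d\eta=0$ to conclude this is parallel;
\item show the Lee field lies in $\ker d\eta$, by means of the $1$-form $\omega^\nabla=c\,\eta+\phi\theta$, whose $\nabla$-derivative is a $2$-form.
\end{itemize}
Nothing in your proposal supplies a substitute for this step.
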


This result will be used in Section~4, where we study the flatness condition for the connection $\nabla$ and establish the Sasaki analogue of the Bismut flat case investigated by Wang, Yang, and Zheng \cite{WYZ}. Theorem~\ref{thm:flat} shows that for $M$ compact, $\nabla$ is flat if and only if, up to a finite cover, $M$ is a quotient $M = G / \mathbb{Z}^k$ of a simply connected odd-dimensional Lie group $G$ equipped with a left-invariant Sasaki with torsion structure whose metric is bi-invariant.

In Section~5 we investigate the curvature $R^\nabla$ of Sasaki with torsion manifolds, focusing on \emph{$\nabla$-Einstein} manifolds. These are defined as Sasaki with torsion manifolds for which the $\nabla$-Ricci form $\rho^\nabla$ vanishes and the torsion $H$ is closed, the odd-dimensional counterpart of the Bismut-Hermite-Einstein condition \cite{GJS2023}.

Both compact and non-compact cases are considered, with special attention to dimension~$5$. We first construct new non-compact examples of $\nabla$-Einstein structures in dimensions $5$ and $7$. Related examples in dimensions five and seven were pointed out to us by
V.~Apostolov \cite{ApostolovPrivate2026}. Turning to the compact case, we obtain a complete classification of compact $5$-dimensional $\nabla$-Einstein manifolds (Theorem~\ref{thm:5}) and we explore the $7$-dimensional case, providing a full characterization \ref{thm:7dimensional-structure}. The key ingredient is a result from \cite{KS}: any compact $\nabla$-Einstein manifold admits a $\nabla$-parallel vector field
\[
V = \theta^{\sharp} - \operatorname{grad} f,
\]
where $\theta$ is the Lee form of the transverse Hermitian structure and $f$ is a suitably normalized smooth function. The existence of such a vector field is ensured by the following two facts: \cite{FI} (see also \cite{KS}) establishes that $(\nabla, g, H, \theta^\sharp)$ is a generalized Ricci soliton; and, under the compactness assumption, \cite{GFStr} guaranties that this soliton is actually a gradient one.

\begin{thm} \label{thm:5 intro}
Let $(M,\phi,\xi,g,\nabla)$ be a compact $5$-dimensional $\nabla$-Hermite–Einstein manifold, and let $f$ be the unique normalized smooth function such that $V = \theta^{\sharp} - \operatorname{grad} f$ is parallel. Then:

\begin{enumerate}
\item If $V \neq 0$, then $\nabla$ is flat (Theorem~\ref{thm:flat}) and the universal cover of $M$ is isometric to $\mathbb{R} \times (\mathbb{R} \times \mathrm{SU}(2))$.

\item If $V = 0$, we distinguish according to the constant $c$ determined by the $\nabla$-Einstein condition:
\begin{enumerate}
    \item When $c = 0$, $M$ is a mapping torus over a compact K\"ahler Ricci-flat $4$-manifold.
    \item When $c \neq 0$:
    \begin{enumerate}
        \item If $f$ is constant (equivalently, $\tilde s^T$ is constant), then $\nabla$ is flat and the universal cover is isomorphic to $\mathrm{SU}(2) \times \mathbb{C}$; the lifted almost contact metric structure induces the standard left-invariant Sasaki structure on the $\mathrm{SU}(2)$ factor.
        \item If $f$ is non‑constant, then $\nabla$ is non‑flat and $(M,\phi,\xi,g,\nabla)$ is locally an $S^1$-bundle over a $4$-dimensional K\"ahler manifold with strictly positive non--constant scalar curvature $\tilde s^T$ satisfying
        \[
        \Box \tilde s^T  = \frac{(\tilde s^T)^2}{2} - |\widetilde{\mathrm{Ric}}|^2.
        \]
    \end{enumerate}
\end{enumerate}
\end{enumerate}
\end{thm}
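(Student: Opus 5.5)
The proof splits according to whether the parallel vector field $V=\theta^\sharp-\operatorname{grad} f$ vanishes. Its existence is guaranteed by the generalized Ricci soliton structure $(\nabla,g,H,\theta^\sharp)$ \cite{FI,KS}, which compactness makes a gradient soliton \cite{GFStr}. Throughout I will use the $\nabla$-Einstein condition $\rho^\nabla=0$, $dH=0$. I will also use that the holonomy of $\nabla$ lies in $\mathrm{U}(2)\subset\mathrm{SO}(4)$ acting on the transverse distribution $\xi^\perp=\ker\eta$, with $\xi$ itself $\nabla$-parallel. Here $c$ is the constant with $d\eta$ (equivalently, the transverse part of $H$) determined by the Einstein condition; I expect it to be the constant relating $\rho^T$ to $d\eta$.

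\emph{Case $V\neq 0$.} Since $\nabla V=0$ and $\nabla\xi=0$, the parallel vector fields span a distribution of rank at least $2$. I first want to show $V$ is not proportional to $\xi$, or handle that subcase separately. The transverse part $V^T$ is then parallel and nonzero, and so is $\phi V^T$. Hence the transverse holonomy reduces from $\mathrm{U}(2)$ to $\mathrm{U}(1)$ acting on the $\phi$-invariant complement $\langle V^T,\phi V^T\rangle^\perp$. Its curvature is $\rho^\nabla$ restricted there, which vanishes, so the holonomy is trivial and $\nabla$ is flat. I then invoke Theorem~\ref{thm:flat}: up to a finite cover, $M$ is a compact quotient of a $5$-dimensional Lie group with bi-invariant metric and closed parallel torsion. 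Among $\mathbb{R}^5$, $\mathbb{R}^2\times\mathrm{SU}(2)$ and similar groups, the existence of a normal almost contact structure with $V\ne0$ and nonzero $H$ should single out $\mathbb{R}\times(\mathbb{R}\times\mathrm{SU}(2))$. I would also rule out the abelian case, or absorb it as the degenerate one.

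\emph{Case $V=0$.} Now $\theta=df$, so the transverse Hermitian structure is locally conformally Kähler with exact Lee form, that is, globally conformally Kähler. Writing $\tilde g^T=e^{-f}g^T$, this transverse metric is Kähler. The Einstein condition becomes an equation for the transverse Kähler metric involving $d\eta$, $f$ and $c$.

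If $c=0$, I expect $d\eta=0$, so $\eta$ is closed and $\xi$ is parallel for Levi-Civita as well. By Tischler-type arguments $M$ fibers over $S^1$, and the fiber is a compact Kähler manifold. The vanishing of $\rho^\nabla$ then gives that this fiber is Ricci-flat, so $M$ is a mapping torus as claimed.

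If $c\neq0$, the Einstein equation should read $\widetilde{\rho}^T=\lambda\, e^{?f}\,d\eta$ together with $\tilde s^T$ expressed as a function of $f$. If $f$ is constant, the transverse structure is Kähler with $\rho^T$ proportional to $d\eta$ and $H$ parallel. In that case I would apply Theorem~\ref{thm:CKL intro}: $H$ is closed and parallel, and one further computes $R^\nabla=0$, so Theorem~\ref{thm:flat} gives $\mathrm{SU}(2)\times\mathbb{C}$. If $f$ is nonconstant, flatness would force $f$ constant by the flat classification, so $\nabla$ is non-flat. Local quotients along $\xi$ give a Kähler $4$-manifold. I would derive the PDE $\Box\tilde s^T=\tfrac12(\tilde s^T)^2-|\widetilde{\mathrm{Ric}}|^2$ by taking the trace and divergence of the transverse Einstein equation and using the contracted Bianchi identity. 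Positivity of $\tilde s^T$ would then follow from a maximum principle on compact $M$.

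The main obstacle is the $c\neq0$ computation: correctly expressing $\rho^\nabla$ in terms of the conformal Kähler data and $d\eta$, and deriving the exact PDE and the sign of $\tilde s^T$.
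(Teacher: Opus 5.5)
Your case split, and your argument that $V\neq0$ forces flatness, coincide with the paper's: the parallel fields $V,\phi V$ cut the holonomy down to a $\mathrm U(1)$ whose curvature is $\rho^\nabla=0$. Beyond that, the decisive steps are missing, and in places the mechanism you propose would not work.

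The $c\neq0$ analysis rests on an identity you do not find. In real dimension $4$ the Bismut Ricci form is conformally invariant. So for the K\"ahler metric $\tilde g^T=e^{-f}g^T$ one has $\tilde\rho^T=\rho^B=c\,d\eta$ with no conformal factor, and tracing with respect to $\tilde g^T$ gives $\tilde s^T=2c^2e^f$. This yields positivity at once, and also the equivalence ``$f$ constant iff $\tilde s^T$ constant''. A maximum principle on the Box equation cannot give positivity, since $\tfrac12(\tilde s^T)^2-|\widetilde{\Ric}|^2$ has no sign. Nor does the Box equation come from traces, divergences and the contracted Bianchi identity applied to $\tilde\rho^T=c\,d\eta$. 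These carry no information: the converse part of Theorem~\ref{thm:caracterization} shows that once $\tilde s^T>0$ and $d\eta=c^{-1}\tilde\rho^T$, the condition $\rho^\nabla=0$ is automatic. The PDE is exactly the strong condition. Substituting $d\eta=c^{-1}\tilde\rho^T$ and $F=\frac{\tilde s^T}{2c^2}\tilde\omega$ into $d\eta\wedge d\eta+dd^\phi F=0$ gives $\tilde\rho^T\wedge\tilde\rho^T+\tfrac12\,dd^\phi\tilde s^T\wedge\tilde\omega=0$, whose Hodge dual is the Box equation.

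In the subcase $f$ constant you assert that $H$ is parallel, but this is the heart of the matter. There $\theta=0$ and $H=\eta\wedge d\eta$, so $\nabla H=0$ amounts to the transverse Ricci form $c\,d\eta$ being parallel, which is not automatic. The paper derives it from $dH=0$, which now reads $d\eta\wedge d\eta=0$:
\begin{itemize}
\item writing $d\eta=\tfrac c2F+\gamma$ with $\gamma$ anti-self-dual forces $|\gamma|^2=c^2$;
\item so $d\eta$ has a $2$-dimensional horizontal kernel, and the transverse Ricci tensor has two distinct constant eigenvalues, one of them zero;
\item a transverse version of the Apostolov--Dr\u{a}ghici--Moroianu splitting theorem (as used in \cite{ABLS}) then makes the transverse Ricci tensor parallel, hence $\nabla d\eta=0$;
\item this gives a Levi-Civita-parallel distribution $\ker d\eta\cap\ker\eta$, and by de Rham $\widetilde M\cong N^3\times\C$ with $N^3$ a $3$-dimensional $\nabla$-Einstein, hence flat, factor.
\end{itemize}
Note that Theorem~\ref{thm:CKL} alone only yields the Bianchi identity, not $R^\nabla=0$.

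For $c=0$ you only ``expect'' $d\eta=0$. The argument is that $c=0$ makes $d\eta$ primitive, hence anti-self-dual on $\ker\eta$. So $|d\eta|^2\,\mathrm{vol}$ is a positive multiple of $-\eta\wedge d\eta\wedge d\eta=\eta\wedge dd^\phi F=-d(\eta\wedge d^\phi F)$, which integrates to zero. You must also justify that the fiber is K\"ahler, not merely conformally K\"ahler. For example, $dd^\phi F=0$ makes the basic function $e^f$ harmonic, hence constant; the paper instead rules out the Bismut-flat $\SU(2)\times S^1$ alternative using $\theta=df$.

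Finally, for $V\neq0$ the abelian cover $\R^5$ must be excluded, not absorbed as a degenerate case. Flatness makes $\theta$, hence $\operatorname{grad}f$, parallel. So $f$ is constant on compact $M$ and $V=\theta^\sharp$, which vanishes on the abelian model.
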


Notably, the emerging transverse $4$-dimensional geometry coincides precisely with that of six-dimensional Bismut-Hermite--Einstein manifolds \cite{ABLS}. Observe, moreover, that while in \cite{ABLS} the vanishing of $V$ enforces  K\"ahlerianity, in the present setting it leads to a significantly richer geometry.\par
\medskip
In Section~6 we turn to geometric flows of almost contact metric structures. To the best of our knowledge, this subject has received comparatively little attention. We begin by developing a general framework for such flows, viewing an almost contact metric structure as a $\mathrm{U}(n)'$-structure and decomposing its infinitesimal deformation space into irreducible $\mathrm{U}(n)'$-modules. Here $U(n)'$ is viewed as $\{1\}\times U(n)\subset \SO(2n+1)$,
acting trivially on the Reeb direction. This allows us to determine explicitly which components of a deformation act on the metric, the Reeb vector field, the contact form, and the endomorphism $\phi$, and hence to construct flows preserving selected parts of the structure.

Our main purpose is to introduce a natural evolution equation for Sasaki with torsion structures. The construction is motivated by the pluriclosed flow in Hermitian geometry and by its gauge equivalence with the generalized Ricci flow. Two issues arise in the odd-dimensional setting. First, although a natural geometric flow preserves the symmetry generated by an initially Killing Reeb vector field, it need not preserve its length. Second, an arbitrary flow of almost contact metric structures does not preserve normality. We overcome the first difficulty by considering a suitable normalization of the Ricci flow, which preserves the unit length of the fixed Reeb vector field. To address the second one, we keep fixed the complex structure induced by $\phi$ on the quotient bundle $TM/\langle\xi\rangle$ and evolve $\eta$ within the class of normal almost contact structures.

A key feature of this evolution is that it preserves the closedness of the
torsion, namely the condition $dH=0$. When the initial Sasaki with torsion
structure is strong, we prove that the induced evolution is gauge-equivalent
to the generalized Ricci flow. Moreover, after passing to the trivial product
$M\times S^1$, the corresponding Hermitian evolution is, up to a natural
gauge transformation along the $S^1$-factor, precisely the pluriclosed flow.
These results show that the parallel between Sasaki with torsion geometry
and Hermitian geometry persists at the level of geometric flows. 

More precisely, setting
\[
c=\Lambda_Fd\eta,
\]
we consider the evolution equations
\[
\begin{aligned}
\partial_t\eta
&=
-d^{\phi_0}c,\\
\partial_tg^T
&=
-2\Ric(g^T)
+2d\eta^2
+\frac12(d^\phi F)^2
-\mathcal L_{\theta^\sharp}g^T,
\end{aligned}
\]
where $\theta$ is the Lee form of the transverse Hermitian structure. The first equation preserves the normality condition, whereas the second is tangent to the space of transverse Hermitian metrics. The metric evolves by the Ricci flow to highest order, while the equation for $\eta$ has the transverse Hodge Laplacian as its principal part. After an appropriate DeTurck gauge fixing, the resulting system is transversely parabolic.

The main results concerning this flow can be summarized as follows.

\begin{thm}\label{thm:flow intro}
Let $(M^{2n+1},\phi_0,\eta_0,\xi,g_0)$ be a compact Sasaki with torsion manifold. Then the following statements hold.

\begin{enumerate}
\item The evolution equations above admit a unique solution
\[
(\phi_t,\eta_t,\xi,g_t)
\]
for $t\in[0,\varepsilon)$, for some $\varepsilon>0$, which remains Sasaki with torsion.

\item If the initial structure is strong, then the strong condition is preserved along the flow. 

\item If the initial structure is strong, the induced pair
\[
g_t=\eta_t\otimes\eta_t+g_t^T,
\qquad
H_t=\eta_t\wedge d\eta_t+d^{\phi_t}F_t,
\]
satisfies the gauge-fixed generalized Ricci flow
\[
\begin{aligned}
\partial_tg
&=
-2\Ric(g)+\frac12H^2-\mathcal L_{\theta^\sharp}g,\\
\partial_tH
&=
\Delta H-\mathcal L_{\theta^\sharp}H.
\end{aligned}
\]
\end{enumerate}
\end{thm}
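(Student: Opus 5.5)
The proof treats the flow as a flow of transverse data on the fixed foliation generated by $\xi$, and then lifts it to $M\times S^1$ so that known Hermitian results apply. Throughout, $\xi$ and the transverse complex structure $J$ induced by $\phi_0$ on $TM/\langle\xi\rangle$ stay fixed. The unknowns are the contact form $\eta_t$, constrained by $\eta_t(\xi)=1$ and $\mathcal L_\xi\eta_t=0$, and a $\xi$-basic transverse Hermitian metric $g^T_t$. Normality for this $J$ amounts to $d\eta_t$ being $\xi$-basic of type $(1,1)$. Since $c=\Lambda_F d\eta$ is basic, $d^{\phi_0}c=Jdc$ is a basic $1$-form with no $\eta$-component. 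Hence $\partial_t\eta$ preserves $\eta(\xi)=1$. Moreover $d(d^{\phi_0}c)=dd^cc$ is basic of type $(1,1)$, so normality is preserved and $\xi$ remains Killing. Similarly, every term on the right of the $g^T$ equation is basic and $J$-invariant: the $\mathrm{U}(n)'$-decomposition of the deformation space shows that $\Ric(g^T)$ is replaced by its $J$-invariant part up to the torsion corrections. Therefore the solution remains Sasaki with torsion as long as it exists, with $\phi_t$ reconstructed from $\eta_t$, $\xi$ and $J$.

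For short-time existence and uniqueness (item 1), I would argue transversely. The $\eta$ equation is $\partial_t\eta=-Jd\Lambda_F d\eta$, and on basic forms $-Jd\Lambda d$ restricted to the closed part agrees, via the Kähler-type identity $[\Lambda,d]=-d^{c*}$ plus torsion terms, with $-\Delta^T$ modulo lower order. The metric equation is Ricci flow to top order. Applying the DeTurck trick with a basic background connection and adding a $\xi$-invariant diffeomorphism term gives a strictly parabolic system on the leaf space, in the sense of transversely elliptic operators on a Riemannian foliation. Standard parabolic theory for basic forms (El Kacimi-type Hodge theory on compact manifolds with a Killing foliation) then gives existence, and undoing the gauge gives uniqueness, as in Ricci flow.

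For items 2 and 3, I would pass to the product $N=M\times S^1$ with its Hermitian structure $I$, defined by $I\xi=\partial_s$ and $I=J$ transversely. The fundamental form is $\omega=F+\eta\wedge ds$, and its Bismut torsion is $H_N=d^c\omega$, which pulls back from $H=\eta\wedge d\eta+d^\phi F$. Because $I$ depends on $\eta$, I would pull back by the $S^1$-gauge that fixes $I$; the flow of $\eta$ then contributes exactly $\partial_t\omega=-dc\wedge ds+\dots$. Next, I would compute the Bismut–Ricci form $\rho_B$ of $(N,I,\omega)$ in terms of transverse data, namely the transverse Chern–Ricci form together with $dd^c$-type terms in $c$ and the Lee form, and check that the system is $\partial_t\omega=-(\rho_B)^{1,1}$ up to $\mathcal L_{\theta^\sharp}$ and the $S^1$-gauge. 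Item 2 then follows, because pluriclosed flow preserves $dd^c\omega=0$, which is equivalent to $dH=0$. Item 3 follows from the Streets–Tian gauge equivalence of pluriclosed flow with generalized Ricci flow, descended along the $S^1$ factor. The $\frac12 H^2$ term splits into the $d\eta^2$ and $\frac12 (d^\phi F)^2$ terms plus cross terms, and these cross terms are absorbed by the normalization that preserves $|\xi|=1$.

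I expect the main obstacle to be this identification with $-\rho_B^{1,1}$. Ordering and sign conventions for $d^\phi$, the Lee form $\theta$, and the exact form of the gauge vector field must match exactly, and the $\eta$–$\eta$ components of $\Ric(g)$ and of $H^2$ must cancel precisely. I would first verify the computation on the Heisenberg group and on $\mathrm{SU}(2)\times\R$ to fix constants, and then carry it out in general.
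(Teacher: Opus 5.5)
Your item~1 follows the paper's strategy. Item~2, however, has a genuine gap: the argument is circular, and the problem is not one of constants or conventions.

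By the Bismut--Ricci identity of Lemma~\ref{lem: 1,1}, the right-hand side of \eqref{eqn:flow1ii} equals $-2\bigl((\rho^B)^{1,1}-c\,d\eta\bigr)(\phi\cdot,\cdot)+\tfrac12\lambda(\phi\cdot,\cdot)$, where $\lambda$ is a constant multiple of $\Lambda_F dH$. Hence
\[
\partial_t F=-2\bigl((\rho^B)^{1,1}-c\,d\eta\bigr)+\tfrac12\lambda .
\]
On $M\times S^1$, after the $c\,\partial_\tau$ gauge, the evolution is therefore $-2(\widehat\rho^B)^{1,1}+\tfrac12\lambda$. No extra $\mathcal L_{\theta^\sharp}$ correction appears at the level of $\omega$, since that term is already built into \eqref{eqn:flow1ii}. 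This coincides with pluriclosed flow only where $dH=0$. So when you deduce preservation of $dH=0$ from the fact that pluriclosed flow preserves pluriclosedness, you use an identification that already presupposes the conclusion along the whole solution.

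The same lemma is also what shows that \eqref{eqn:flow1ii} is tangent to transverse Hermitian metrics. The terms $\Ric(g^T)$, $(d^\phi F)^2$ and $\mathcal L_{\theta^\sharp}g^T$ are not separately $J$-invariant; only their combination is. That is a curvature identity, not a consequence of the $\mathrm{U}(n)'$-decomposition, so your justification of this point in item~1 should be replaced by it.

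To close the gap you need an argument that sees the $\lambda$-term. The paper's argument is direct:
\begin{itemize}
\item Since $d\rho^B=0$, one has $dd^\phi(\rho^B)^{1,1}=0$.
\item The term $dd^\phi(2c\,d\eta)=2\,dd^\phi c\wedge d\eta$ cancels $\partial_t(d\eta\wedge d\eta)=2\,d(\partial_t\eta)\wedge d\eta=-2\,dd^\phi c\wedge d\eta$.
\item Hence $\Psi=dH$ satisfies $\partial_t\Psi=\tfrac12\,dd^\phi\lambda$. This is a homogeneous linear equation in $\Psi$, transversely parabolic because its principal part is the transverse Laplacian by Demailly-type identities.
\item Since $\Psi_0=0$, uniqueness forces $\Psi_t=0$.
\end{itemize}
Alternatively, you could keep your route as follows. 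Run pluriclosed flow from $\widehat\omega_0$ on the fixed $(M\times S^1,\widehat J_0)$. Prove that the solution stays of the form $\chi_t^*(d\tau\wedge\eta_t+F_t)$ for Sasaki-with-torsion data on $M$. Then invoke uniqueness in item~1. That ansatz-preservation step is real work and is absent from your proposal.

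Once item~2 is available, your derivation of item~3 from the Streets--Tian equivalence is a legitimate alternative to the paper's proof. The paper instead checks the horizontal, vertical and mixed components of $\partial_tg$, and compares $-d\delta H-\mathcal L_{\theta^\sharp}H$ with $\partial_tH$, directly on $M$ using \cite[Prop.~9.1]{FI}. In your route, with the paper's normalizations the Lee form of $d\tau\wedge\eta+F$ is $\theta-c\,d\tau$. Undoing the gauge $\chi_t$ therefore turns $\widehat\theta^\sharp$ into $\widehat\theta^\sharp+c\,\partial_\tau=\theta^\sharp$, exactly as required.

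There are, however, no cross terms for the normalization to absorb. On horizontal vectors $H^2=2\,d\eta^2+(d^\phi F)^2$, and $-2\Ric(\xi,\xi)+\tfrac12H^2(\xi,\xi)=0$ identically.
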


The short-time existence and uniqueness in the first statement correspond to Theorem~\ref{thm:ST-flow}. The preservation of the strong condition is Proposition~\ref{prop:SST-preserved}; its proof follows by showing that $\Psi=dH$ satisfies a homogeneous linear transversely parabolic equation. The final statement identifies the flow, on the strong locus, as natural odd-dimensional counterpart of the pluriclosed flow \cite{StreetsTian, ST2}. This analogy is particularly striking after the following observation (see Theorem \ref{thm:SWT-pluriclosed}): on the strong locus, after passing to the product $M\times S^1$ and applying a natural gauge transformation along the circle factor, the induced Hermitian evolution coincides with the pluriclosed flow with respect to the initial complex structure. The following theorem hence holds: 
\begin{thm}
Let $\Phi_t=(\varphi_t,\xi,\eta_t,g_t)$ be a solution of the
Sasaki-with-torsion flow starting from a compact strong
Sasaki-with-torsion manifold $(M, \Phi_0)$, and let
$(\widehat J_t,\widehat\omega_t)$ be the induced Hermitian structure
on $M\times S^1_{\tau}$. Set $
c_t:=\Lambda_{F_t}d\eta_t,$ 
and let $\chi_t$ be the family of diffeomorphisms generated by
$c_t\partial_\tau$, with $\chi_0=\operatorname{Id}$. Then
\[
\chi_t^*\widehat J_t=\widehat J_0
\]
and $\chi_t^*\widehat\omega_t$ is the solution of the pluriclosed flow
on $(M\times S^1,\widehat J_0)$, normalized by
$
\partial_t\omega
=
-2\left(\rho^B_\omega\right)^{1,1},
$
with initial datum $\widehat\omega_0$. In particular, if
$\omega_t^{\mathrm{PC}}$ denotes this solution, then
$
\chi_t^*\widehat\omega_t=\omega_t^{\mathrm{PC}}.
$
\end{thm}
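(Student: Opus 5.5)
We follow the standard argument that a flow of Hermitian structures, corrected by a time-dependent diffeomorphism, solves the pluriclosed flow. The product structure on $M\times S^1_\tau$ is the usual Morimoto-type one:
\[
\widehat J_t X=\varphi_t X+\eta_t(X)\partial_\tau \ (X\in TM),\qquad \widehat J_t\partial_\tau=-\xi,\qquad \widehat\omega_t=F_t+\eta_t\wedge d\tau,
\]
with $F_t=g_t(\cdot,\varphi_t\cdot)$. Normality of $\Phi_t$, which is preserved by the first statement of Theorem~\ref{thm:flow intro}, gives integrability of $\widehat J_t$. The Bismut torsion of $(\widehat J_t,\widehat\omega_t)$ is the pullback of $H_t=\eta_t\wedge d\eta_t+d^{\varphi_t}F_t$. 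Hence the strong condition on $M$, preserved by the second statement, says exactly that $\widehat\omega_t$ is pluriclosed.

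\textbf{Step 1 (gauge for $\widehat J$).} Only $\eta$ moves and the transverse complex structure is fixed, so $\varphi_t$ differs from $\varphi_0$ only through $\eta_t$. One has $\partial_t\eta=-d^{\varphi_0}c_t$, where $d^{\varphi_0}c=-dc\circ\varphi_0$ up to the paper's sign convention. We then compute
\[
\partial_t\widehat J_t=-\mathcal L_{c_t\partial_\tau}\widehat J_t .
\]
This should hold because $\mathcal L_{c\partial_\tau}\widehat J$ only involves $dc$ paired against $\widehat J$. Concretely, $(\mathcal L_{c\partial_\tau}\widehat J)X=-dc(\widehat J X)\partial_\tau+dc(X)\widehat J\partial_\tau$. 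This is a $\widehat J$-anti-invariant tensor that exactly reproduces the variation of the term $\eta_t(X)\partial_\tau$ and of $\xi$-components. Differentiating $\chi_t^*\widehat J_t$ then gives $\chi_t^*(\partial_t\widehat J_t+\mathcal L_{c_t\partial_\tau}\widehat J_t)=0$, hence $\chi_t^*\widehat J_t=\widehat J_0$. We must also check that $\chi_t$ is well defined. This holds because $c_t$ is a function on $M$, so $\chi_t(x,\tau)=(x,\tau+\int_0^t c_s(x)\,ds)$.

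\textbf{Step 2 (the metric).} We need to show that
\[
\partial_t\widehat\omega_t=-2(\rho^B_{\widehat\omega_t})^{1,1}_{\widehat J_t}-\mathcal L_{c_t\partial_\tau}\widehat\omega_t .
\]
Pulling back by $\chi_t$ then gives $\partial_t(\chi_t^*\widehat\omega_t)=-2(\rho^B)^{1,1}$ for the fixed structure $\widehat J_0$, and uniqueness of the pluriclosed flow on the compact manifold $M\times S^1$ yields $\chi_t^*\widehat\omega_t=\omega^{PC}_t$.

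To verify the displayed equation, we use the third statement of Theorem~\ref{thm:flow intro}: $(g,H)$ solves generalized Ricci flow gauged by $\theta^\sharp$. On $M\times S^1$ the product metric $g+d\tau^2$ with the same $H$ also solves it, since Ricci and $H^2$ vanish in the $\tau$ direction. By the Streets–Tian correspondence, the Bismut-Ricci $(1,1)$-part satisfies
\[
-2(\rho^B)^{1,1}(\cdot,\widehat J\cdot)=-2\Ric+\tfrac12 H^2-\mathcal L_{\widehat\theta^\sharp}\widehat g .
\]
Here the Lee form is $\widehat\theta=\theta$ plus a multiple of $c\,d\tau$ coming from the trace of $d\eta$, and it is exactly this extra term that generates the gauge $c\partial_\tau$. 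Comparing with $\partial_t\widehat\omega=\partial_tF+\partial_t\eta\wedge d\tau$ reduces the claim to two identities:
\begin{itemize}
\item the $d\tau$-component, $\partial_t\eta=-d^{\varphi_0}c$, against $(\rho^B)(\cdot,\partial_\tau)$ together with $\mathcal L_{c\partial_\tau}\widehat\omega=dc\wedge\eta$-type terms;
\item the transverse component, which is the $g^T$ equation.
\end{itemize}

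\textbf{Main obstacle.} The hardest part is the bookkeeping in Step 2. We must verify that the Lee form of $\widehat\omega$ on the product decomposes as $\theta$ plus a term proportional to $c\,d\tau$, with the correct constant, and that the $\eta\otimes\eta$ and mixed components of $-2\Ric+\frac12 H^2$ agree with what $\partial_t\eta$ and the gauge produce. I would do this first in an adapted frame $\{\xi,\partial_\tau,e_i,\varphi e_i\}$, using $\rho^B(\xi,\cdot)$ computed via $\rho^\nabla$ from Section~5.
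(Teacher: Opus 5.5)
Your proposal is correct, but Step~2 follows a different route from the paper. Step~1 and the closing argument are the same as in the paper: the paper checks $\partial_t\widehat J_t+\mathcal L_{c_t\partial_\tau}\widehat J_t=0$ by the same bracket computation, then uses naturality of the Bismut Ricci form under $\chi_t$ and uniqueness of the pluriclosed flow. Your explicit formula $\chi_t(x,\tau)=(x,\tau+\int_0^t c_s(x)\,ds)$ is a useful addition.

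\textbf{How the paper does Step~2.} It does not use the generalized Ricci flow statement at all. It reads off
\[
\partial_t\widehat\omega_t=-d\tau\wedge d^{\varphi_t}c_t-2\bigl((\rho^B_t)^{1,1}-c_t\,d\eta_t\bigr)
\]
from the curvature identity for $\Ric(g^T)$ with $\lambda=0$. It then uses the splitting $\widehat\nabla^B=\nabla^{\mathrm{LC}}_{S^1}\oplus\nabla$ to get $\widehat\rho^B=\rho^\nabla=\rho^B-c\,d\eta-dc\wedge\eta$. Its $\widehat J_t$-$(1,1)$ part is $(\rho^B)^{1,1}-c\,d\eta-\tfrac12dc\wedge\eta+\tfrac12d\tau\wedge d^{\varphi}c$, which it matches against $\mathcal L_{c\partial_\tau}\widehat\omega=dc\wedge\eta$.

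\textbf{How your route closes.} You combine the gauge-fixed generalized Ricci flow on $M$ with the pointwise pluriclosed identity $-2(\widehat\rho^B)^{1,1}(\widehat J\cdot,\cdot)=-2\Ric+\tfrac12H^2-\mathcal L_{\widehat\theta^\sharp}\widehat g$, which applies since $d\widehat H=0$. The bookkeeping you flag as the main obstacle is short. Use $dF^n=0$ (as $\iota_\xi dF=0$), $d\eta\wedge F^{n-1}=\tfrac cn F^n$, and $dF^{n-1}=\theta\wedge F^{n-1}$. With these one checks directly that $d\widehat\omega^{\,n}=\widehat\theta\wedge\widehat\omega^{\,n}$ for $\widehat\theta=\theta-c\,d\tau$. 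Hence $\theta^\sharp=\widehat\theta^\sharp+c\,\partial_\tau$, and therefore $\partial_t\widehat g=-2(\widehat\rho^B)^{1,1}(\widehat J\cdot,\cdot)-\mathcal L_{c\partial_\tau}\widehat g$.

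Since $\widehat\omega=\widehat g(\cdot,\widehat J\cdot)$ is bilinear in $(\widehat g,\widehat J)$, Step~1 then gives $\partial_t\widehat\omega=-2(\widehat\rho^B)^{1,1}-\mathcal L_{c\partial_\tau}\widehat\omega$ at once. The component-by-component comparison you outline is unnecessary.

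\textbf{What each route buys.} Yours gives a conceptual reason for the gauge: $c\,\partial_\tau$ is exactly the discrepancy between the transverse Lee field used to gauge-fix the flow on $M$ and the Lee field of the product. The paper's route is independent of the heavier generalized-Ricci-flow proposition, and it yields an explicit formula for $(\widehat\rho^B)^{1,1}$.

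\textbf{Sign conventions to fix.} With $\widehat J\partial_\tau=-\xi$ and $F=g(\cdot,\varphi\cdot)$, one has $\widehat\omega=d\tau\wedge\eta+F$, not $F+\eta\wedge d\tau$. Also, the symmetric tensor attached to a $(1,1)$-form $\alpha$ is $\alpha(\widehat J\cdot,\cdot)$, not $\alpha(\cdot,\widehat J\cdot)$. With your signs as written, the $d\tau$-components would not match.
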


\medskip 
We also characterize its stationary points within the strong class. A strong Sasaki with torsion structure is stationary if and only if
\[
dc=0,
\qquad
(\rho^B)^{1,1}=c\,d\eta.
\]
In particular, these static points are precisely $\nabla$-Einstein structures (Proposition \ref{prop:stationary_sst}). Thus, the evolution introduced in Section~6 is naturally adapted to the odd-dimensional analogue of the Bismut--Hermite--Einstein condition studied in Section~5.
\noindent 
\newline 

\smallskip

{\bf Acknowledgments:} The authors would like to thank Jeff Streets,  Diego Conti  and Vestislav Apostolov for very helpful comments and discussions.
 Anna Fino is partially supported by Project PRIN 2022 \lq \lq Geometry and Holomorphic Dynamics”, by GNSAGA (Indam) and by a grant from the Simons Foundation (\#944448).  Gueo Grantcharov is partially supported by a grant from the Simons Foundation (\#853269).

\section{Preliminaries}\label{Sec:pre}
We begin this section by briefly recalling the main definitions and properties that will be needed in the sequel. 
For a more exhaustive treatment of the theory of almost contact metric manifolds, we refer the reader to \cite{Bl,BG}.

\medskip

An \emph{almost contact manifold} is a smooth manifold \(M\) of dimension \(2n+1\) endowed with a triple 
\((\phi, \eta, \xi)\), where \(\phi\) is a \,(1,1)-tensor field, \(\eta\) is a \(1\)-form, and \(\xi\) is a vector field satisfying
\begin{equation} \label{eqn:phisquare}
    \phi^2 = -\mathrm{Id} + \eta \otimes \xi, 
    \qquad 
    \eta(\xi) = 1.
\end{equation}
The vector field \(\xi\) is usually called the \emph{Reeb vector field}. \par
Given an almost contact manifold \((M, \phi, \eta, \xi)\), the tangent bundle decomposes as
\begin{equation} \label{eqn:splitting}
    TM = \mathcal{F}_\xi \oplus \ker(\eta),
\end{equation}
where \(\mathcal{F}_\xi\) is the line distribution spanned by \(\xi\). This follows by the fact that any vector field $X$ can be uniquely written as $X=\eta(X) \xi + (X -\eta(X) \xi)$ and $\mathcal{F}_\xi \cap\ker(\eta)=\{0\}$. The distribution \(\ker(\eta)\) is usually called the \emph{horizontal distribution}. \par

It follows from \eqref{eqn:phisquare}  that the following additional relations hold:
\begin{equation} \label{eqn:phieta}
    \eta(\phi X) = 0, 
    \qquad 
    \phi(\xi) = 0, 
\end{equation}
An almost contact structure \( (\phi, \eta, \xi) \) is said to admit a \emph{compatible Riemannian metric} \(g\) if
\begin{equation} \label{eqn:metriccomp}
    g(\phi X, \phi Y) = g(X, Y) - \eta(X)\eta(Y),
\end{equation}
for all vector fields \(X, Y \in \mathfrak{X}(M)\).
A quadruple \((\phi, \eta, \xi, g)\), where \(g\) is compatible, is called an \emph{almost contact metric structure}.

It follows from \eqref{eqn:metriccomp} that:
\begin{equation} \label{eqn:phieta2}
    \eta = \xi^\flat, \quad g(\xi,\xi)=1
\end{equation}
where \(\xi^\flat\) denotes the metric dual of \(\xi\). In this case the decomposition obtained in \eqref{eqn:splitting} is an \emph{orthogonal} decomposition. Orthogonality follows from \(\eta = \xi^\flat\).

Since \(\phi(\xi) = 0\), the tensor field \(\phi\) preserves the splitting: it vanishes on \(\mathcal{F}_\xi\) and restricts to an almost complex structure on \(\ker(\eta)\).
If we denote by \(g^T\) the restriction of \(g\) to \(\ker(\eta)\), then 
\((\phi|_{\ker(\eta)}, g^T)\) defines a horizontal almost Hermitian structure. 
In particular, \(\phi|_{\ker(\eta)}\) induces a decomposition of horizontal complex forms into \((p,q)\)-types. \par
We point out that in the following we will use the following convention: for any $\alpha$ horizontal $k$-form and for any $X_1,\dots,X_k$ vector fields:
\[\phi \alpha(X_1,\dots,X_k):=(-1)^k \alpha(\phi X_1,\dots,\phi X_k).\]

\begin{rmk}
Locally, one may choose an orthonormal frame, called a \(\phi\)-basis,
\[
\{\xi, x_1, \phi x_1, \dots, x_n, \phi x_n\},
\]
where \(x_i,  \phi x_i\) form an orthonormal basis of horizontal vector fields.  
Moreover, the complex vector fields \(\{x_i - i\,\phi(x_i)\}\) form a unitary basis of horizontal \((1,0)\)-vectors.
\end{rmk}
The local geometry of an almost contact metric structure can be described as follows: 
\begin{prop}\label{prop: local model}
    There exists local coordinates $\{s,u_1,...,u_{2n}\}$ on $M$ so that $\xi=\partial_s$ and using Einstein summation convention the following holds:
    \begin{gather*}
        \eta = ds+ f_idu_i\\
        g=\eta\otimes \eta+g^T_{ij}du_i\otimes du_j,\\
        \varphi=(\varphi^{j}_i\partial_{u_i}-f_k\varphi^{k}_i\partial_s)\otimes du_i,
    \end{gather*}
where $f_i,g_{ij}^T, \varphi_i^j$ are local functions on $M$ and $\varphi_k^j\varphi_i^k=-\delta_i^j$. The horizontal distribution is then spanned by $\{\partial_{u_i}-f_i\partial_s\}_{i=1}^{2n}$ and the compatibility condition means that $g^T_{ij}$ is almost Hermitian with respect to $\varphi^j_i$. 
\end{prop}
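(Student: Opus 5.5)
The plan is to straighten the Reeb flow with flow-box coordinates and then read off every remaining object in those coordinates. Since $\eta(\xi)=1$, the vector field $\xi$ vanishes nowhere. By the flow-box (straightening) theorem, around any point $p$ there are local coordinates $\{s,u_1,\dots,u_{2n}\}$ in which $\xi=\partial_s$. I will fix these coordinates once and for all.

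Writing a general $1$-form as $\eta=a\,ds+f_i\,du_i$, the condition $\eta(\xi)=\eta(\partial_s)=1$ gives $a=1$, so $\eta=ds+f_idu_i$. Next, a vector $\partial_{u_i}+b\,\partial_s$ lies in $\ker\eta$ exactly when $f_i+b=0$. Hence the $2n$ vectors $e_i:=\partial_{u_i}-f_i\partial_s$ are pointwise linearly independent and lie in $\ker\eta$, and since $\ker\eta$ has rank $2n$ they span it. The dual coframe of $\{\xi,e_1,\dots,e_{2n}\}$ is $\{\eta,du_1,\dots,du_{2n}\}$: indeed $du_j(\xi)=0$, $du_j(e_i)=\delta_{ij}$, $\eta(e_i)=0$ and $\eta(\xi)=1$.

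In this frame we can expand both the metric and $\phi$. By \eqref{eqn:phieta2} and the orthogonality of the splitting \eqref{eqn:splitting}, $g(\xi,\xi)=1$ and $g(\xi,e_i)=0$, so
\[
g=\eta\otimes\eta+g^T_{ij}\,du_i\otimes du_j,\qquad g^T_{ij}:=g(e_i,e_j).
\]
For $\phi$, the relation $\phi\xi=0$ from \eqref{eqn:phieta} says $\phi$ only involves the $du_i$ components. The relation $\eta\circ\phi=0$ says $\phi e_i\in\ker\eta$, so $\phi e_i=\varphi^j_i e_j$ for local functions $\varphi^j_i$. It follows that
\[
\phi=\varphi^j_i\,e_j\otimes du_i=\bigl(\varphi^j_i\partial_{u_j}-f_j\varphi^j_i\partial_s\bigr)\otimes du_i,
\]
which is the stated formula after renaming indices.

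The remaining identities follow directly. Restricting \eqref{eqn:phisquare} to $\ker\eta$ gives $\phi^2=-\mathrm{Id}$ there, i.e.\ $\varphi^j_k\varphi^k_i=-\delta^j_i$. Applying \eqref{eqn:metriccomp} to horizontal vectors gives $g^T(\phi e_i,\phi e_j)=g^T(e_i,e_j)$, which is exactly the statement that $g^T$ is almost Hermitian with respect to $(\varphi^j_i)$.

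There is no real obstacle here. The only point needing care is the index bookkeeping in the $\partial_s$-correction term of $\phi$, which comes entirely from writing $e_j$ back in terms of $\partial_{u_j}$ and $\partial_s$.
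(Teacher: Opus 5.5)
Your flow-box argument is correct and complete. The paper gives no proof of this proposition and treats it as a standard fact, so yours is the natural argument it takes for granted. One small point: your derivation yields $\varphi^j_i\partial_{u_j}$ in the first term, and this is the correct reading of the statement's $\varphi^j_i\partial_{u_i}$, which is an index typo (it must match the $-f_k\varphi^k_i\partial_s$ term). So ``after renaming indices'' is really ``after correcting an index.''
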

The above proposition can be specialized to various special classes of almost contact metric structures, see for instance \cite{Pawel2000} for the Sasaki case.

Associated with an almost contact metric structure is the \(2\)-form
\begin{equation}\label{def of F}
F(X,Y) = g(X, \phi Y),    
\end{equation}
called the \emph{fundamental form}.
Since \(\phi\xi = 0\), the form \(F\) is non-zero only on horizontal vectors.
Using \eqref{eqn:metriccomp}, one checks immediately that \(F(X,Y) = F(\phi X, \phi Y)\).
Thus \(F\) plays the role of the Kähler form of the horizontal almost Hermitian structure.
It is clear that \(\xi \wedge F^n\) defines a volume form on \(M\) up to a constant factor.

\medskip

\begin{defn}
An almost contact structure is said to be \emph{normal} if
\[
N_\phi + d\eta \otimes \xi = 0,
\]
where 
\[
N_\phi(X,Y) = \phi^2[X,Y] + [\phi X, \phi Y] - \phi[\phi X, Y] - \phi[X, \phi Y]
\]
is the Nijenhuis tensor of \(\phi\).
\end{defn}

Equivalently, an almost contact metric structure is normal if the almost complex structure \(J\) on \(M \times S^1\) defined by
\begin{equation} \label{eqn:cpxstructure}
    J(X, f\,\partial_s) = (\phi X - f \xi,\ \eta(X)\,\partial_s)
\end{equation}
is integrable, where \(X \in \mathfrak{X}(M)\), \(f \in C^\infty(M)\), and \(\partial_s\) is the coordinate vector field on \(S^1\).

\begin{lem}[\cite{Bl,BG}] \label{lem:blair}
For a normal almost contact metric structure, the following hold:
\begin{align}
&\mathcal{L}_\xi \eta = \iota_\xi d\eta = 0, \label{eqn:xieta}\\
&d\eta(\phi X, Y) + d\eta(X, \phi Y) = 0, \qquad X,Y \in \mathfrak{X}(M), \label{eqn:dn11} \\
&\mathcal{L}_\xi \phi (X) = [\xi, \phi X] - \phi[\xi,X] = 0, \qquad X \in \mathfrak{X}(M). \label{eqn:xiphi}
\end{align}
\end{lem}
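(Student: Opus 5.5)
The strategy is to use the equivalent characterization of normality recalled just before the statement: the almost complex structure $J$ on $M\times S^1$ defined in \eqref{eqn:cpxstructure} is integrable. Equivalently, we can work directly with $N_\phi(X,Y)=-d\eta(X,Y)\xi$. We always use the convention $d\eta(X,Y)=X\eta(Y)-Y\eta(X)-\eta([X,Y])$.

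\emph{Step 1: \eqref{eqn:xieta}.}
Take $X=\xi$ in the normality condition. Since $\phi\xi=0$, the Nijenhuis tensor reduces to
\[
N_\phi(\xi,Y)=\phi^2[\xi,Y]-\phi[\xi,\phi Y].
\]
So normality reads
\[
\phi^2[\xi,Y]-\phi[\xi,\phi Y]+d\eta(\xi,Y)\xi=0.
\]
The first two terms lie in $\ker\eta$ by \eqref{eqn:phieta}. Applying $\eta$ therefore gives $d\eta(\xi,Y)=0$ for every $Y$, i.e.\ $\iota_\xi d\eta=0$. Cartan's formula and $\eta(\xi)=1$ then give
\[
\mathcal L_\xi\eta=d(\eta(\xi))+\iota_\xi d\eta=0.
\]

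\emph{Step 2: \eqref{eqn:xiphi}.}
From Step 1 we now have $\phi^2[\xi,Y]=\phi[\xi,\phi Y]$ for all $Y$.
\begin{itemize}
\item If $Y=\xi$, both sides of \eqref{eqn:xiphi} vanish trivially.
\item If $Y$ is horizontal, replace $Y$ by $\phi Y$ and use $\phi^2\phi Y=-\phi Y$. This gives $-\phi[\xi,\phi Y]\cdot$-type identities which combine into $\phi\bigl([\xi,\phi Y]-\phi[\xi,Y]\bigr)=0$. Hence $(\mathcal L_\xi\phi)(Y)$ is a multiple of $\xi$.
\end{itemize}
Its $\xi$-component is
\[
\eta([\xi,\phi Y])=\xi\eta(\phi Y)-\phi Y\,\eta(\xi)-d\eta(\xi,\phi Y)=0,
\]
by Step 1 and $\eta\circ\phi=0$. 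Also $\eta(\phi[\xi,Y])=0$ by \eqref{eqn:phieta}. So the $\xi$-component vanishes and $\mathcal L_\xi\phi=0$.

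\emph{Step 3: \eqref{eqn:dn11}.}
By Step 1 both sides vanish as soon as one argument is $\xi$, so it suffices to take $X,Y$ horizontal. Apply $\eta$ to the normality condition. Since $\eta\circ\phi=0$ and $\eta(\phi^2Z)=0$, we obtain
\[
\eta([\phi X,\phi Y])+d\eta(X,Y)=0,
\]
that is, $d\eta(\phi X,\phi Y)=d\eta(X,Y)$ for horizontal $X,Y$. Replacing $Y$ by $\phi Y$ and using $\phi^2Y=-Y$ gives $-d\eta(\phi X,Y)=d\eta(X,\phi Y)$, which is \eqref{eqn:dn11}.

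The only delicate point is the bookkeeping of the $\xi$-components and normalization constants in Step 2. Everything else follows directly from $\eta\circ\phi=0$, $\phi\xi=0$, and \eqref{eqn:phisquare}.
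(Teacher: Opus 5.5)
Your proof is correct. The paper gives no argument of its own and only cites Blair and Boyer--Galicki; your proof is the standard one from those sources (Blair's proof that $N^{(1)}=0$ forces $N^{(2)}=N^{(3)}=N^{(4)}=0$: insert $\xi$ into the normality tensor, apply $\eta$ and then $\phi$, and finally apply $\eta$ to the full tensor). The only blemish is the garbled substitution sentence in Step 2, and nothing is needed there: $\phi((\mathcal{L}_\xi\phi)Y)=\phi[\xi,\phi Y]-\phi^2[\xi,Y]=0$ is exactly the Step 1 identity, valid for every $Y$.
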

 
\medskip

\begin{rmk}
In what follows, we adopt the following notational convention:
\begin{enumerate}
    \item Capital letters \(X, Y, Z, \dots\) denote arbitrary vector fields on \(M\).
    \item Lowercase letters \(x, y, z, \dots\) denote horizontal vector fields.
\end{enumerate}
\end{rmk}

The Friedrich-Ivanov connection is defined as follows.

\begin{thm}[\cite{FI,FI2}] \label{thm:nabla}
Let \((M,\phi,\xi,\eta,g)\) be a normal almost contact metric manifold.  
There exists a unique metric connection \(\nabla\) with totally skew-symmetric torsion satisfying 
\begin{equation} \label{eqn:nabla}
    \nabla \phi = 0, \qquad \nabla \eta = 0,
\end{equation}
if and only if the Reeb vector field \(\xi\) is Killing.  
If it exists, \(\nabla\) is given by
\begin{equation} \label{eqn:nabla2}
    g(\nabla_X Y, Z) 
    = g(\nabla^g_X Y, Z) 
      + \frac{1}{2} \bigl(\eta \wedge d\eta + d^\phi F \bigr)(X,Y,Z),
\end{equation}
where \(d^\phi F(X,Y,Z) = -dF(\phi X, \phi Y, \phi Z)\).
\end{thm}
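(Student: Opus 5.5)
The plan is to prove both directions of Theorem~\ref{thm:nabla}: first uniqueness, which fixes the torsion, and then existence, which holds exactly when $\xi$ is Killing. Write any metric connection with totally skew-symmetric torsion as $g(\nabla_XY,Z)=g(\nabla^g_XY,Z)+\tfrac12 T(X,Y,Z)$, with $T$ a $3$-form. The conditions $\nabla\phi=0$ and $\nabla\eta=0$ then become algebraic equations for $T$ in terms of $\nabla^g\phi$ and $\nabla^g\eta$.

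The first step is to show that $\xi$ must be Killing. Since $\eta=\xi^\flat$, the condition $\nabla\eta=0$ reads $g(\nabla^g_X\xi,Z)=-\tfrac12 T(X,\xi,Z)$. The right-hand side is skew in $X,Z$, so $\nabla^g\xi$ is skew and $\xi$ is Killing. This gives necessity, and it also fixes $\iota_\xi T=-2\nabla^g\eta=-d\eta$ (the sign depends on convention). One then checks that this agrees with $\iota_\xi(\eta\wedge d\eta)=d\eta$, using $\iota_\xi d\eta=0$ from Lemma~\ref{lem:blair}; the conventions should be arranged so that the signs match.

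The second step determines the purely horizontal part of $T$. Here I would use the classical formula for $2g((\nabla^g_X\phi)Y,Z)$ for almost contact metric structures (Blair). It expresses this quantity in terms of $dF(X,\phi Y,\phi Z)-dF(X,Y,Z)$, $N_\phi(Y,Z,\phi X)$, and terms involving $d\eta$ and $\eta$. Normality replaces $N_\phi$ by $-d\eta\otimes\xi$. The condition $\nabla\phi=0$ is equivalent to
\[
T(X,Y,\phi Z)+T(X,\phi Y,Z)=-2g((\nabla^g_X\phi)Y,Z).
\]
Restricting to horizontal $x,y,z$ and using the $\phi$-invariance of the $(2,1)+(1,2)$ decomposition, this determines the horizontal part of $T$ uniquely, exactly as for the Bismut connection. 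That horizontal part is forced to be $d^\phi F$ restricted to horizontal vectors. The mixed components of $T$ with one $\xi$ slot are already fixed by the first step. This proves uniqueness and yields the formula \eqref{eqn:nabla2}.

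For existence, I would define $\nabla$ by \eqref{eqn:nabla2} and verify $\nabla\eta=0$ and $\nabla\phi=0$ by evaluating the identity above case by case. The cases are: all arguments horizontal; one of $Y,Z$ equal to $\xi$; and $X=\xi$. The last case uses Lemma~\ref{lem:blair}, namely $\mathcal L_\xi\phi=0$, together with the Killing property, to get $\nabla^g_\xi\phi=$ (terms matching $T(\xi,\cdot,\cdot)$). I expect the main obstacle to be the horizontal case. It requires the precise Blair formula for $\nabla^g\phi$ and careful checking that the $(3,0)+(0,3)$ part of $dF$ cancels under normality. This cancellation is where normality, not merely skew-symmetry of $N_\phi$, is used. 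I would organise this computation in a $\phi$-basis using type decomposition.
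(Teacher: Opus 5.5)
The paper does not prove this statement; it quotes it from \cite{FI,FI2}. Your outline is essentially the Friedrich--Ivanov argument, so the approach is correct, but two details need fixing.

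\textbf{The sign in the first step.} There is no sign to ``arrange''. The condition $\nabla\xi=0$ gives $g(\nabla^g_X\xi,Z)=-\tfrac12T(X,\xi,Z)$. For Killing $\xi$ one has $d\eta(X,Z)=2g(\nabla^g_X\xi,Z)$. Hence $\iota_\xi T=2\nabla^g\eta=+d\eta$, which matches $\iota_\xi(\eta\wedge d\eta)=d\eta$ exactly with the conventions of \eqref{eqn:nabla2}. Your $-d\eta$ is a computational slip, not a convention issue.

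\textbf{The horizontal case.} What you call the main obstacle follows from the same Blair formula.
\begin{itemize}
\item On $\ker\eta$ the $\eta$-terms vanish and normality kills the Nijenhuis term. The formula then reads $(\nabla^g_xF)|_{\ker\eta}=(\iota_x dF)^{2,0+0,2}$ for horizontal $x$.
\item Substitute this into $dF(x,y,z)=\mathfrak S_{x,y,z}(\nabla^g_xF)(y,z)$ and evaluate on horizontal $(1,0)$-vectors $u,v,w$. This gives $dF(u,v,w)=3\,dF(u,v,w)$, so $(dF)^{3,0+0,3}=0$ on $\ker\eta$.
\item Since
\[
dF(x,y,z)-dF(x,\phi y,\phi z)-dF(\phi x,y,\phi z)-dF(\phi x,\phi y,z)=4\,(dF)^{3,0+0,3}(x,y,z),
\]
this is exactly the identity needed for $T=d^\phi F$ to satisfy $T(x,\phi y,z)+T(x,y,\phi z)=-2g((\nabla^g_x\phi)y,z)$ on horizontal vectors.
\end{itemize}

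The remaining mixed cases reduce, as you say, to the Killing property, $\mathcal L_\xi\phi=0$, and the $(1,1)$-type of $d\eta$.
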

\begin{defn} [\cite{CoMa}]
 Let \((M,\phi,\xi,\eta, g)\) be a normal almost contact metric manifold with Killing Reeb vector field. The quintuple \((M,\phi,\xi, \eta, g, \nabla)\) will be called a \emph{Sasaki with torsion structure}.    
\end{defn}
We note that the Friedrich--Ivanov connection is defined under the weaker assumption that the Nijenhuis tensor does not vanish but is totally skew-symmetric, that is,
\[
N_\phi(X,Y,Z) = g\bigl(N_\phi(X,Y),Z\bigr) \in \Omega^3(M).
\]
However, since our primary interest lies in the five-dimensional case, we may assume the almost metric contact structure to be normal. Indeed, in dimension five, the condition that the Nijenhuis tensor be totally skew-symmetric is equivalent to its vanishing \cite{CM}.\par
Since \(\nabla g = 0\), equations \eqref{eqn:nabla} imply immediately that \(\nabla \xi = 0\).
There is an evident analogy between \(\nabla\) and the Bismut connection of a Hermitian manifold; in particular, $
\mathrm{Hol}(\nabla) \subseteq \mathrm{U}(n).$

Moreover, \(\nabla\) coincides with the Levi--Civita connection \(\nabla^g\) if and only if both \(d\eta\) and \(dF\) vanish.
Equivalently, this occurs precisely when \(M\) is co--Kähler. In the compact case, $M$ has the structure of a mapping torus of a Kähler manifold via a Kähler isometry \cite{Li}.
In this case, the almost contact metric structure is parallel with respect to \(\nabla^g\), exactly as in the Kähler setting.

\medskip

Examples of normal almost contact metric manifolds with Killing Reeb vector field include Sasaki and quasi-Sasaki manifolds, both having Kähler transverse geometry.

More generally, any normal almost contact metric manifold with Killing Reeb vector field is \emph{locally} an \(S^1\)-bundle with curvature \(d\eta\) over a Hermitian manifold \((U, \phi|_{\ker(\eta)}, g^T)\) with fundamental form \(F\).

Conversely, any \(S^1\)-bundle (orbibundle) over a Hermitian manifold (orbifold) whose curvature form is of type \((1,1)\) with respect to the complex structure defines a normal almost contact metric manifold with Killing Reeb vector field \cite{CoMa}

\medskip

For horizontal vector fields \(x,y,z\), equation \eqref{eqn:nabla2} gives
\[
g(\nabla_x y, z) = g(\nabla^g_x y, z) + \tfrac12 d^\phi F(x,y,z) = g(\nabla^B_x y, z),
\]
where \(\nabla^B\) is the Bismut connection of the transverse Hermitian structure.
Indeed, for horizontal vectors, \(d^\phi F\) agrees with the torsion of the transverse Bismut connection. 
\begin{rmk}\label{rmk:rmk2}
Let $x$ be a horizontal vector field.  
Since $\nabla$ preserves both the Reeb foliation $\mathcal{F}_\xi$ and the horizontal distribution $\ker(\eta)$, we have $\nabla x \in \ker(\eta)$.  
In particular, for any horizontal vector field $y$, the only nonzero components of $\nabla_y x$ lie in the horizontal distribution.  

  Therefore, for any horizontal $z$,
\[
g(\nabla_y x, z) = g(\nabla^B_y x, z),
\]
and hence
\[
\nabla_y x = \nabla^B_y x.
\]
In other words, the connection $\nabla$ agrees with the transverse Bismut connection $\nabla^B$ on horizontal vector fields. The curvature of $\nabla^B$ will be denoted by $R^B$ throughout the paper.
\end{rmk}

We conclude this section with the following two remarks:
\begin{rmk} \label{rmk:rmk}
We record here some immediate consequences that hold on any normal almost contact metric manifold with Killing Reeb vector field:
\begin{enumerate}
    \item \(\mathcal{L}_\xi F = \iota_\xi dF = 0\), as a consequence of \(\mathcal{L}_\xi g = 0\) and \eqref{eqn:xiphi}.
    \item \(\iota_\xi d^\phi F = 0\), by equation \eqref{eqn:phieta}.
    \item \(\mathcal{L}_\xi d^\phi F = \iota_\xi d d^\phi F = 0\), using \(\mathcal{L}_\xi F = 0\) (hence \(\mathcal{L}_\xi dF = 0\)) and equation \eqref{eqn:xiphi}.
    \item \(\nabla_X d\eta(\xi, \cdot) = 0\), using equation \eqref{eqn:xieta} and \(\nabla \xi = 0\).
    \item \(\nabla_X d^\phi F(\xi, \cdot, \cdot) = 0\), using item~2 and \(\nabla \xi = 0\).
    \item $[\xi, \mathcal{F}^\perp]\subset \mathcal{F}^\perp_\xi$, using that $\mathcal{L}_{\xi} g(\xi,x)=0$ for any $x \in \mathcal{F}^\perp_\xi$.
    \item $\mathcal{L}_\xi g^T=0$ and $\mathcal{L}_\xi \phi_{| \mathcal{F}^\perp_\xi}=0$, using item 6. 
\end{enumerate}
\end{rmk}
In the following an almost contact metric manifold will be denoted simply by $(\phi, \xi,g)$. In fact, the $1$-form $\eta$ is recovered from the relation $\eta=\xi^\flat$. \par
\medskip
We include here the proof of the following proposition, which is well known in the Sasaki case by \cite{BGM,Smoczyk2010}. 
\begin{prop} \label{prop: normality variation}
Let $(\phi_0, \xi_0,\eta_0,g_0^T)$ be a normal almost contact structure and let $f$ be a basic function, that is $df(\xi_0)=0$. Then the structure $(\phi, \xi_0,\eta,g_0^T)$ where
\[
\eta:=\eta_0+d^{\phi_{0}}f, \, \, \, \, \phi:=\phi_0-\xi_0 \otimes df,  
\]
is again normal almost contact. 
\end{prop}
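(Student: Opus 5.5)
Since $df(\xi_0)=0$, we have $\iota_{\xi_0}d^{\phi_0}f=-df(\phi_0\xi_0)=0$. So $\eta(\xi_0)=1$, and $d^{\phi_0}f$ is a horizontal $1$-form with respect to $\eta_0$. Set $\alpha:=d^{\phi_0}f$, so $\eta=\eta_0+\alpha$. For the structure to be almost contact we need $\phi=\phi_0-\xi_0\otimes\alpha$ (with the sign convention $\phi\alpha(X)=-\alpha(\phi X)$, this is the correction needed so that $\eta\circ\phi=0$). We read the proposition's ``$\xi_0\otimes df$'' in this sense, i.e.\ $\phi X=\phi_0X-\alpha(\phi_0 X)\,\xi_0$ up to the paper's sign convention. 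We will fix the correction term precisely by requiring $\eta\circ\phi=0$.

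\textbf{Step 1 (algebraic conditions).} First check $\phi\xi_0=\phi_0\xi_0=0$, since $\alpha(\phi_0\xi_0)=0$, and $\eta(\phi X)=\eta_0(\phi X)+\alpha(\phi X)$; this vanishes by the choice of correction. Then compute $\phi^2X=-X+\eta(X)\xi_0$ directly, using $\phi_0^2=-\mathrm{Id}+\eta_0\otimes\xi_0$ and $\phi\xi_0=0$. The new kernel $\ker\eta$ is the graph of $-\alpha$ over $\ker\eta_0$, and $\phi$ restricted to it corresponds to $\phi_0$ on $\ker\eta_0$ under the projection along $\xi_0$. Compatibility of the metric $\eta\otimes\eta+g^T_0$ (with $g^T_0$ transported via this identification) is then automatic.

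\textbf{Step 2 (normality).} We use the equivalent criterion \eqref{eqn:cpxstructure}: normality holds iff $J$ on $M\times S^1$ is integrable. Let $J_0$ be the complex structure built from $(\phi_0,\xi_0,\eta_0)$, which is integrable by hypothesis. The key observation is that $J$ is obtained from $J_0$ by a diffeomorphism, or more cleanly that the $(1,0)$-bundles agree after a gauge change. Concretely, $T^{1,0}_J$ is spanned by $x-i\phi x$ for $x\in\ker\eta$ together with $\xi_0-i\partial_s$ (up to sign conventions). Writing $x=x_0-\alpha(x_0)\xi_0$ with $x_0\in\ker\eta_0$ gives
\[x-i\phi x=(x_0-i\phi_0x_0)-\bigl(\alpha(x_0)-i\alpha(\phi_0 x_0)\bigr)\xi_0 .\]
Next, compare with the $J_0$-$(1,0)$ vectors $x_0-i\phi_0x_0$ and $\xi_0-i\partial_s$. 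The coefficient $\alpha(x_0)-i\alpha(\phi_0x_0)$ is, up to a factor $\pm i$, equal to $2\,\partial f(x_0-i\phi_0x_0)$, since $\alpha=d^{\phi_0}f$ is essentially $i(\bar\partial-\partial)f$ transversally. This suggests that $T^{1,0}_J$ is the image of $T^{1,0}_{J_0}$ under the diffeomorphism $(p,s)\mapsto(p,s+f(p))$, or its flow analogue $\exp(f\,\partial_s)$. Indeed, that map shears $x_0\mapsto x_0+df(x_0)\partial_s$, and replacing $\partial_s$ by $J_0$-related $\xi_0$ produces exactly the correction above.

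We therefore plan to verify the identity
\[
J=\Psi_*J_0\Psi_*^{-1}+(\text{terms vanishing})
\]
for $\Psi(p,s)=(p,s+f(p))$ combined with $J_0$-holomorphic reinterpretation. Failing an exact identity, we would directly check involutivity of $T^{1,0}_J$. Brackets of $\xi_0-i\partial_s$ with $x-i\phi x$ stay in the span by \eqref{eqn:xiphi}, \eqref{eqn:xieta} and $\xi_0 f=0$. Brackets of two horizontal generators differ from the $J_0$ ones by terms $[Z_0,\beta(W_0)\xi_0]$, with $\beta=\alpha-i\phi_0\alpha$. Their $\xi_0$-component involves $d\beta$ on $(1,0)$ vectors, which vanishes because $\beta$ is proportional to $\partial_B f$ and $\partial_B^2=0$ for the integrable transverse complex structure. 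This is the main obstacle: tracking signs and the $\xi_0$-components of $[x_0,y_0]$, where $d\eta_0$ enters, to confirm that everything closes. We expect to use \eqref{eqn:dn11} for the $(2,0)$-vanishing of $d\eta_0$ and the basicness of $f$.
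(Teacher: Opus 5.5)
Your approach is correct and genuinely different from the paper's. The paper stays on $M$. It verifies $\phi^2=-\mathrm{Id}+\eta\otimes\xi_0$, then expands $N_\phi(X,Y)$ bracket by bracket. It cancels the cross terms using $\mathcal L_{\xi_0}\phi_0=0$ and $df(\xi_0)=0$, and identifies the remainder as $-\bigl(d\eta_0+dd^{\phi_0}f\bigr)(X,Y)\,\xi_0$.

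You instead pass to $M\times S^1$ via \eqref{eqn:cpxstructure}, and your shear idea closes the argument \emph{exactly}. Neither the ``terms vanishing'' nor the involutivity fallback is needed. Take $\Psi(p,s)=(p,s+f(p))$, which is well defined on $M\times S^1$ as a fibrewise rotation. Then $\Psi_*X=X+df(X)\partial_s$, $\Psi_*\partial_s=\partial_s$, and
\[
\Psi_*^{-1}J_0\Psi_*X=\phi_0X-df(X)\xi_0+\bigl(\eta_0(X)-df(\phi_0X)+df(X)\,df(\xi_0)\bigr)\partial_s,\qquad \Psi_*^{-1}J_0\Psi_*\partial_s=-\xi_0+df(\xi_0)\partial_s .
\]
Basicness kills exactly the two $df(\xi_0)$ terms. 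What remains is $JX=\phi X+\eta(X)\partial_s$ and $J\partial_s=-\xi_0$. Hence $J=\Psi^*J_0$, so $\Psi\colon(M\times S^1,J)\to(M\times S^1,J_0)$ is a biholomorphism and $J$ is integrable.

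Mind the direction. $\Psi_*$ maps $T^{1,0}_J$ onto $T^{1,0}_{J_0}$, so $T^{1,0}_J$ is the image of $T^{1,0}_{J_0}$ under $(p,s)\mapsto(p,s-f(p))$, not under $\Psi$ as you wrote. Your own generators confirm this: $x-i\phi x=Z_0+i\,df(Z_0)\xi_0$, which is $Z_0-df(Z_0)\partial_s$ modulo $\xi_0-i\partial_s$.

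Compared with the paper, your argument is two lines and shows precisely where basicness enters. It uses nothing about $(\phi_0,\xi_0,\eta_0)$ beyond integrability of $J_0$. It also proves more: the old and new complex structures on $M\times S^1$ are biholomorphic. In fact it is the time-integrated form of the gauge $\chi_t$ generated by $c_t\partial_\tau$ in Theorem~\ref{thm:SWT-pluriclosed}. The paper's computation, in exchange, works directly with the tensorial definition of normality and does not rely on its equivalence with integrability of $J$.

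One correction to your opening: the statement's formula $\phi X=\phi_0X-df(X)\,\xi_0$ is right as written and needs no reinterpretation. Your first guess $\phi=\phi_0-\xi_0\otimes d^{\phi_0}f$ is wrong, since it gives $\eta(\phi X)=df(X)+df(\phi_0X)\neq0$. Your ``i.e.'' formula $\phi_0X-\alpha(\phi_0X)\xi_0$ does coincide with the statement's, because $\alpha(\phi_0X)=-df(\phi_0^2X)=df(X)$ for basic $f$. So the endomorphism you actually work with is the correct one.
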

\begin{proof}
The fact that $\eta(\xi_0)=1$ immediately follows since $\eta_0(\xi_0)=1$ and $f$ is basic. Now, we check that $\phi^2=-\mathrm{Id}+\eta \otimes \xi_0$. 
\begin{align*}
\phi^2(X)&=(\phi_0-\xi_0 \otimes df) (\phi_0(X)-df(X) \xi_0 )\\
&=\phi_0^2(X) - df(\phi_0 X)\xi_0\\
&= -X + (\eta_0+d^{\phi_{0}}f)(X) \xi_0.  
\end{align*}
where we used that $\phi_0(\xi_0)=0$ and again that $f$ is basic. To conclude the result we need to check the normality condition
\[
\phi^2[X,Y] + [\phi X, \phi Y] - \phi[\phi X, Y] - \phi[X, \phi Y]+d\eta \otimes \xi_0=0.
\]
We analyze each term individually: 
\begin{align*}
\phi^2[X,Y]&=-[X,Y]+\eta_0([X,Y]) \xi_0 + d^{\phi_{0}}f([X,Y]) \xi_0; \\
 [\phi X, \phi Y]&= [\phi_0 X, \phi_0 Y]-(\phi_0 X (df(Y))) \xi_0  +(\phi_0 Y (df(X))) \xi_0+df(X) \, df([\xi_0, Y]) \xi_0 \\&  \ \ \ \  -df(Y) \, df([\xi_0, X]) \xi_0-df(Y) \, [\phi_0 X, \xi_0]-df (X) \, [\xi_0, \phi_0 Y];\\
 -\phi [\phi X, Y]&= -\phi_0([\phi_0 X,Y])+df([\phi_0 X,Y]) \xi_0+df(X) \, \phi_0[\xi_0, Y]-df(X) df([\xi_0, Y]) \xi_0; \\
 -\phi [X, \phi Y ]&= \phi_0([\phi_0 Y,X])-df([\phi_0 Y,X]) \xi_0-df(Y) \, \phi_0[\xi_0, X]+df(Y) df([\xi_0, X]) \xi_0.
\end{align*}
The terms $-df(X)[\xi_0, \phi_0 Y]$ in the second expression cancel with $df(X) \,  \phi_0 [\xi_0, Y]$ in the third one since $\mathcal{L}_{\xi} \phi_0=0$. The same argument apply to $-df(Y)[\phi_0 X, \xi_0]$ in the second expression and $-df(Y) \, \phi_0 [\xi_0, X]$ in the fourth. Furthermore, $df(X) df([\xi_0,Y]) \xi_0$ in the second expression cancels with the same terms of opposite sign in the third, and the same argument applies also to $df(Y) df([\xi_0,X]) \xi_0$. \par
Collecting all the remaining terms, we get
\begin{align*}
&N_{\phi_{0}}(X,Y)+ ( d^{\phi_{0}}f([X,Y]) -(\phi_0 X (df(Y)))  +(\phi_0 Y (df(X)))+df([\phi_0 X,Y])-df([\phi_0 Y,X]) ) \xi_0=\\
& N_{\phi_{0}}(X,Y)-Y(df(\phi_0 X))+X(df(\phi_0 Y))+\phi_0 df([X,Y])=-\left(d\eta_0(X,Y) +d d^{\phi_{0}} f (X,Y)\right) \xi_0,
\end{align*}
where the second line follows by explicitly writing the brackets and the last line follows since the initial almost contact structure is normal. 
\end{proof}

Throughout this paper, all manifolds are assumed to be connected, unless explicitly stated otherwise.

\section{Co-K\"ahler-like manifolds}
Throughout this section, \((M, \phi, \xi, g, \nabla)\) denotes a Sasaki with torsion manifold.

\begin{defn}
Let \((M, \phi, \xi, g, \nabla)\) be a Sasaki with torsion manifold.  
We say that \((M, \phi, \xi, g)\) is \emph{co--K\"ahler-like} if the connection \(\nabla\) satisfies the first Bianchi identity.
\end{defn}

The notion of co--K\"ahler-likeness arises as a natural analogue of a concept introduced by Angella, Otal, Ugarte and Villacampa in the Hermitian setting \cite{AOUV}.  
There, a Hermitian manifold is said to be \emph{K\"ahler-like} when the Bismut connection satisfies the first Bianchi identity.  
A conjecture proposed in that work was later proved by Zhao and Zheng, who obtained the following characterization.

\begin{thm}[\cite{ZZ}] \label{thm:ZZ}
Let \((M, J, g)\) be a Hermitian manifold.  
The Bismut connection \(\nabla^B\) satisfies the first Bianchi identity if and only if its torsion \(3\)-form is both parallel and closed.
\end{thm}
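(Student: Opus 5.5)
The statement is cited from \cite{ZZ}; here is how I would reprove it. The plan is to work with the general curvature identities for a metric connection $\nabla^B$ with totally skew-symmetric torsion $H$. We write $H(X,Y)$ for the vector with $g(H(X,Y),Z)=H(X,Y,Z)$, and $\sigma^H(X,Y,Z,W)=\mathfrak S_{X,Y,Z}\, g(H(X,Y),H(Z,W))$, which is a $4$-form. The first step is to record the two standard identities
\[
\mathfrak S_{X,Y,Z} R^B(X,Y,Z,W)= dH(X,Y,Z,W)-(\nabla^B_W H)(X,Y,Z)-\sigma^H(X,Y,Z,W),
\]
\[
dH=\mathfrak{a}(\nabla^B H)+2\sigma^H,
\]
where $\mathfrak a$ denotes the full alternation of $(X,Y,Z,W)\mapsto(\nabla^B_XH)(Y,Z,W)$. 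Both follow by substituting $\nabla^B=\nabla^g+\tfrac12 H$ and using the Bianchi identity for $\nabla^g$.

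\textbf{Easy direction.} If $\nabla^B H=0$ and $dH=0$, the second identity gives $\sigma^H=0$. The first identity then makes the right-hand side vanish, so the first Bianchi identity holds.

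\textbf{Converse, step one.} Assume the Bianchi identity holds. In the first identity the terms $dH$ and $\sigma^H$ are totally skew in all four slots. Hence $A(W,X,Y,Z):=(\nabla^B_WH)(X,Y,Z)$ is itself a $4$-form, and so $\mathfrak a(\nabla^B H)=4A$. Substituting gives $dH=4A+2\sigma^H$ and $dH=A+\sigma^H$, hence
\[
3A=-\sigma^H \qquad\text{and}\qquad dH=-\tfrac13\sigma^H .
\]
It therefore suffices to prove $\sigma^H=0$.

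\textbf{Converse, step two: types.} Next I would use the Hermitian structure. Since $\nabla^B J=0$ and $H=-d^cF$ has type $(2,1)+(1,2)$, every $\nabla^B_WH$ also has type $(2,1)+(1,2)$ in $(X,Y,Z)$. A $4$-form all of whose contractions have type $(2,1)+(1,2)$ must be of pure type $(2,2)$, so $A$ is a $(2,2)$-form. Moreover, the Bianchi identity together with $R^B(X,Y,JZ,JW)=R^B(X,Y,Z,W)$ yields pair symmetry of $R^B$, and then $J$-invariance in the first pair. Thus $R^B$ is a Kähler-type curvature tensor, and in particular $R^B(x,y,\cdot,\cdot)=0$ whenever $x,y$ are both of type $(1,0)$.

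\textbf{Main obstacle.} The hard step is extracting $\sigma^H=0$ from this. I would first try to write $R^B(JX,JY,Z,W)-R^B(X,Y,Z,W)$ via the comparison $R^B=R^g+\tfrac12\nabla^g H+\tfrac14(\text{quadratic in }H)$. The Levi-Civita terms are controlled, so this difference is expressed through $\nabla^B H=A$ and the quadratic term $\sigma^H$. Setting it to zero and evaluating on unitary frames, in the components where $A$ (of type $(2,2)$) drops out, should give pointwise quadratic identities of the form $\sum_k |H(e_i,e_j,\bar e_k)|^2=0$, or the vanishing of the $(3,1)$-part of $\sigma^H$. Combined with $\sigma^H=-3A$ being of type $(2,2)$, and with a trace (Ricci-type) contraction comparing $\mathrm{tr}\,\sigma^H$ against $|H|^2$-terms, I expect this to force $\sigma^H=0$. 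Then $A=0$ and $dH=0$, as required. The type bookkeeping in this last contraction is where I expect the genuine difficulty, and it is essentially the content of \cite{ZZ}.
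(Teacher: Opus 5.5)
\textbf{The gap.} The converse is where the proof has to happen: everything hinges on showing $\sigma^H=0$. The proposal does not do this, and the mechanism you sketch cannot do it.

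In the comparison formula for $R^B$, both $R^g$ and the terms quadratic in $H$ are pair-symmetric. Hence $D(X,Y,Z,W):=R^B(X,Y,Z,W)-R^B(Z,W,X,Y)$ is linear in $\nabla^BH$ (cf.\ Lemma~\ref{lem:symm2}). Since $R^B(Z,W,JX,JY)=R^B(Z,W,X,Y)$ on every Hermitian manifold, you get
\[
R^B(JX,JY,Z,W)-R^B(X,Y,Z,W)=D(JX,JY,Z,W)-D(X,Y,Z,W).
\]
This contains no $R^g$ and no quadratic $H$-terms, and it vanishes identically once $A=\nabla^BH$ is a $4$-form. So evaluating this difference on unitary frames returns $0=0$.

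The only information your route extracts is that $\sigma^H=-3A$ has type $(2,2)$, i.e.\ $(\sigma^H)^{3,1}=0$, which is a Jacobi-type identity for the torsion. ``Combining'' this with $\sigma^H=-3A$ uses the same fact twice. What actually has to be shown is $(\sigma^H)^{2,2}=0$, and nothing in the proposal addresses it.

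Two further problems with the sketch:
\begin{itemize}
\item The identity you anticipate, $\sum_k|H(e_i,e_j,\bar e_k)|^2=0$, would force $H^{2,1}=0$ and hence $H=0$. That is false for non-K\"ahler Bismut K\"ahler-like manifolds. An example is the Bismut-flat Samelson structure on $\mathrm{SU}(2)\times\mathrm{U}(1)$, where $\sigma^H=0$ by the Jacobi identity but $H\neq0$.
\item A single trace identity gives one scalar relation with no sign. Integration is not available either, since the statement is local.
\end{itemize}

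\textbf{What is sound.} The paper gives no proof of this statement; it only cites \cite{ZZ}. The parts you carried out coincide with the general facts the paper quotes:
\begin{itemize}
\item the easy direction;
\item the reduction of the converse to $\sigma^H=0$ (Theorem~\ref{thm:IS});
\item the $(2,2)$-type of $\nabla^BH$;
\item the K\"ahler symmetries of $R^B$ (Lemma~\ref{lem:symm2}).
\end{itemize}

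There are two slips, both harmless for the reduction. First, with your (correct) second identity, the first one must read $\mathfrak S_{X,Y,Z}R^B(X,Y,Z,W)=dH(X,Y,Z,W)+(\nabla^B_WH)(X,Y,Z)-\sigma^H(X,Y,Z,W)$, as in \eqref{eqn:1bianchi}. Second, $3A=-\sigma^H$ gives $dH=\tfrac23\sigma^H=-2A$, not $-\tfrac13\sigma^H$.

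\textbf{A possible repair.} You could cite the curvature characterization of Bismut-parallel torsion from \cite{ZZ2}, which the paper uses in Step~5 of the proof of Theorem~\ref{thm:CKL}. Then the gap closes at once:
\begin{itemize}
\item your K\"ahler symmetries give \eqref{eqn:1} and \eqref{eqn:2};
\item the first Bianchi identity together with \eqref{eqn:1} gives $\mathrm{Ric}(Q)_{x\bar y}=-\sum_iR^B(e_i,x,\bar y,\bar e_i)=0$, so \eqref{eqn:3} and \eqref{eqn:4} hold trivially;
\item hence $\nabla^BH=0$, and then $dH=-2\nabla^BH=0$ by Theorem~\ref{thm:IS}.
\end{itemize}
This only replaces one Zhao--Zheng theorem by another of comparable depth. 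A self-contained proof needs an input beyond the pointwise symmetries you list, and that input is exactly what \cite{ZZ} provides.
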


In what follows, we establish an analogous result for the Friedrich-Ivanov connection on almost contact metric manifolds.

\begin{thm} \label{thm:CKL}
Let \((M, \phi, \xi, g, \nabla)\) be a Sasaki with torsion manifold. Then $(\phi, \xi, g)$ is co--K\"ahler-like if and only if the torsion \(3\)-form
\[
H = \eta \wedge d\eta + d^{\phi} F
\]
is both closed and parallel.
\end{thm}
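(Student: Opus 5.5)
The argument rests on the general identity relating the first Bianchi identity of a metric connection with skew torsion $H$ to $dH$ and $\nabla H$. For $\nabla = \nabla^g + \tfrac12 H$ one has the standard formula
\[
\mathfrak{S}_{X,Y,Z} R^\nabla(X,Y,Z,W) = dH(X,Y,Z,W) - \sigma_H(X,Y,Z,W) + (\nabla_W H)(X,Y,Z),
\]
where $\sigma_H = \tfrac12\sum_i (e_i\lrcorner H)\wedge(e_i\lrcorner H)$. It also satisfies $dH = \mathfrak{S}(\nabla H) + 2\sigma_H$, with the alternation convention used for $\mathfrak S$. The sufficiency direction follows from these: if $\nabla H=0$ then $dH = 2\sigma_H$, and $dH=0$ then forces $\sigma_H=0$, so the right-hand side vanishes and $\nabla$ satisfies Bianchi.

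For necessity, assume $\mathfrak{S}R^\nabla = 0$. As in Zhao--Zheng (Theorem~\ref{thm:ZZ}), the key input is that $R^\nabla(X,Y)$ takes values in $\mathfrak{u}(n)$ acting on $\ker\eta$. Indeed $\nabla\phi=0$ and $\nabla\xi=0$ give $R^\nabla(X,Y)\xi=0$ and $R^\nabla(X,Y)\phi=\phi R^\nabla(X,Y)$.

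The first step is to exploit $\xi$. Taking $W=\xi$ or $Z=\xi$ in the Bianchi identity and using $R^\nabla(\cdot,\cdot)\xi=0$ gives $R^\nabla(x,y,\xi,\cdot)=0$. Bianchi then yields $R^\nabla(\xi,x,y,z)$ cyclic-sum identities. Combined with the pair symmetry, which holds because a Bianchi-satisfying metric connection has $R(X,Y,Z,W)=R(Z,W,X,Y)$, this gives $R^\nabla(\xi,\cdot,\cdot,\cdot)=0$. So $\xi$ drops out of the curvature entirely.

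The second step is the horizontal part. By Remark~\ref{rmk:rmk2}, $\nabla$ restricted to horizontal fields is the transverse Bismut connection. On horizontal vectors, however, $R^\nabla$ differs from $R^B$ by terms involving $d\eta$, because $[x,y]$ has vertical component $-d\eta(x,y)\xi$ and $\nabla_\xi$ acts on $\ker\eta$. I will instead argue directly with the standard Zhao--Zheng trick. Since $R^\nabla$ is $\phi$-invariant in the last two slots and satisfies Bianchi and pair symmetry, it is also $\phi$-invariant in the first two. The Bianchi identity then reduces to the vanishing of the components of $\nabla H - \tfrac13\mathfrak{S}(\ldots)$ and $dH - \sigma_H$ of certain $(p,q)$-types. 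One decomposes the identity $\mathfrak{S}R^\nabla = dH - \sigma_H + \nabla_\cdot H$ by type and by $\xi$-components. The $\xi$-components give $\nabla_\xi H$ and $\xi\lrcorner dH$ in terms of $d\eta$ data. The horizontal components reproduce the Hermitian computation of \cite{ZZ} for the transverse torsion $d^\phi F$, with extra terms $d\eta\wedge d\eta$ coming from $\eta\wedge d\eta$ in $\sigma_H$.

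The main obstacle is precisely this bookkeeping. I must show that the left side, being of type $(2,2)$ horizontally by the $\mathfrak u(n)$ symmetry on both pairs, forces the $(3,1)+(1,3)$ and $(4,0)$ parts of $\nabla_W H$ to vanish, and hence $\nabla H=0$. Concretely, I will first show $\nabla_\xi H=0$ and $\nabla d\eta=0$ from the $\xi$-components, using items 4--5 of Remark~\ref{rmk:rmk}. I will then run the Zhao--Zheng argument on $d^\phi F$, treating $d\eta$ as a parallel $(1,1)$-form. Once $\nabla H=0$ holds, the first identity gives $dH=\sigma_H$, and the second gives $dH=2\sigma_H$; hence $dH=\sigma_H=0$, completing the proof.
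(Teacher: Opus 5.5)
You handle sufficiency and the Reeb direction correctly. Pair symmetry together with $R^\nabla(\cdot,\cdot)\xi=0$ gives $R^\nabla(\xi,\cdot,\cdot,\cdot)=0$, hence $\nabla_x d\eta=0$ by Lemma~\ref{lem:curvature}, and $\nabla_\xi d\eta=0$ holds automatically. The gap is in the horizontal step, which carries the whole content of the theorem. Your mechanism is that the $\mathfrak{u}(n)$-type of the curvature kills the $(4,0)$ and $(3,1)+(1,3)$ parts of $\nabla H$, ``hence $\nabla H=0$''. That inference fails. Here $dH=d\eta\wedge d\eta+dd^\phi F$ is a basic horizontal $(2,2)$-form, by normality and $\iota_\xi dH=0$. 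Under the Bianchi hypothesis, Theorem~\ref{thm:IS} already gives $\nabla H=-\tfrac12 dH$, so the components you propose to kill vanish for free. What must be proved is that the $(2,2)$-part vanishes, i.e.\ $dH=0$. This is the odd-dimensional analogue of the Angella--Otal--Ugarte--Villacampa conjecture, and it needs a differential argument, not type bookkeeping.

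Nor can you simply ``run the Zhao--Zheng argument on $d^\phi F$''. On horizontal vectors the Bianchi identity for $\nabla$ becomes $\mathfrak{S}_{x,y,z}R^B(x,y,z,w)=-\tfrac12(d\eta\wedge d\eta)(x,y,z,w)$. So the transverse Bismut connection is not K\"ahler-like and Theorem~\ref{thm:ZZ} does not apply; knowing that $d\eta$ is parallel does not remove this defect. The paper instead uses the curvature characterization of Bismut-torsion-parallel Hermitian structures from \cite{ZZ2}.
\begin{itemize}
\item The first two conditions follow from pair symmetry of $R^B$ (Lemma~\ref{lem:symm2}, since $\nabla^B d^\phi F$ is a horizontal $4$-form).
\item Because of the defect, $\mathrm{Ric}(Q)_{x\overline y}=-\tfrac12\sum_i(d\eta\wedge d\eta)(x,\overline y,e_i,\overline{e_i})$ is not zero. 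Parallelism of $d\eta$ gives $\nabla^B\mathrm{Ric}(Q)=0$.
\item The last condition, $\mathrm{Ric}(Q)_{\chi\overline y}=0$, needs a genuinely new fact: the transverse Lee field satisfies $\theta^\sharp\in\ker d\eta$.
\end{itemize}
The paper proves this new fact in three moves. It shows that $\nabla\omega^\nabla$ is a $2$-form for $\omega^\nabla=c\,\eta+\phi\theta$, because $\nabla H$ is a $4$-form. It shows that $c$ is constant. It then evaluates $0=(\nabla_\xi\omega^\nabla)(x)=d\eta((\phi\theta)^\sharp,x)$. Your plan has no substitute for this step, so $\nabla^B d^\phi F=0$, and hence $\nabla H=0$, remains unproved.
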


Before proving this result, we recall two standard facts about metric connections with skew-symmetric torsion.  
These results are classical and hold for any such connection; for details we refer to \cite{AFF, IS} and references therein.

\medskip

Since \(\nabla\) is not torsion-free, the first Bianchi identity does not automatically hold.  
Indeed,
\begin{equation} \label{eqn:1bianchi}
\mathfrak{S}_{X,Y,Z} R(X,Y,Z,W)
    = dH(X,Y,Z,W)
    + (\nabla_W H)(X,Y,Z)
    - \sigma_H(X,Y,Z,W),
\end{equation}
where \(R=[\nabla,\nabla]-\nabla_{[\cdot,\cdot]}\) is the curvature tensor of \(\nabla\), and
\begin{equation} \label{eqn:sigma}
  \sigma_H(X,Y,Z,W)
    = \mathfrak{S}_{X,Y,Z}
      g\big(H(X,Y), H(Z,W)\big).  
\end{equation}

The next fundamental result was established in \cite{IS}.

\begin{thm}[\cite{IS}] \label{thm:IS}
Let \((\bar{M}, \bar{g})\) be a Riemannian manifold endowed with a metric connection \(\bar{\nabla}\) whose torsion is the skew-symmetric \(3\)-form \(\bar{H}\).  
Then the curvature of \(\bar{\nabla}\) satisfies the Riemannian first Bianchi identity if and only if
\begin{equation} \label{eqn:IS}
    d\bar{H} = -2\, \bar{\nabla}\bar{H} = \tfrac{2}{3}\, \sigma_{\bar{H}}.
\end{equation}
\end{thm}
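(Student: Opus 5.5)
The statement is the cited result of \cite{IS}. I would prove it by combining the general first Bianchi formula \eqref{eqn:1bianchi} with two further standard identities for a metric connection $\bar\nabla=\nabla^{\bar g}+\tfrac12\bar H$ with skew torsion. Sign conventions for $\bar\nabla\bar H$ versus $(\bar\nabla_W\bar H)$ have to be matched to those of \eqref{eqn:1bianchi}.

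\emph{Step 1: two identities.}
\begin{enumerate}
\item Since $\bar\nabla$ differs from the torsion-free $\nabla^{\bar g}$ by $\tfrac12\bar H$, I would express $d\bar H$ as the antisymmetrization of $\bar\nabla\bar H$ corrected by torsion terms:
\[
d\bar H(X,Y,Z,W)=\mathfrak{S}_{X,Y,Z}(\bar\nabla_X\bar H)(Y,Z,W)-(\bar\nabla_W\bar H)(X,Y,Z)+2\sigma_{\bar H}(X,Y,Z,W).
\]
\item Expanding $R^{\bar\nabla}=R^{\bar g}+\tfrac12(\bar\nabla\bar H\text{-terms})+\tfrac14(\bar H\bar H\text{-terms})$, and using pair symmetry of $R^{\bar g}$, I would compute the pair-symmetry defect
\[
R(X,Y,Z,W)-R(Z,W,X,Y).
\]
This defect is a linear combination of the components $(\bar\nabla_X\bar H)(Y,Z,W)$, $(\bar\nabla_Y\bar H)(X,Z,W)$, $(\bar\nabla_Z\bar H)(X,Y,W)$ and $(\bar\nabla_W\bar H)(X,Y,Z)$. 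The quadratic terms cancel by symmetry.
\end{enumerate}

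\emph{Step 2: ($\Rightarrow$).} The curvature of a metric connection is skew in $(X,Y)$ and in $(Z,W)$. The classical algebraic lemma then says that the first Bianchi identity forces pair symmetry. By identity 2 of Step 1, pair symmetry implies that $A(W,X,Y,Z):=(\bar\nabla_W\bar H)(X,Y,Z)$ is totally skew, i.e.\ a $4$-form.

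In identity 1 every term in the cyclic sum then equals $-A(W,X,Y,Z)$ up to sign, so $d\bar H=c_1A+2\sigma_{\bar H}$ for a universal constant $c_1$ (namely $\pm4$). The vanishing of \eqref{eqn:1bianchi} gives a second relation $d\bar H=c_2A+\sigma_{\bar H}$. Solving this $2\times 2$ linear system yields $A$ as a multiple of $\sigma_{\bar H}$ ($\tfrac13$ in magnitude), and hence $d\bar H=\tfrac23\sigma_{\bar H}=-2\bar\nabla\bar H$ with the paper's convention.

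\emph{Step 3: ($\Leftarrow$).} Assume $d\bar H=-2\bar\nabla\bar H=\tfrac23\sigma_{\bar H}$. Since $d\bar H$ is a $4$-form, so is $\bar\nabla\bar H$. Substituting into \eqref{eqn:1bianchi} gives the cyclic sum of $R$ as a combination $\alpha\,\sigma_{\bar H}$, and the constants from Step 2 show that $\alpha=0$.

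\emph{Main obstacle.} The main difficulty is bookkeeping: deriving identity 2 cleanly and keeping consistent signs and factors across identity 1 and \eqref{eqn:1bianchi}. I would first fix conventions by checking them on a bi-invariant metric on a compact Lie group with $\bar H(X,Y,Z)=\pm\bar g([X,Y],Z)$. There $\bar\nabla$ is flat (so Bianchi holds), $\bar\nabla\bar H=0$ and $d\bar H=0$, while $\sigma_{\bar H}=0$ by the Jacobi identity. This case is consistent with the claim and pins down the sign pattern.
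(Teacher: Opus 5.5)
Your argument is correct. The paper gives no proof of its own and only cites \cite{IS}, but your argument is the standard one, built from the two ingredients the paper records: the torsion Bianchi formula \eqref{eqn:1bianchi} and Lemma~\ref{lem:symm2}. Your identity 1 is right, and with $A(X,Y,Z,W)=(\bar\nabla_X\bar H)(Y,Z,W)$ the constants come out as $c_1=4$, $c_2=1$.

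There are two soft spots.

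First, the step ``pair symmetry $\Rightarrow$ $\bar\nabla\bar H$ is a $4$-form'' does not follow merely from the defect being a linear combination of components of $\bar\nabla\bar H$. It needs a short algebraic argument. For example, split $\bar\nabla\bar H\in T^*M\otimes\Lambda^3$ into its $\Lambda^4$-part, on which the defect vanishes identically, and the kernel of alternation, on which vanishing of the defect forces the component to be zero. This is exactly the content of Lemma~\ref{lem:symm2}, so you should cite it or prove it.

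Second, your proposed sanity check on a bi-invariant group cannot fix any sign or coefficient. There $\bar\nabla\bar H$, $d\bar H$ and $\sigma_{\bar H}$ all vanish, so both of your linear relations hold trivially. Yet the relative sign matters: a wrong sign for $c_2$ would give $\tfrac15$ instead of $\tfrac13$.

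The clean way to pin the signs down is the Bianchi identity for a connection with torsion,
\[
\mathfrak{S}_{X,Y,Z}\bar R(X,Y)Z=\mathfrak{S}_{X,Y,Z}\big[(\bar\nabla_X\bar H)(Y,Z)+\bar H(\bar H(X,Y),Z)\big].
\]
Pairing with $W$ gives $\mathfrak{S}_{X,Y,Z}\bar R(X,Y,Z,W)=\mathfrak{S}_{X,Y,Z}(\bar\nabla_X\bar H)(Y,Z,W)+\sigma_{\bar H}(X,Y,Z,W)$. Once $\bar\nabla\bar H$ is a $4$-form, this reads $\mathfrak{S}\bar R=3\bar\nabla\bar H+\sigma_{\bar H}$, and your identity 1 becomes $d\bar H=4\bar\nabla\bar H+2\sigma_{\bar H}$. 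Both directions then follow at once: the first Bianchi identity holds iff $\bar\nabla\bar H=-\tfrac13\sigma_{\bar H}$, which is equivalent to $d\bar H=\tfrac23\sigma_{\bar H}=-2\bar\nabla\bar H$.
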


From this theorem, the implication in Theorem~\ref{thm:CKL} from right to left is immediate, in fact, if the torsion is both closed and parallel, then also $\sigma_{\bar{H}}$ vanishes (see, for instance \cite[ Equation (2.4)]{IS}). \par  
For later use, we also recall the following lemma, which clarifies the relation between the symmetries of \(\bar{\nabla}\bar{H}\) and those of the curvature tensor.

\medskip

We introduce the notation
\[
\begin{split}
\mathrm{Sym}\!\left(\Lambda^2 T^*M\right) := \Big\{
T \in (T^*M)^{\otimes 4} \ \Big| \ 
&T(X,Y,Z,W) = -T(Y,X,Z,W) = -T(X,Y,W,Z),\\
&T(X,Y,Z,W) = T(Z,W,X,Y)
\Big\}.
\end{split}
\]

\begin{lem}[\cite{Iv}] \label{lem:symm2}
Let \((\bar{M}, \bar{g})\) be a Riemannian manifold endowed with a metric connection \(\bar{\nabla}\) with skew-symmetric torsion \(\bar{H}\).  
Then the following are equivalent:
\[
(\bar{\nabla}_X \bar{H})(Y,Z,W) = -(\bar{\nabla}_Y \bar{H})(X,Z,W)
    \quad\Longleftrightarrow\quad
    \bar{R}(X,Y,Z,W) = \bar{R}(Z,W,X,Y).
\]
In particular, if \(\bar{\nabla}\) satisfies the first Bianchi identity, then \(\bar{R} \in \mathrm{Sym}\!\left(\Lambda^2 T^*M\right)\), and \(\bar{\nabla}\bar{H}\) may be regarded as a \(4\)-form.
\end{lem}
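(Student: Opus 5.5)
The plan is to reduce the pair symmetry of $\bar R$ to a purely algebraic condition on the tensor $A(X;Y,Z,W):=(\bar\nabla_X\bar H)(Y,Z,W)$, which is skew in its last three slots. The tool is the standard expansion of $\bar R$ for a connection $\bar\nabla=\nabla^{\bar g}+\tfrac12\bar H$. Substituting this into $\bar R=[\bar\nabla,\bar\nabla]-\bar\nabla_{[\cdot,\cdot]}$ and rewriting every $\nabla^{\bar g}\bar H$ as $\bar\nabla\bar H$ plus terms quadratic in $\bar H$, one obtains (see \cite{IS,AFF})
\[
\bar R(X,Y,Z,W)=R^{\bar g}(X,Y,Z,W)+\tfrac12 A(X;Y,Z,W)-\tfrac12 A(Y;X,Z,W)+\tfrac14\,\bar g\big(\bar H(X,Y),\bar H(Z,W)\big)+\tfrac14\,\sigma_{\bar H}(X,Y,Z,W).
\]
I would re-derive this formula only as far as the shape of the quadratic terms. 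The exact coefficients do not matter, provided every quadratic term is built from $\bar g(\bar H(\cdot,\cdot),\bar H(\cdot,\cdot))$ and $\sigma_{\bar H}$.

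The key observation is that all terms other than the $A$-terms are invariant under the exchange $(X,Y)\leftrightarrow(Z,W)$. This holds for $R^{\bar g}$ classically and for $\bar g(\bar H(X,Y),\bar H(Z,W))$ trivially. For $\sigma_{\bar H}$ it holds because it is a $4$-form and the permutation $(X,Y,Z,W)\mapsto(Z,W,X,Y)$ is even. Hence
\[
2\big(\bar R(X,Y,Z,W)-\bar R(Z,W,X,Y)\big)=A(X;Y,Z,W)-A(Y;X,Z,W)-A(Z;W,X,Y)+A(W;Z,X,Y)=:D(X,Y,Z,W).
\]
The lemma then reduces to showing that $D\equiv0$ if and only if $A$ is totally skew-symmetric. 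Note that $A$ is totally skew precisely when it is skew in its first two slots, since it is already skew in the last three.

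For the direction ($\Rightarrow$ on $A$), assume $A$ is totally skew. Then $A(Y;X,Z,W)=-A$ and $A(Z;W,X,Y)=A$, because $(Z,W,X,Y)$ is an even permutation of $(X,Y,Z,W)$. Also $A(W;Z,X,Y)=-A$, because that permutation is odd. Therefore $D=\tfrac12(A+A-A-A)\cdot 2=0$.

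For the converse, assume $D\equiv0$ and set $Z=X$. The second term is $A(Y;X,X,W)=0$ and the fourth term is $A(W;X,X,Y)=0$. Using skewness in the last three slots, $A(X;W,X,Y)=-A(X;Y,X,W)$. Hence $0=D(X,Y,X,W)=2A(X;Y,X,W)$, so $A(X;X,Y,W)=0$ for all $X,Y,W$. Polarizing in $X$ gives skewness in the first two slots, and therefore total skew-symmetry. This proves the equivalence.

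For the final assertion, suppose the first Bianchi identity holds. By Theorem~\ref{thm:IS}, $\bar\nabla\bar H=-\tfrac12 d\bar H$ is a $4$-form, so the equivalence gives $\bar R(X,Y,Z,W)=\bar R(Z,W,X,Y)$. Together with the skewness of $\bar R$ in $X,Y$ (by definition) and in $Z,W$ (by metricity), this shows $\bar R\in\mathrm{Sym}(\Lambda^2T^*M)$.

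I expect the main obstacle to be the bookkeeping in the curvature expansion. Specifically, one must check that converting $\nabla^{\bar g}\bar H$ into $\bar\nabla\bar H$ produces only pair-symmetric quadratic terms, with no leftover non-symmetric contraction of $\bar H\otimes\bar H$. I would verify this directly in an orthonormal frame, writing $\nabla^{\bar g}_XA=\bar\nabla_XA-\tfrac12\bar H(X,\cdot)\cdot A$. The remaining steps are purely algebraic.
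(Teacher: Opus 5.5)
Your proof is correct, and it is essentially the standard argument behind \cite{Iv}; the paper gives no proof of its own, only the citation. You write $2\bigl(\bar R(X,Y,Z,W)-\bar R(Z,W,X,Y)\bigr)$ as the alternating sum $D$ of $\bar\nabla\bar H$-terms, then check (by setting $Z=X$ and polarizing) that $D\equiv0$ exactly when $\bar\nabla\bar H$ is skew in its first two slots. The bookkeeping you flagged as the main obstacle is harmless: $\nabla^{\bar g}\bar H-\bar\nabla\bar H=\tfrac12\sigma_{\bar H}$ is a $4$-form, and every contraction $\bar g\bigl(\bar H(\cdot,\cdot),\bar H(\cdot,\cdot)\bigr)$ of $X,Y,Z,W$ is invariant under $(X,Y)\leftrightarrow(Z,W)$.
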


\medskip

We now turn to the proof of the converse implication in Theorem~\ref{thm:CKL}.

\begin{proof}[Proof of Theorem~\ref{thm:CKL}]
We organize the proof into six steps.

\medskip
\noindent
\textbf{Step 1: \(\nabla d\eta = 0\).}  
\\
Recall that
\[
H = \eta \wedge d\eta + d^\phi F, \qquad \text{and} \qquad \nabla \eta = 0.
\]
For arbitrary vector fields \(X,Y,Z\),
\[
\nabla_X H(Y,Z,\xi) = \big(\eta \wedge \nabla_X d\eta + \nabla_X d^\phi F\big)(Y,Z,\xi) = \nabla_X d\eta(Y,Z),
\]
since, by Remark~\ref{rmk:rmk} (items 4 and 5),
\[
\nabla_X d\eta(\xi,\cdot) = 0, \quad 
\nabla_X d^\phi F(\xi,\cdot,\cdot) = 0.
\]
On the other hand, Theorem~\ref{thm:IS}, Remark~\ref{rmk:rmk} (item 3), and Equation \eqref{eqn:xieta} give
\[
\nabla_X d\eta(Y,Z) = \nabla_X H(Y,Z,\xi) = -\tfrac{1}{2} dH(X,Y,Z,\xi) = -\tfrac{1}{2} \big( d\eta \wedge d\eta + d d^\phi F \big)(X,Y,Z,\xi) = 0.
\]
Hence,
\[
\nabla d\eta = 0.
\]

\medskip
\noindent
\textbf{Step 2: Reduction to \(\nabla^B d^\phi F = 0\).} \\ 
By Step~1,
\[
\nabla H = \nabla d^\phi F \in \Omega^4(M),
\]
and the Reeb vector field \(\xi\) lies in the kernel of \(\nabla d^\phi F\) (Remark~\ref{rmk:rmk} item 5).  
Thus \(\nabla d^\phi F\) defines a horizontal 4-form. \par
By Remark \ref{rmk:rmk2}, for horizontal vector fields \(x,y,z,w\), we have
\begin{equation}\label{eqn:nablaB}
(\nabla_x d^\phi F)(y,z,w) = (\nabla^B_x d^\phi F)(y,z,w),
\end{equation}
where \(\nabla^B\) is the Bismut connection of the transverse Hermitian structure.  
Proving \(\nabla^B d^\phi F = 0\) suffices to conclude \(\nabla H = 0\). Then Theorem~\ref{thm:IS} implies \(dH = 0\).

\medskip
\noindent
\textbf{Step 3: Symmetries of \(R^B\).} \\
By Step~2, \(\nabla^B d^\phi F\) is a horizontal 4-form, so Lemma~\ref{lem:symm2} implies
\[
R^B(x,y,z,w) = R^B(z,w,x,y)
\]
for all horizontal vector fields \(x,y,z,w\).

\medskip
\noindent
\textbf{Step 4: Transverse first Bianchi identity.}  \\
For horizontal vectors \(x,y,z,w\), equation \eqref{eqn:1bianchi} becomes
\begin{equation}\label{eqn:1bianchi*}
0 = (dd^\phi F + d\eta \wedge d\eta)(x,y,z,w) + \nabla^B_w d^\phi F(x,y,z) - \sigma_{d^\phi F}(x,y,z,w) - \tfrac{1}{2} d\eta \wedge d\eta(x,y,z,w),
\end{equation}
where we used equation \eqref{eqn:nablaB} and
\[
\sigma_H(x,y,z,w) = \sigma_{d^\phi F}(x,y,z,w) + \frac{1}{2} d\eta \wedge d\eta(x,y,z,w).
\]
Rewriting \eqref{eqn:1bianchi*} using Equation \eqref{eqn:1bianchi} for the transverse Bismut connection,
\[
0 = \mathfrak{S}_{x,y,z} R^B(x,y,z,w) + \tfrac{1}{2} d\eta \wedge d\eta(x,y,z,w).
\]

\medskip
\noindent
\textbf{Step 5: characterization of \(\nabla^B d^\phi F = 0\).}  \\
We use the curvature characterization of Hermitian structures with parallel Bismut torsion \cite{ZZ2}.  
For horizontal \((1,0)\) vectors \(x,y,z,w\), a Hermitian structure has parallel Bismut torsion if and only if
\begin{align}
& R^B(x,y,z,\overline{w}) = 0, \label{eqn:1}\\
& R^B(x,\overline{y},z,\overline{w}) = R^B(z,\overline{w},x,\overline{y}), \label{eqn:2}\\
& \nabla^B \mathrm{Ric}(Q) = 0, \label{eqn:3}\\
& \mathrm{Ric}(Q)_{\chi \overline{y}} = 0, \label{eqn:4}
\end{align}
where \(2 \chi = \theta^\sharp - i \phi(\theta^\sharp)\), with $\theta^\sharp$ being the Lee field, and
\[
\mathrm{Ric}(Q)_{x\overline{y}} = \sum_{i=1}^n \Big(R^B(x,\overline{y},e_i,\overline{e_i}) - R^B(e_i,\overline{y},x,\overline{e_i})\Big),
\]
for a unitary frame \(\{e_i\}\) of horizontal \((1,0)\) vectors \footnote{We point out that we are considering the extension of $g^T$ over $\C$ to be the bi-linear one.}.

By Step~3, \eqref{eqn:1} and \eqref{eqn:2} hold. Hence we need to prove \eqref{eqn:3} and \eqref{eqn:4}. Observe first that
\begin{equation*}
\begin{split}
\mathrm{Ric}(Q)_{x\overline{y}}
&= \sum_{i=1}^n \Big( R^B(x,\overline{y},e_i,\overline{e_i}) - R^B(e_i,\overline{y},x,\overline{e_i}) \Big)\\[2mm]
&= \sum_{i=1}^n \Big( R^B(x,\overline{y},e_i,\overline{e_i}) + R^B(\overline{y},x,e_i,\overline{e_i}) + R^B(x,e_i,\overline{y},\overline{e_i}) \Big) 
   + \tfrac{1}{2} \sum_{i=1}^n (d\eta \wedge d\eta)(e_i,\overline{y},x,\overline{e_i})\\[1mm]
&= \tfrac{1}{2} \sum_{i=1}^n (d\eta \wedge d\eta)(e_i,\overline{y},x,\overline{e_i})\\
&= -\tfrac{1}{2} \sum_{i=1}^n (d\eta \wedge d\eta)(x,\overline{y},e_i,\overline{e_i}),
\end{split}
\end{equation*}
where the second line uses the transverse first Bianchi identity (Step~3), and the term \(R^B(x,e_i,\overline{y},\overline{e_i})\) vanishes since equation \eqref{eqn:1} holds.

\medskip
Next we verify \eqref{eqn:3}. By Remark~\ref{rmk:rmk2}, for any horizontal \(x,y,z\) we have
\[
0 = \nabla_x d\eta(y,z) = \nabla^B_x d\eta(y,z),
\]
and therefore
\[
\nabla^B_x(d\eta \wedge d\eta) = 0 \quad \text{for all horizontal } x.
\]
Fix a point \(p \in M\) and choose a local unitary frame \(\{e_i\}\) of horizontal \((1,0)\)-vectors that is \(\nabla\)-parallel at \(p\). By Remark~\ref{rmk:rmk2}, we also have \(\nabla^B_x e_i = 0\) at \(p\) for any horizontal \(x\) (similarly for \(\overline{e_i}\)). Therefore, for any horizontal vector field \(z\), we have that at \(p\),
\[
\begin{split}
\nabla^B_z \mathrm{Ric}(Q)(x,\overline{y})
&= -\tfrac{1}{2} \sum_{i=1}^n \nabla^B_z (d\eta \wedge d\eta)(x,\overline{y},e_i,\overline{e_i})= 0.
\end{split}
\]
Hence \(\nabla^B \mathrm{Ric}(Q) = 0\), proving \eqref{eqn:3}.

\medskip
Finally, we check \eqref{eqn:4}. Using the above expression,
\[
\mathrm{Ric}(Q)_{\chi \overline{y}} = -\tfrac{1}{2} \sum_{i=1}^n (d\eta \wedge d\eta)(\chi, \overline{y}, e_i, \overline{e_i}).
\]
Therefore, \(\mathrm{Ric}(Q)_{\chi \overline{y}} = 0\) will follow if \(\chi \in \ker(d\eta)\). We devote the final step to proving this.

\medskip
\noindent
\textbf{Step 6: \(\chi \in \ker(d\eta)\).}  \\
Let \(\{\xi, x_i, \phi x_i\}\) be a \(\phi\)-basis. Without loss of generality, assume that at a point \(p\), \(\nabla x_i = 0\). Define the 1-form (\cite{FI})
\[
\omega^\nabla(X) := -\sum_{i=1}^n H(X,x_i,\phi x_i) = c \eta(X) + \phi \theta(X),
\]
where \(\theta\) is the Lee form of the transverse Hermitian structure and 
\[
c= \frac{1}{2}\sum_{i=1}^{2n} d\eta (\phi x_i, x_i)=\frac{1}{2}g(d\eta, F).
\]

\medskip
Since \(\nabla H\) is a 4-form, at \(p\) we have
\[
\begin{split}
\nabla_X \omega^\nabla(Y)
&= -\sum_{i=1}^n X(H(Y,x_i,\phi x_i)) + \sum_{i=1}^n H(\nabla_X Y, x_i, \phi x_i) \\
&= -\sum_{i=1}^n \nabla_X H(Y,x_i,\phi x_i) \\
&= \sum_{i=1}^n \nabla_Y H(X,x_i,\phi x_i) \\
&= -\nabla_Y \omega^\nabla(X),
\end{split}
\]
so \(\nabla \omega^\nabla\) is a 2-form.

\medskip
Next, compute \(\nabla_X \omega^\nabla(\xi)\):
\[
\nabla_X \omega^\nabla(\xi) = X(c ) = \frac{1}{2} X\, (g(F,d\eta))= \frac{1}{2} g(\nabla_X F,d\eta)+\frac{1}{2} g(F,\nabla_X d\eta)=0,
\]
since both \(F\) and \(d\eta\) are parallel (Step~1).

\medskip
Exploiting that $\omega^\nabla$ is a $2$-form, for any horizontal \(x\),
\[
0 = \nabla_\xi \omega^\nabla(x) = \xi(\phi\theta(x)) - \omega^\nabla(\nabla_\xi x).
\]
Observe that \(\nabla_\xi x\) is horizontal, and
\[
\nabla_\xi x = \nabla_x \xi + [\xi,x] + g^{-1} H(\xi,x) = [\xi,x] + (\iota_x d\eta)^\sharp.
\]
Hence,
\[
0 = \nabla_\xi \omega^\nabla(x) = \xi(\phi\theta(x)) - \phi\theta([\xi,x] + (\iota_x d\eta)^\sharp) = \mathcal{L}_\xi \phi\theta(x) + d\eta((\phi\theta)^\sharp, x).
\]
The first term vanishes because \(\xi\) preserves the transverse Hermitian structure, implying
\[
0 = d\eta((\phi\theta)^\sharp, x) \quad \text{for all horizontal } x,
\]
so \((\phi\theta)^\sharp = \phi(\theta^\sharp) \in \ker(d\eta)\). From equation \eqref{eqn:dn11}, also \(\theta^\sharp \in \ker(d\eta)\). Therefore,
\[
2\chi = \theta^\sharp - i \phi(\theta^\sharp) \in \ker(d\eta),
\]
as desired.
\end{proof}

\section{\texorpdfstring{$\nabla$}{}-flat manifolds}
In this section, we study the flatness condition for the connection $\nabla$ (defined in Theorem \ref{thm:nabla}). The main result of this section is the Sasaki analogue of the Bismut flat case investigated by Wang-Yang-Zheng \cite{WYZ}. 

\begin{defn}
Let $G$ be a Lie group with Lie algebra $\mathfrak{g}$. An almost contact structure $(\phi, \eta, \xi)$ is said to be \emph{left invariant} if $\xi \in \mathfrak{g}$, $\eta \in \mathfrak{g}^*$, and, for any $a \in G$, \, $ L_a \phi = \phi\, L_a,$ where $L_a$ denotes left multiplication by $a$.
\end{defn}

According to Morimoto~\cite{Mor}, any left invariant almost contact structure is equivalent to an almost contact structure defined on the Lie algebra $\mathfrak{g}$. In particular, a left invariant almost contact structure is said to be \emph{normal} if
\[
    N_\phi + d\eta \otimes \xi = 0
\]
on left invariant vector fields.

It was proved in \cite{Mor} that every compact connected Lie group admits a left invariant normal almost contact structure.
 
\begin{ex}  \label{ex:leftinv} (\cite{CoMa})
Let $G$ be a connected, compact Lie group of odd dimension, and let
$\mathfrak g$ be its Lie algebra. Since we will construct a left invariant
normal almost contact structure, it suffices to define it on $\mathfrak g$.

Let $T\subset G$ be a maximal torus with Lie algebra $\mathfrak t$.  Since $G$ has odd dimension, $\dim\mathfrak t$ is odd as well. We choose a system of positive roots
\[
\Delta=\{\alpha_1,\dots,\alpha_m\}\subset\mathfrak t^*.
\]
The complexified Lie algebra then decomposes as
\[
\mathfrak g^\C
   = \mathfrak t^\C 
     \oplus \bigoplus_{\alpha_j\in\Delta}\mathfrak s_{\alpha_j}
     \oplus \bigoplus_{\alpha_j\in\Delta}\mathfrak s_{-\alpha_j},
\]
where
\[
\mathfrak s_{\pm\alpha_j}
   =\{X\in\mathfrak g^\C : [H,X]=\pm 2\pi i\,\alpha_j(H)X,\ \forall H\in\mathfrak t\}.
\]
Each root space is one–dimensional and 
$\mathfrak s_{\alpha_j}=\overline{\mathfrak s_{-\alpha_j}}$.  
Choose $Z_j\in\mathfrak s_{\alpha_j}\setminus\{0\}$, so that
$\overline{Z_j}$ spans $\mathfrak s_{-\alpha_j}$.  Set
\[
X_{2j-1}=\frac{Z_j+\overline{Z_j}}{2},\qquad  
X_{2j}=\frac{\overline{Z_j}-Z_j}{2i},
\]
and define
\[
\mathfrak k := \operatorname{span}_{\R}\{X_1,\dots,X_{2m}\}.
\]
From the above decomposition one obtains the real direct sum
\[
\mathfrak g = \mathfrak t \oplus \mathfrak k.
\]

Let $\xi\in\mathfrak t$ be fixed, and choose a complementary subspace
$\mathfrak h$ so that $\mathfrak t=\R\xi\oplus\mathfrak h$.  
Let $I$ be an almost complex structure on $\mathfrak h$.

Define $\phi\colon\mathfrak g\to\mathfrak g$ by
\[
\phi(\xi)=0,\qquad
\phi|_{\mathfrak h}=I,\qquad
\phi(X_{2j-1})=X_{2j},\qquad
\phi(X_{2j})=-X_{2j-1},
\]
and let $\eta$ be the $1$-form satisfying
\[
\eta(\xi)=1,\qquad \eta|_{\mathfrak h\oplus\mathfrak k}=0.
\]
One checks easily that $(\phi,\xi,\eta)$ is a left invariant almost contact
structure on $G$.

\medskip

We now show that this structure is normal. Consider the product
\[
(G\times\R,\,J),
\]
where $J$ is defined as in \eqref{eqn:cpxstructure}. Writing $\partial_t$ for the coordinate vector field on~$\R$, by the definition of $J$ (see equation \eqref{eqn:cpxstructure}) we have that:
\[
J(\xi)=\partial_t,\qquad
J(\partial_t)=-\xi,\qquad
J|_{\mathfrak{h}}=\phi|_{\mathfrak{h}},\qquad
J|_{\mathfrak{k}}=\phi|_{\mathfrak{k}}.
\]
Since $\partial_t$ spans $\mathrm{Lie}(\R)$ and $\phi$ is left invariant, the tensor $J$ defines a left invariant almost complex structure on $\mathrm{Lie}(G\times\R)=\g\oplus\R$. \par

Its $(1,0)$-subspace is
\[
(\mathfrak g\oplus\R)^{1,0}
=
\operatorname{span}_{\C}\!\left\{
\frac{\xi-i\partial_t}{2},\,
Z_1,\dots,Z_m
\right\}
\;\oplus\;
\mathfrak h^{1,0},
\]
where $\mathfrak h^{1,0}$ denotes the $+i$-eigenspace of $I$.
The usual bracket relations for root vectors, together with the fact that
$\mathfrak h\subset\mathfrak t$ is abelian, imply that this subspace is
involutive.  Therefore $J$ is integrable, and the almost contact structure
$(\phi,\xi,\eta)$ is normal.

\end{ex}
Next, we prove that any left invariant normal almost complex structure on a compact connected Lie group is constructed as in the Example \ref{ex:leftinv}. To prove this claim we will make use of the fundamental result of Pittie \cite{Pit} which shows that any left invariant complex structure on a compact connected Lie group is a Samelson complex structure \cite{Sam}.  \par

\medskip

Let $G$ be a compact Lie group endowed with a left-invariant normal almost contact structure $(\phi, \xi, \eta)$. We decompose $\mathfrak g = \mathrm{Lie}(G)$ as  
\[
\mathfrak g = \R\xi \oplus \ker(\eta),
\]  
where 
\[
\ker(\eta):=\{ X \in \g \ | \ \eta(X)=0\}.
\]
Consider the associated complex structure $J$ on $G \times \mathbb{R}$ (see Equation \eqref{eqn:cpxstructure}). Since $\phi$ is left-invariant, $J$ is left-invariant as well.  Furthermore, we may regard $J$ as a left-invariant complex structure on the compact Lie group $S^1 \times G$,  as $
\mathrm{Lie}(G \times \mathbb{R}) \simeq \mathrm{Lie}(G \times S^1)
$. By the fundamental result of Pittie~\cite{Pit}, any such structure must be a Samelson complex structure, i.e., $J$ is uniquely determined by a maximal torus, an almost complex structure on it, and a choice of positive roots.  

Let $\tilde{\mathfrak{g}} = \mathfrak{g} \oplus \mathfrak{u}(1)$ denote the Lie algebra of $G \times S^1$. Any maximal torus $\tilde{\mathfrak{t}} \subset \tilde{\mathfrak{g}}$ necessarily decomposes as  
\[
\tilde{\mathfrak{t}} = \mathfrak{t} \oplus \mathfrak{u}(1),
\]  
where $\mathfrak{t} \subset \mathfrak{g}$ is a maximal torus in $\mathfrak g$. A choice of a system of positive roots for $\tilde{\mathfrak{g}}$ with respect to $\tilde{\mathfrak{t}}$ may then be regarded as a system of positive roots for $\mathfrak g$ with respect to $\mathfrak{t}$ extended trivially over $\mathfrak{u}(1)$.  \par \medskip
Recall that $J$ on $\tilde \g=\g \oplus \mathfrak{u}(1)$ is defined as follows:
\[
J(X, a \partial_t)=(\phi(X)-a \xi, \eta(X) \partial_t)
\]
for any $X \in \g$ and $a \in \R$. From the definition of $J$ it is immediate to note that 
\[
\ker(\eta)=\{X \in \g \ | \ J X \in \g\},
\]
and that $J=\phi$ on $\ker(\eta)$. \par 
\medskip
Since $\partial_t \in \tilde{\mathfrak{t}}$ and $J$ preserves the maximal torus $\tilde{\mathfrak{t}}$, it follows that  
\[
\xi=-J \partial_t  \in \tilde{\mathfrak{t}} \cap \mathfrak{g} = \mathfrak{t}.
\]  
Define  
\[
\mathfrak{h} := \ker(\eta) \cap \mathfrak{t} = \{ X \in \mathfrak{t} \mid J X \in \mathfrak{t} \},
\]  
so that we obtain the decomposition  
\[
\mathfrak{t} = \R \xi \oplus \mathfrak{h}.
\]  

Since $\mathfrak h \subset \ker(\eta)$, we have $\phi = J$ on $\mathfrak h$. In particular, $\phi$ defines an almost complex structure on $\mathfrak h$.  

Let $\Delta = \{\alpha_1, \dots, \alpha_m\}$ denote the positive roots associated to $J$. For each root $\alpha_j$, choose a nonzero generator $Z_j \in \mathfrak s_{\alpha_j} \subset \mathfrak g^{\mathbb{C}}$, and let $\overline{Z_j}$ denote its complex conjugate. Define the real vectors  
\[
X_{2j-1} = \frac{Z_j + \overline{Z_j}}{2}, \qquad X_{2j} = \frac{\overline{Z_j} - Z_j}{2i}.
\]  
Since $J$ is a Samelson complex structure,  
\[
J X_{2j-1} = X_{2j}, \qquad J X_{2j} = -X_{2j-1}.
\]  

Finally, set  
\[
\mathfrak k = \mathrm{span}\{ X_1, \dots, X_{2m} \} \subset \ker(\eta).
\]  
Then $\phi = J$ on $\mathfrak k$, and $\eta$ is equal to $1$ on $\xi$ and zero on $\mathfrak h$ and $\mathfrak k$. \par

Since \[\mathfrak g = \mathfrak t \oplus \mathfrak k = \mathbb{R}\xi \oplus \mathfrak h \oplus \mathfrak k,
\]  
$(\phi, \eta, \xi)$ is constructed exactly as in Example~\ref{ex:leftinv}.
Hence we have proven the following
\begin{thm} \label{thm:left-inv_acm}
    Let $G$ be a compact Lie group and let $(\phi, \eta, \xi)$ be a left-invariant normal almost contact structure. 
Then $(\phi, \eta, \xi)$ is constructed as in Example~\ref{ex:leftinv}. 
In particular, any left-invariant normal almost contact structure is completely determined by:
\begin{itemize}
    \item a choice of a maximal torus $\mathfrak{t} \subset \mathfrak g$,
    \item a choice of a vector $\xi \in \mathfrak{t}$,
    \item a complementary subspace $\mathfrak{h} \subset \mathfrak t$ such that $\mathfrak t = \mathbb{R}\xi \oplus \mathfrak{h}$,
    \item a choice of an almost complex structure on $\h$,
    \item a choice of a system of positive roots for $\mathfrak g$ with respect to $\mathfrak t$.
\end{itemize}
\end{thm}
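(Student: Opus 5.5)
The plan is to pass from the normal almost contact structure on $G$ to the left-invariant complex structure $J$ of \eqref{eqn:cpxstructure} on $G\times S^1$. We then use Pittie's theorem to identify $J$ with a Samelson structure, and read $(\phi,\eta,\xi)$ off the Samelson data. Normality is exactly integrability of $J$ on $G\times\mathbb R$. Since $\phi$, $\xi$, $\eta$ are left invariant, $J$ is left invariant on $\tilde{\mathfrak g}=\mathfrak g\oplus\mathfrak u(1)$, and so it descends to a left-invariant complex structure on the compact group $G\times S^1$. Pittie \cite{Pit} then gives a maximal torus $\tilde{\mathfrak t}$, a $J$-invariant torus part, and a system of positive roots such that $J$ acts by $+i$ on each positive root space $\mathfrak s_{\alpha}$ and preserves $\tilde{\mathfrak t}$.

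The first key step is to show that $\tilde{\mathfrak t}=\mathfrak t\oplus\mathfrak u(1)$ with $\mathfrak t\subset\mathfrak g$ a maximal torus. This holds because $\mathfrak u(1)$ is central, so it lies in every maximal abelian subalgebra. Equivalently, the roots of $\tilde{\mathfrak g}$ vanish on $\mathfrak u(1)$ and are the roots of $\mathfrak g$ relative to $\mathfrak t$. Next, $J\partial_t=-\xi$ and $\partial_t\in\tilde{\mathfrak t}$, so $J$-invariance of $\tilde{\mathfrak t}$ forces $\xi\in\tilde{\mathfrak t}\cap\mathfrak g=\mathfrak t$.

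Define $\mathfrak h=\ker\eta\cap\mathfrak t$. Because $\eta(\xi)=1$, we get $\mathfrak t=\mathbb R\xi\oplus\mathfrak h$. On $\ker\eta$ we have $J=\phi$, and $J$ preserves $\tilde{\mathfrak t}$, so $\phi$ maps $\mathfrak h$ into $\mathfrak t\cap\ker\eta=\mathfrak h$. Since $\phi^2=-\mathrm{Id}$ on $\ker\eta$, $\phi|_{\mathfrak h}$ is an almost complex structure.

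For the root part, take $Z_j\in\mathfrak s_{\alpha_j}$ and define $X_{2j-1},X_{2j}$ as in Example~\ref{ex:leftinv}. The Samelson condition $JZ_j=iZ_j$ gives $JX_{2j-1}=X_{2j}$ and $JX_{2j}=-X_{2j-1}$. These vectors lie in $\mathfrak g$, and applying $J$ keeps them in $\mathfrak g$. By the characterization $\ker\eta=\{X\in\mathfrak g: JX\in\mathfrak g\}$, $\mathfrak k$ lies in $\ker\eta$, and $\phi=J$ there. Counting dimensions, $\mathfrak g=\mathbb R\xi\oplus\mathfrak h\oplus\mathfrak k$, with $\eta$ vanishing on $\mathfrak h\oplus\mathfrak k$. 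Thus $(\phi,\eta,\xi)$ coincides with the structure built in Example~\ref{ex:leftinv} from the listed data, and each piece of the data determines the structure.

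The main obstacle I expect is the torus step. One has to check that Pittie's maximal torus genuinely splits off the central $\mathfrak u(1)$, and that $J$-invariance of $\tilde{\mathfrak t}$ is part of the Samelson description rather than an extra assumption. I would settle this by recalling that a Samelson structure is defined as $J$ with $\tilde{\mathfrak g}^{1,0}=\tilde{\mathfrak t}^{1,0}\oplus\bigoplus_{\alpha>0}\mathfrak s_\alpha$, which makes $\tilde{\mathfrak t}^{\mathbb C}$ $J$-invariant. Centrality of $\mathfrak u(1)$ then handles the splitting.
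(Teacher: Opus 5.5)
Your proposal is correct and follows the paper's proof essentially step by step. You pass to the left-invariant complex structure $J$ on $G\times S^1$ and invoke Pittie to get a Samelson structure. You then split $\tilde{\mathfrak t}=\mathfrak t\oplus\mathfrak u(1)$, deduce $\xi=-J\partial_t\in\mathfrak t$, and set $\mathfrak h=\ker\eta\cap\mathfrak t$. Finally, you use $\ker\eta=\{X\in\mathfrak g : JX\in\mathfrak g\}$ to place the root vectors $X_{2j-1},X_{2j}$ in $\ker\eta$, with $\phi=J$ there. Your centrality argument for the torus splitting and your check that $\phi(\mathfrak h)\subset\mathfrak h$ simply justify two points the paper states without proof.
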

\begin{rmk}
Assume that $G^{2n+1}$ is a compact Lie group endowed with a left-invariant normal almost contact structure $(\phi, \eta, \xi)$. Then, $(\phi, \eta, \xi)$ is a contact structure, that is $\eta \wedge d\eta^n$ is a volume form, if and only if $\mathrm{rank}(G)=1$, i.e., if and only if $G=\mathrm{SU}(2), \mathrm{SO}(3),\R$. \par
In fact, let $H_i \in \mathfrak{t}$, then 
\[
 d \eta(H_1,H_2)=-\eta([H_1,H_2])=0,
\]
implying that $ \R \xi \subset \mathfrak{t} \subset \ker(d\eta)$. Since $\eta \wedge d\eta^n$ is a volume form if and only if $\R \xi= \ker(d\eta)$, we must have that $ \R \xi = \mathfrak{t}$. 
\end{rmk}
It is a classical result that any even dimensional compact Lie group endowed with a bi-invariant metric $b$ admits a left-invariant complex structure that makes $b$ Hermitian \cite{Sam}. Here we prove an odd-dimensional counterpart of that result.
\begin{prop} \label{prop:bi-inv}
    Let $G$ be a compact Lie group of odd dimension and let $b$ be a bi-invariant metric on $G$. Then, there exists a left-invariant normal almost contact structure $(\phi,\eta,\xi)$ such that $b$ is compatible with $(\phi,\eta,\xi)$.
\end{prop}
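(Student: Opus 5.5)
The plan is to use the construction of Example~\ref{ex:leftinv}, choosing the data so that every decomposition involved is $b$-orthogonal. Fix a maximal torus $\mathfrak t\subset\mathfrak g$ and a system of positive roots $\Delta=\{\alpha_1,\dots,\alpha_m\}$. Since $\dim\mathfrak g$ is odd and $\dim\mathfrak g=\dim\mathfrak t+2m$, the dimension of $\mathfrak t$ is odd.

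First I show that $\mathfrak k$ is $b$-orthogonal to $\mathfrak t$. Because $b$ is bi-invariant, $\mathrm{ad}_H$ is $b$-skew for every $H\in\mathfrak t$. Extend $b$ complex-bilinearly to $\mathfrak g^\C$. For $H\in\mathfrak t$ and $Z\in\mathfrak s_\alpha$ we get
\[
2\pi i\,\alpha(H)\,b(H',Z)=b(H',[H,Z])=-b([H,H'],Z)=0 .
\]
Choosing $H$ with $\alpha(H)\neq0$ gives $\mathfrak s_\alpha\perp\mathfrak t^\C$. The same computation gives $b(\mathfrak s_\alpha,\mathfrak s_\beta)=0$ unless $\alpha+\beta=0$. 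In particular the $\mathfrak s_{\alpha_j}$ are mutually orthogonal, each $\mathfrak s_{\alpha_j}$ is isotropic, and $\mathfrak s_{\alpha_j}\oplus\mathfrak s_{-\alpha_j}$ is orthogonal to the other root planes.

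Next I normalise the root vectors. Write $Z_j=X_{2j-1}-iX_{2j}$, using the notation of Example~\ref{ex:leftinv}. Isotropy $b(Z_j,Z_j)=0$ gives $b(X_{2j-1},X_{2j-1})=b(X_{2j},X_{2j})$ and $b(X_{2j-1},X_{2j})=0$. Rescaling $Z_j$ by a real constant, $\{X_1,\dots,X_{2m}\}$ becomes a $b$-orthonormal basis of $\mathfrak k$, and $\phi|_{\mathfrak k}$ (which sends $X_{2j-1}\mapsto X_{2j}$ and $X_{2j}\mapsto -X_{2j-1}$) is then a $b$-orthogonal complex structure.

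On $\mathfrak t$, choose $\xi\in\mathfrak t$ with $b(\xi,\xi)=1$ and set $\mathfrak h=\xi^{\perp}\cap\mathfrak t$. This space is even-dimensional, so it carries a $b$-orthogonal almost complex structure $I$; take for instance an orthonormal basis and pair its vectors. With these choices Example~\ref{ex:leftinv} gives a left-invariant normal almost contact structure $(\phi,\eta,\xi)$. It remains to check compatibility:
\[
b(\phi X,\phi Y)=b(X,Y)-\eta(X)\eta(Y).
\]
The decomposition $\mathfrak g=\R\xi\oplus\mathfrak h\oplus\mathfrak k$ is $b$-orthogonal, and $\phi$ preserves each summand. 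On $\mathfrak h$ and on $\mathfrak k$ the map $\phi$ is a $b$-isometry, and $\phi\xi=0$. Moreover $\eta=b(\xi,\cdot)$, because $\eta$ vanishes on $\mathfrak h\oplus\mathfrak k=\xi^\perp$ and $\eta(\xi)=1=b(\xi,\xi)$. The identity therefore holds on each summand and, by orthogonality, on all of $\mathfrak g$. Left-invariance of $b$ and of the structure then extends it to all of $G$.

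The main point to check is the orthogonality and isotropy of the root spaces, which is where bi-invariance is used. This is the same mechanism as in Samelson's even-dimensional result. Normality itself needs no new work, since it is already established in Example~\ref{ex:leftinv} for any choice of $\xi$, $\mathfrak h$ and $I$.
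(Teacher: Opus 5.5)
Your proposal is correct and follows essentially the same route as the paper. Both arguments build a $b$-orthogonal splitting $\mathfrak g=\R\xi\oplus\mathfrak h\oplus\mathfrak k$ from a maximal torus and its root decomposition, normalise the root vectors so that their real and imaginary parts form orthonormal pairs, pair up an orthonormal basis of $\xi^\perp\cap\mathfrak t$, and take normality from the construction of Example~\ref{ex:leftinv}. You also prove the orthogonality and isotropy of the root spaces via $\mathrm{ad}$-skewness, which the paper only asserts, and your citation of Example~\ref{ex:leftinv} for normality is more apt than the paper's appeal to Theorem~\ref{thm:left-inv_acm}, which is the converse statement.
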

\begin{proof}
Let $G$ be a compact Lie group of odd dimension $\dim G = 2n+1$, and let $b$ be a bi-invariant metric on $G$.  
Let $T \subset G$ be a maximal torus with Lie algebra $\mathfrak{t}$.  
Choose a system of positive roots $\Delta = \{\alpha_1, \dots, \alpha_m\}$. 

The complexified Lie algebra $\mathfrak{g}^\C$ splits as
\[
\mathfrak{g}^\C = \mathfrak{t}^\C \oplus \bigoplus_{\alpha \in \Delta} (\mathfrak{s}_\alpha \oplus \mathfrak{s}_{-\alpha}).
\]

Extend $b$ to a $\C$-bilinear form on $\mathfrak{g}^\C$. Then we have the following orthogonality relations:
\[
b(\mathfrak{t}^\C, \mathfrak{s}_{\pm \alpha}) = 0, \quad
b(\mathfrak{s}_\alpha, \mathfrak{s}_\beta) = 0 \text{ if } \alpha \neq -\beta, \quad
b(\mathfrak{s}_\alpha, \mathfrak{s}_{-\alpha}) \neq 0.
\]

For each positive root $\alpha_j$, choose a non-zero generator $Z_j \in \mathfrak{s}_{\alpha_j}$ such that
\[
b(Z_j, \overline{Z}_j) = 1.
\]

Define
\[
X_j := \frac{1}{\sqrt{2}} (Z_j + \overline{Z}_j), \quad
Y_j := \frac{1}{\sqrt{2}i} (\overline{Z}_j - Z_j).
\]

Then $\{X_j, Y_j\}$ is an orthonormal basis of $\mathfrak{k}_j := \mathfrak{s}_{\alpha_j} \oplus \mathfrak{s}_{-\alpha_j}$ with respect to $b$.  
Set
\[
\mathfrak{k} := \mathrm{span}\{X_1, Y_1, \dots, X_m, Y_m\},
\]
so that we have an orthogonal decomposition
\[
\mathfrak{g} = \mathfrak{t} \oplus \mathfrak{k}.
\]

Let $\{H_0, H_1, \dots, H_{2n-2m}\}$ be an orthonormal basis of $\mathfrak{t}$, and set
\[
\xi := H_0, \quad \eta := H_0^\flat.
\]
Then
\[
\{\xi, H_1, \dots, H_{2n-2m}, X_1, Y_1, \dots, X_m, Y_m\}
\]
is an orthonormal basis of $\mathfrak{g}$ with respect to $b$.

Define the endomorphism $\phi$ by
\[
\phi(\xi) = 0, \quad \phi(H_{2i-1}) = H_{2i}, \quad \phi(H_{2i}) = -H_{2i-1}, \quad i = 1, \dots, n-m,
\]
\[
\phi(X_j) = Y_j, \quad \phi(Y_j) = -X_j, \quad j = 1, \dots, m.
\]

By Theorem \ref{thm:left-inv_acm}, $(\phi, \xi, \eta)$ is a normal left-invariant almost contact structure on $G$, and $b$ is compatible with $(\phi, \xi, \eta)$, that is, 
\[
b(\phi X, \phi Y) = b(X, Y) - \eta(X) \eta(Y), \quad \forall X, Y \in \mathfrak{g}.
\]

\end{proof}

\begin{rmk} \label{rmk:3}
Let $G$ be a simply connected odd-dimensional Lie group endowed with a bi-invariant metric $b$. 
By a classical result of Milnor~\cite{Mil}, one has a decomposition
\[
(G,g) = (G' \times \mathbb{R}^k,\; b' + g_E),
\]
where $G'$ is simply connected, compact and semisimple, $b'$ is a bi-invariant metric on $G'$, and $g_E$ denotes the Euclidean metric on~$\mathbb{R}^k$. 
Since 
\[
\mathrm{Lie}(G' \times \mathbb{R}^k) = \mathrm{Lie}(G' \times T^k),
\]
the product $G' \times T^k$ can be equipped with the same bi-invariant metric $b = b' + g_E$.

Because $G' \times T^k$ is compact, Proposition~\ref{prop:bi-inv} ensures the existence of a left-invariant normal almost contact structure $(\phi,\xi,\eta)$ such that $(\phi,\xi, \eta, b)$ is a normal almost contact metric structure. 
As $(\phi,\xi, \eta, b)$ is left-invariant, it is completely determined by its values on the Lie algebra, and therefore it is defined naturally on $G' \times \mathbb{R}^k$.

We conclude that any simply connected odd-dimensional Lie group endowed with a bi-invariant metric $g$ admits a normal left-invariant almost contact structure $(\phi,\xi,\eta)$ for which $(\phi,\xi,\eta, b)$ is a normal almost contact metric structure.
\end{rmk}

Let $G$ be a odd dimensional Lie group endowed with a left-invariant normal almost contact metric structure $(\phi,\xi,b)$ such that $b$ is bi-invariant. 
 In particular, since $\xi$ is left invariant, $\xi$ is Killing. The Friderich-Ivanov connection $\nabla$ is therefore well defined. We now make the expression of $\nabla$ explicit.  \par
Let $\overline{\nabla}$ be the connection for which $\overline{\nabla}X=0$ for any left-invariant vector field $X$. From this, it is immediate to observe that $\overline{\nabla}b=\overline{\nabla}\phi=\overline{\nabla}\xi=0$. On left invariant vector fields
\[
b(T^{\overline{\nabla}}(X,Y),Z)=-b([X,Y],Z) \in \Lambda^3(\g^*). 
\]
By uniqueness, $\nabla=\overline{\nabla}$ and $H(X,Y,Z)=-b([X,Y],Z)$ on left-invariant vector fields. \par
Since $H$ is a bi-invariant $3$-form, it follows that $dH=0$. Also, using that the left-invariant vector fields are parallel we have that the curvature of $\nabla$ is identically zero.  \par
\medskip
\medskip
The aim of this section is to characterize compact manifolds that are $\nabla$-flat.  
While the Riemannian structure of such manifolds is well understood \cite{CS1,CS2,AF}, our goal here is to study the 
compatible normal almost contact structure.

\medskip

Let $G$ be a simply connected odd-dimensional Lie group endowed with a left-invariant normal almost contact metric structure $(\phi,\xi,b)$ such that $b$ is bi-invariant.  
As we have already observed, the associated Friedrich-Ivanov connection $\nabla$ is flat.

\medskip

By a result of Milnor~\cite{Mil}, we can write
\[
(G,b) \cong (G' \times \mathbb{R}^k, \; b' + g_E),
\]
where $G'$ is simply connected, compact, and semisimple, $b'$ is bi-invariant on $G'$, and $g_E$ denotes the Euclidean metric on $\mathbb{R}^k$.  

Consider a group homomorphism
\begin{equation} \label{eqn:rho}
\rho: \mathbb{Z}^k \to \mathrm{Isom}(G',b')
\end{equation}
with associated $\mathbb{Z}^k$-action on $G = G' \times \mathbb{R}^k$ defined, for any $n \in \mathbb{Z}^k$, by
\begin{equation} \label{eqn:zaction}
n \cdot (p,t) = (\rho(n) p, t+n).
\end{equation}

By construction, the action \eqref{eqn:zaction} is by isometries of $b$.  
Therefore, if it also preserves $\xi$ and $\phi$, the quotient manifold $
M = G / \mathbb{Z}^k $ 
inherits a normal almost contact metric structure $(\phi,\xi,b)$ for which the connection $\nabla$ is flat.

\medskip

Inspired by the analogous result in~\cite{WYZ} (the first part of the proof is simplified here due to Theorem~\ref{thm:CKL} for $\nabla$, which was not yet known for the Bismut connection when \cite{WYZ} was written), we obtain the following characterization:

\begin{thm} \label{thm:flat}
Let \((M, \phi, \xi, g, \nabla)\) be a compact Sasaki with torsion manifold.  
Then the connection $\nabla$ is flat if and only if, up to a finite cover, $M$ can be written as
\[
M = G / \mathbb{Z}^k,
\]
where $G$ is a simply connected odd-dimensional Lie group equipped with a left-invariant normal almost contact metric structure whose metric is bi-invariant.  
Furthermore, the action of $\mathbb{Z}^k$ on $G$, given as in \eqref{eqn:zaction} for some homomorphism $\rho$ as in \eqref{eqn:rho}, preserves the left-invariant almost contact structure on $G$. 
\end{thm}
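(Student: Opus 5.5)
The "if" direction follows from the discussion preceding the statement. On $G$ with a left-invariant normal almost contact metric structure and bi-invariant metric $b$, the Friedrich--Ivanov connection $\nabla$ is the connection for which left-invariant fields are parallel, so it is flat with torsion $H(X,Y,Z)=-b([X,Y],Z)$. If the $\mathbb Z^k$-action \eqref{eqn:zaction} preserves $\phi$ and $\xi$, then it preserves $\nabla$ by the uniqueness in Theorem~\ref{thm:nabla}. Hence $\nabla$ descends to a flat connection on $G/\mathbb Z^k$, and flatness is clearly unaffected by passing to a finite cover.

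For the converse, assume $\nabla$ is flat. Then $R\equiv0$ and in particular the first Bianchi identity holds, so Theorem~\ref{thm:CKL} gives $dH=0$ and $\nabla H=0$. Thus $\nabla$ is a flat metric connection with closed, parallel skew torsion. This is exactly the setting of Cartan--Schouten (see \cite{CS1,CS2,AF}): the universal cover $(\tilde M,\tilde g)$ is isometric to a simply connected Lie group $G$ with bi-invariant metric $b$, and under this identification $\nabla$ becomes the flat connection whose parallel fields are the left-invariant ones (or the right-invariant ones; after composing with inversion we may take left). Since $\nabla\phi=0$ and $\nabla\xi=0$, the lifted $\phi$ and $\xi$ are parallel, hence left-invariant. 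The lifted normality condition holds pointwise, so it holds on left-invariant fields. We therefore obtain a left-invariant normal almost contact metric structure on $G$ with bi-invariant metric, whose Friedrich--Ivanov connection is the left-invariant flat connection. By Milnor \cite{Mil}, $G=G'\times\mathbb R^k$ with $G'$ compact, simply connected and semisimple.

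It remains to describe $\pi_1(M)$. The deck group $\Gamma$ acts by isometries of $b$ preserving $\nabla$, $\phi$ and $\xi$. The isometry group of $(G'\times\mathbb R^k,b'+g_E)$ is $\mathrm{Isom}(G',b')\times\mathrm{Isom}(\mathbb R^k)$, and the first factor is compact. Following \cite{WYZ}, the projection of $\Gamma$ to $\mathrm{Isom}(\mathbb R^k)$ should be discrete and cocompact, because $M$ is compact and $G'$ is compact. By Bieberbach's theorem it then has a finite-index subgroup of translations $\mathbb Z^k$. The corresponding finite-index subgroup $\Gamma_0\subset\Gamma$ acts as $n\cdot(p,t)=(\rho(n)p,\,t+n)$ after a linear change of coordinates on $\mathbb R^k$. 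This gives the finite cover $G/\Gamma_0$, and $\Gamma_0$ preserves the structure because $\Gamma$ does.

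The main obstacle is this last step: showing that $\rho$ is a well-defined homomorphism into $\mathrm{Isom}(G')$, that the kernel of the projection to $\mathrm{Isom}(\mathbb R^k)$ is finite and can be removed, and that the translation parts form a lattice. I would handle it as in \cite{WYZ}. The kernel consists of isometries of $G'\times\{t\}$ acting freely and properly, hence is a finite group since $G'$ is compact, and passing to a torsion-free finite-index subgroup (Selberg) disposes of it. The secondary subtlety is the left/right choice in the Cartan--Schouten identification, which I would fix using the sign of $H$.
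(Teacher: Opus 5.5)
Your proof is correct and follows the same route as the paper. Flatness gives the first Bianchi identity, so Theorem~\ref{thm:CKL} makes $H$ closed and $\nabla$-parallel. The Cartan--Schouten identification of the universal cover with a Lie group carrying a bi-invariant metric and left-invariant $\phi,\xi$ then gives the structure on the cover; the paper carries this out by hand, using a local $\nabla$-parallel $\phi$-frame whose brackets have constant coefficients $-H$. For the deck group both you and the paper rely on the argument of \cite{WYZ}. Your sketch of that step supplies more detail than the paper: the splitting of $\mathrm{Isom}(G'\times\mathbb{R}^k)$, the finite kernel of the projection to $\mathrm{Isom}(\mathbb{R}^k)$, Bieberbach, and passing to a torsion-free finite-index subgroup.
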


\begin{proof}
Only the implication from left to right remains to be proved, as the reverse direction has already been addressed. \par
Around each point $p \in M$ there exists a \emph{local} parallel orthonormal frame
\[
\{\xi, e_1, \phi(e_1), \dots, e_n, \phi(e_n)\},
\]
such that the vector fields $e_i, \phi(e_i)$ are horizontal. 
This is a standard consequence of the flatness of $\nabla$ and the parallelism conditions $\nabla \xi = \nabla \phi = 0$\footnote{While the Reeb vector field $\xi$ is globally defined on $M$, the vector fields $e_i, \phi(e_i)$ are, in general, only local.}. \par

Since $\nabla e_i = \nabla \phi(e_i) = \nabla \xi = 0$ in this local frame, for any local vector fields $X,Y,Z \in \{\xi, e_i, \phi(e_i)\}$ we have
\begin{equation} \label{eqn:gH_local}
g([X,Y],Z) = -H(X,Y,Z),
\end{equation}
where $H$ is the torsion of $\nabla$. 

By Theorem~\ref{thm:CKL}, since $\nabla$ is flat, the torsion $H$ is parallel. 
Hence, for any local vector field $W$, we have
\begin{equation} \label{eqn:torsion_constant_local}
W \big(H(X,Y,Z)\big) = \nabla_W H(X,Y,Z) = 0,
\end{equation}
showing that $H(X,Y,Z)$ is constant within the domain of the chosen frame. \par

Furthermore, for such local vector fields, the complete skew-symmetry of the torsion $H$ implies
\begin{equation} \label{eqn:biinv_local}
g([X,Y],Z) = -H(X,Y,Z) = -H(Y,Z,X) = g([Y,Z],X) = g(X,[Y,Z]).
\end{equation}
This relation shows that $g$ behaves like a bi-invariant metric on the local Lie algebra generated by the frame $\{\xi, e_1, \phi(e_1), \dots, e_n, \phi(e_n)\}$. \par

Equations~\eqref{eqn:gH_local}, \eqref{eqn:torsion_constant_local}, and \eqref{eqn:biinv_local} imply that, these vector fields are the left-invariant vector fields of a local Lie group endowed with a left-invariant normal almost contact metric structure and a bi-invariant metric. 

Lifting the metric and the almost contact structure to the universal cover $\tilde M$, we obtain that $\tilde M$ is a simply-connected odd-dimensional Lie group equipped with a left-invariant normal almost contact metric structure whose metric is bi-invariant. \par
The final part of the proof follows by the same argument as in \cite[Theorem 1]{WYZ}.
\end{proof}

\section{\texorpdfstring{$\nabla$}{}- Einstein manifolds}

Let \((M, \phi, \xi, g, \nabla)\) be a Sasaki with torsion manifold and let $R^\nabla$ be the curvature tensor of $\nabla$.

Since $\nabla$ is a metric connection preserving the tensor $\phi$, its curvature satisfies
\[
R^\nabla \in \Lambda^2(T^*M)\otimes \Lambda^{1,1}(\mathcal{F}^\perp_\xi).
\]

The presence of the endomorphism $\phi$ naturally induces more contractions of $R^\nabla$. The first one is the Ricci tensor of $\nabla$, defined by 
\[
\mathrm{Ric}^\nabla(X,Y)=\sum_{i=1}^{2n+1} R^\nabla(e_i,X,Y,e_i),
\]
where $\{e_i\}$ is a local orthonormal frame. It is well known that
\[
\mathrm{Ric}^\nabla(X,Y)
= \mathrm{Ric}^g(X,Y) - \frac{1}{4} H^2(X,Y) - \frac{1}{2} (\delta H)(X,Y),
\]
where $H^2(X,Y)=g(\iota_X H,\iota_Y H)$ and $\delta$ denotes the codifferential with respect to $g$.

A second contraction, intrinsically associated with $\phi$, is given by
\[
\rho^\nabla(X,Y)=\frac{1}{2}\sum_{i=1}^{2n+1} R^\nabla(X,Y,e_i,\phi e_i).
\]

The tensor $\rho^\nabla$ plays the role of a $\phi$-Ricci form. In particular, its vanishing imposes a strong restriction on the holonomy of $\nabla$: in fact, $\rho^\nabla=0$ if and only if $\mathrm{Hol}(\nabla)\subseteq \mathrm{SU}(n).$

\medskip 
The following lemma will be useful for subsequent considerations.

\begin{lem} \label{lem:curvature}
Let \((M, \phi, \xi, g, \nabla)\) be a Sasaki with torsion manifold. Then, for any horizontal vector fields $x,y,z,w$ the following identities hold:
\begin{align}
 R^\nabla(x,y,z,w)=&  \ R^B(x,y,z,w)+d\eta(x,y)\cdot d\eta(z,w);\\
 R^\nabla(\xi,x,y,z)=& -\nabla_x d\eta(y,z). 
\end{align}
\end{lem}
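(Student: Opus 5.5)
The plan is to compute both curvature components directly from $R=[\nabla,\nabla]-\nabla_{[\cdot,\cdot]}$. Both sides of each identity are tensorial, so I will evaluate them on \emph{basic} horizontal vector fields $x,y,z,w$, i.e.\ local horizontal fields projecting to vector fields on the local leaf space of $\mathcal{F}_\xi$. Such fields exist near any point by Proposition~\ref{prop: local model} and satisfy $[\xi,x]=0$. Two facts will be used throughout.

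\begin{itemize}
\item By Remark~\ref{rmk:rmk2}, $\nabla_x y=\nabla^B_x y$. This field is again basic, because $\xi$ preserves the transverse Hermitian structure (Remark~\ref{rmk:rmk}, item 7), so $\nabla^B$ commutes with the flow of $\xi$.
\item As in Step~6 of the proof of Theorem~\ref{thm:CKL}, for horizontal $z$ we have $\nabla_\xi z=[\xi,z]+(\iota_z d\eta)^\sharp$. For basic $z$ this reduces to $\nabla_\xi z=A(z)$, where $A(z):=(\iota_z d\eta)^\sharp$, so that $g(A z,w)=d\eta(z,w)$.
\end{itemize}

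\textbf{First identity.} Split the bracket as $[x,y]=[x,y]^h+\eta([x,y])\xi$. Here $\eta([x,y])=-d\eta(x,y)$, with the convention $d\eta(x,y)=x\eta(y)-y\eta(x)-\eta([x,y])$; the normalization of $d\eta$ must be checked against the one implicit in \eqref{eqn:nabla2}. Then
\[
R^\nabla(x,y)z=\nabla^B_x\nabla^B_y z-\nabla^B_y\nabla^B_x z-\nabla^B_{[x,y]^h}z+d\eta(x,y)\,\nabla_\xi z .
\]
The first three terms give exactly the transverse Bismut curvature $R^B(x,y)z$. For basic fields, $[x,y]^h$ is the horizontal lift of the bracket on the leaf space, so this is precisely the curvature of $\nabla^B$ there. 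The last term is $d\eta(x,y)A(z)$. Pairing with $w$ gives $R^\nabla(x,y,z,w)=R^B(x,y,z,w)+d\eta(x,y)\,d\eta(z,w)$.

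\textbf{Second identity.} Since $[\xi,x]=0$,
\[
R^\nabla(\xi,x)y=\nabla_\xi\nabla_x y-\nabla_x\nabla_\xi y=A(\nabla_x y)-\nabla_x(Ay).
\]
Here we used that $\nabla_x y$ is basic. Pairing with $z$ and using $\nabla g=0$, we get
\[
d\eta(\nabla_x y,z)-x\bigl(d\eta(y,z)\bigr)+d\eta(y,\nabla_x z)=-(\nabla_x d\eta)(y,z).
\]
Note that $\nabla_x z$ is horizontal and $\iota_\xi d\eta=0$, so no vertical terms appear.

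The main obstacle is bookkeeping rather than conceptual. One has to fix consistently the sign and normalization conventions for $d\eta$, for $H(\xi,x,\cdot)=d\eta(x,\cdot)$ in the formula for $\nabla_\xi x$, and for the placement of the fourth slot in $R(X,Y,Z,W)=g(R(X,Y)Z,W)$. One also has to justify that $R^B$, computed on the leaf space, agrees with the expression obtained using $[x,y]^h$. I would first verify the formula $\nabla_\xi z=[\xi,z]+(\iota_z d\eta)^\sharp$ directly from \eqref{eqn:nabla2} with these conventions, since every sign in the final identities traces back to it.
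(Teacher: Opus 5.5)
Your proposal is correct. For the second identity it is essentially the paper's argument. The paper keeps arbitrary horizontal fields, uses $\nabla_\xi y=[\xi,y]+(\iota_y d\eta)^\sharp$, and then discards the bracket terms because they assemble into $(\mathcal L_\xi\nabla)(x,y,z)=0$. Choosing basic fields simply removes those terms from the start; your observation that $\nabla_x y$ is again basic is exactly $\mathcal L_\xi\nabla=0$.

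For the first identity your route is genuinely different. The paper starts from the general formula expressing $R^\nabla$ through $R^g$, $\nabla H$ and three quadratic torsion terms, and substitutes $H=\eta\wedge d\eta+d^\phi F$. It then applies O'Neill's formulas to $R^g$ on horizontal vectors. The $d\eta\cdot d\eta$ contributions from the torsion and from the O'Neill tensor must be collected and shown to sum to $d\eta(x,y)\,d\eta(z,w)$. Meanwhile the $d^\phi F$ terms recombine with the transverse Levi-Civita curvature into $R^B$.

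Your computation avoids both $R^g$ and O'Neill. It uses only $\nabla=\nabla^B$ on horizontal fields and the formula for $\nabla_\xi$. It also shows where the correction term comes from: the vertical part $-d\eta(x,y)\xi$ of $[x,y]$, acted on by $\nabla_\xi=(\iota_{\cdot}\,d\eta)^\sharp$ on basic fields. So both identities are handled by one mechanism, with much lighter sign bookkeeping. The paper's route, in exchange, passes through standard relations with $R^g$ that are reusable elsewhere.

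Two small points are worth stating explicitly.
\begin{itemize}
\item The lifts $\partial_{u_i}-f_i\partial_s$ from Proposition~\ref{prop: local model} commute with $\xi$ only because $\iota_\xi d\eta=0$ (Lemma~\ref{lem:blair}) forces $\partial_s f_i=0$.
\item The formula $\nabla_\xi z=[\xi,z]+(\iota_z d\eta)^\sharp$ follows at once from \eqref{eqn:nabla2}. The torsion satisfies $g(T(X,Y),Z)=H(X,Y,Z)$, and $\nabla\xi=0$, $\iota_\xi H=d\eta$. This confirms the signs you use.
\end{itemize}
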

\begin{proof}
The curvature tensor $R^{\nabla}$ can be expressed as follows: 
\begin{align*}
  R^{\nabla}(X,Y,Z,W)&=R^g(X,Y,Z,W)+\frac{1}{2} {\nabla}_X {H} (Y,Z,W) -\frac{1}{2} {\nabla}_Y {H} (X,Z,W) \\& +\frac{1}{2} g({H}(X,Y), {H}(Z,W)) +\frac{1}{4} g({H}(Y,Z), {H}(X,W))+\frac{1}{4} g({H}(Z,X), {H}(Y,W)),
\end{align*}
where $R^g$ is the curvature of the Levi-Civita connection. Using the formula above, the fact that $x,y,z,w$ are horizontal vector fields, and the O'Neill identities relating the curvature tensors $R^g$ and $R^{g^T}$ (see, for instance \cite[Theorem~2.5.16]{BG}), we obtain:
\begin{align*}
  R^{{\nabla}}(x,y,z,w)&=R^g(x,y,z,w)+\frac{1}{2} 
  \nabla^B_x {d^\phi F} (y,z,w) -\frac{1}{2} \nabla^B_y {d^\phi F} (x,z,w) \\& +\frac{1}{2} g^T(d^\phi F(x,y), d^\phi F(z,w)) +\frac{1}{4} g^T(d^\phi F(y,z), d^\phi F(x,w))+\frac{1}{4} g^T(d^\phi F(z,x), d^\phi F(y,w))\\&
  +\frac{1}{2} d\eta(x,y) \cdot d\eta(z,w)+\frac{1}{4} d\eta(x,w) \cdot d\eta(y,z)+\frac{1}{4} d\eta(z,x) \cdot d\eta(y,w)\\&
  +\frac{1}{2} d\eta (x,y) \cdot d\eta(z,w) +\frac{1}{4} d\eta (x,z) \cdot d\eta(y,w)-\frac{1}{4} d\eta(x,w) d\eta(y,z)\\
  &= R^B(x,y,z,w) + d\eta (x,y) \cdot d\eta(z,w). 
\end{align*}
\medskip
We now prove the second statement of the lemma. Using the fact that $\mathcal{L}_\xi \nabla=0$, we have
\begin{align*}
R^\nabla(\xi,x,y,z)=&\ g(\nabla_\xi \nabla_x y-\nabla_x \nabla_\xi y-\nabla_{[\xi,x]} y,z)\\
=&\ g([\xi, \nabla_x y],z)-g(\nabla_x [\xi,y],z)-g(\nabla_{[\xi,x]}y,z)+d\eta( \nabla_x y, z)-(\nabla_x \iota_y d\eta) z\\
=&\ \mathcal{L}_\xi \nabla (x,y,z)-\nabla_x d\eta (y,z)\\
=&-\nabla_x d\eta (y,z).
\end{align*}
This follows from the fact that $\nabla$ is entirely determined by $g$, $\phi$, and $\eta$, and $\xi$ satisfies
\[
\mathcal{L}_\xi g = \mathcal{L}_\xi \phi = \mathcal{L}_\xi \eta = 0.
\]
\end{proof}

Using the lemma above, one can show that $\rho$ can be expressed as follows:
\begin{prop} \label{prop:rhoB}
Let $(M,\phi,\xi,g, \nabla)$ be a Sasaki with torsion manifold. Then \[\rho^\nabla = \rho^B - d(c \, \eta)\]
is a closed $2$-form. Moreover, $\rho^\nabla = 0$ if and only if, the function $c$, defined by 
\[
c := \tfrac12\, g(d\eta, F),
\]
is constant and
\[
c\, d\eta = \rho^B,
\]
where $\rho^B$ denotes the Bismut Ricci form of the transverse Hermitian structure.\label{thm:BHE}
\end{prop}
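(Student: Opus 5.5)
We compute $\rho^\nabla(X,Y)=\tfrac12\sum_i R^\nabla(X,Y,e_i,\phi e_i)$ in a $\phi$-basis $\{\xi,x_1,\phi x_1,\dots,x_n,\phi x_n\}$. The $\xi$-term drops because $\phi\xi=0$, so only horizontal $e_i$ contribute. The computation splits according to whether the first two slots are horizontal or contain $\xi$, and in each case we use Lemma~\ref{lem:curvature}.

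\emph{Both arguments horizontal.} For horizontal $x,y$ the first identity of Lemma~\ref{lem:curvature} gives
\[
\rho^\nabla(x,y)=\rho^B(x,y)+\tfrac12 d\eta(x,y)\sum_i d\eta(e_i,\phi e_i).
\]
Here the sum is over the full horizontal orthonormal frame, and it equals $\mp 2c$ depending on the convention for $F(X,Y)=g(X,\phi Y)$. We fix signs so that this term is $-c\,d\eta(x,y)$. On horizontal vectors we also have $d(c\eta)(x,y)=dc\wedge\eta(x,y)+c\,d\eta(x,y)=c\,d\eta(x,y)$. Hence $\rho^\nabla$ and $\rho^B-d(c\eta)$ agree on horizontal pairs. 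Note that $\rho^B$ is defined via $R^B$ on the transverse structure. By Remark~\ref{rmk:rmk2}, $\nabla$ and $\nabla^B$ coincide on horizontal fields, so $\rho^B$ is a basic horizontal form on $M$.

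\emph{Mixed arguments.} By the second identity of Lemma~\ref{lem:curvature},
\[
\rho^\nabla(\xi,x)=-\tfrac12\sum_i(\nabla_x d\eta)(e_i,\phi e_i).
\]
Since $\nabla F=0$ and $\nabla$ is metric, this equals $-\tfrac12\, x\big(g(d\eta,F)\big)$ up to the same normalization, that is $-x(c)$. On the other side, $\rho^B(\xi,x)=0$ and $d(c\eta)(\xi,x)=\xi(c)\eta(x)-x(c)\eta(\xi)-c\,\eta([\xi,x])=-x(c)$. For the last equality we use $\xi(c)=0$ ($c$ is $\xi$-invariant since $\mathcal L_\xi g=\mathcal L_\xi\phi=\mathcal L_\xi\eta=0$), $\eta(x)=0$, and item 6 of Remark~\ref{rmk:rmk}. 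So $-d(c\eta)(\xi,x)=x(c)$. Matching this against $\rho^\nabla(\xi,x)$ is where the sign bookkeeping must be consistent with the horizontal case. This bookkeeping is the main obstacle: the conventions for $F$, for $\phi$ acting on forms, and the factor $\tfrac12$ in $c$ must all be fixed once and checked in both cases. Finally, $\rho^\nabla(\xi,\xi)=0$ trivially.

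\emph{Closedness and the characterization.} Closedness of $\rho^\nabla=\rho^B-d(c\eta)$ reduces to $d\rho^B=0$. This holds because $\rho^B$ is basic and is, transversally, the Bismut Ricci form, which is locally $d$ of the connection form on the canonical bundle, hence closed. One can also argue directly: $\rho^\nabla$ is the curvature of the connection induced by $\nabla$ on the complex line bundle $\Lambda^{n,0}(\ker\eta)$, so it is closed.

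For the equivalence, $\rho^\nabla=0$ means $\rho^B=d(c\eta)=dc\wedge\eta+c\,d\eta$. Contracting with $\xi$ gives $\iota_\xi\rho^B=0$ on the left and $-dc$ on the right, since $\xi(c)=0$ and $\iota_\xi d\eta=0$. Hence $dc=0$, so $c$ is constant ($M$ is connected), and then $\rho^B=c\,d\eta$. Conversely, if $c$ is constant and $\rho^B=c\,d\eta$, then $d(c\eta)=c\,d\eta=\rho^B$, so $\rho^\nabla=0$.
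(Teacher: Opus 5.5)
Your strategy is the paper's: evaluate $\rho^\nabla$ in a $\phi$-basis, use the first identity of Lemma~\ref{lem:curvature} on horizontal pairs and the second on pairs $(\xi,x)$, and move $\nabla_x$ outside the trace using $\nabla g=\nabla\phi=0$. As written, however, the mixed case does not close, and that is the one step carrying content. You obtain $\rho^\nabla(\xi,x)=-x(c)$ and $\bigl(\rho^B-d(c\eta)\bigr)(\xi,x)=x(c)$, which contradict each other, and you defer the discrepancy to ``sign bookkeeping''. The sign is not a convention you can choose separately in each case. With $F(X,Y)=g(X,\phi Y)$ and $c=\tfrac12 g(d\eta,F)$ fixed, both components are governed by the same trace, and in the horizontal case you already used $\sum_{i=1}^{2n}d\eta(e_i,\phi e_i)=-2c$. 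Applying the same identity gives
\[
\rho^\nabla(\xi,x)=-\tfrac12\sum_i(\nabla_x d\eta)(e_i,\phi e_i)=-\tfrac12\,x\Bigl(\sum_i d\eta(e_i,\phi e_i)\Bigr)=x(c)=dc(x),
\]
which matches $-d(c\eta)(\xi,x)=x(c)$. Your intermediate expression $-\tfrac12\,x\bigl(g(d\eta,F)\bigr)$ has the wrong sign. In fact $-\tfrac12\sum_i(\nabla_x d\eta)(e_i,\phi e_i)=\tfrac12\sum_i(\nabla_x d\eta)(\phi e_i,e_i)=\tfrac12\,g(\nabla_x d\eta,F)$, which is exactly the paper's computation.

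This cannot be left as bookkeeping. With your sign you would get $\rho^\nabla=\rho^B-c\,d\eta+dc\wedge\eta$, whose differential (given $d\rho^B=0$) is $-2\,dc\wedge d\eta\neq 0$ in general, so both the formula and the closedness claim would fail.

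Once the mixed term is corrected, the rest of your argument is fine and somewhat more explicit than the paper. Closedness follows from $d\rho^B=0$, or from $\rho^\nabla$ being, up to a constant, the curvature of the connection induced by $\nabla$ on $\Lambda^{n,0}(\ker\eta)$. Contracting $\rho^B=dc\wedge\eta+c\,d\eta$ with $\xi$ gives $dc=0$. One small correction: $\rho^B$ is basic because $\xi$ preserves the transverse Hermitian structure (Remark~\ref{rmk:rmk}), not because of Remark~\ref{rmk:rmk2}.
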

\begin{proof}
The proof of this theorem is a direct consequence of Lemma \ref{lem:curvature}. First, notice that to simplify the computations, we may take $\{e_i\}=\{\xi, x_i, \phi x_i\}$ as a $\phi$-basis. Since $\phi \xi = 0$, we have
\[
\rho^\nabla(X,Y) = \frac{1}{2} \sum_{i=1}^{2n} R^\nabla (X,Y,x_i, \phi x_i) = 0.
\]

Using the first assertion of Lemma \ref{lem:curvature}, for any horizontal vector fields $x$ and $y$, we get
\[
\rho^\nabla(x,y) = \frac{1}{2} \sum_{i=1}^{2n} R^\nabla (x,y,x_i, \phi x_i) = \rho^B(x,y) - c \, d\eta(x,y). 
\]

Using the second assertion instead, we obtain
\begin{align*}
\rho^\nabla(\xi,x) =& \frac{1}{2} \sum_{i=1}^{2n} R^\nabla (\xi,x,x_i, \phi x_i) = - \frac{1}{2} \sum_{i=1}^{2n} \nabla_x d\eta (x_i, \phi x_i) \\=
&\frac{1}{2} g(\nabla_x d\eta, F) = \frac{1}{2} x \big(g(\nabla_x d\eta, F)\big) = dc(x). 
\end{align*}

Therefore, 
\[
\rho^\nabla = \rho^B - d(c \, \eta).
\]

The theorem then follows immediately.
\end{proof}

\begin{rmk}
Proposition \ref{thm:BHE} provides a convenient expression for $\rho^\nabla$, which can be used to construct examples of Sasaki with torsion manifolds whose holonomy is contained in $\mathrm{SU}(n)$, in the spirit of \cite{GGP}.

Let $(N,J,\omega)$ be a $4$-dimensional Kähler--Einstein manifold with positive Einstein constant $\lambda$, normalized so that $\lambda=1$. In particular, $[\frac{\omega}{2\pi}] \in H^2(N,\mathbb{Z})$. Let $c= \pm \sqrt{2}$, and set
\[
\alpha := \frac{c}{2}\,\omega.
\]

Setting $k = \frac{c}{2}$, we have
\[
\frac{\alpha}{2\pi k} = \frac{\omega}{2\pi},
\]
hence $\left[\frac{\alpha}{2\pi k}\right] \in H^2(N,\mathbb{Z})$. Therefore, there exists a principal $S^1$-bundle $\pi:M\to N$ with a connection $1$-form $\eta$ such that
\[
d\eta = \alpha = \frac{c}{2}\,\omega,
\]
where $S^1 = \mathbb{R}/(2\pi k)\mathbb{Z}$ is the circle with rescaled period. \par

The induced Sasaki structure with torsion on $M$ satisfies
\[
\rho^\nabla = \rho^{\mathrm{LC}} - c\, d\eta 
= \omega - \frac{c^2}{2}\,\omega = 0.
\]

This construction applies, for instance, to $\mathbb{CP}^1 \times \mathbb{CP}^1$, $\mathbb{CP}^2$, and to the blow-up of $\mathbb{CP}^2$ at $r$ points, for $3 \le r \le 8$. In these cases, the total space $M$ is diffeomorphic to $S^2 \times S^3$, $S^5$, and $\#_r(S^2 \times S^3)$, respectively.
\end{rmk}

\medskip

We now introduce the following notion.
\begin{defn}
    We say that a Sasaki with torsion structure $(M,\varphi,\xi,\eta,g)$ is strong if the torsion form of $\nabla$, given by $H=\eta\wedge d\eta+ d^{\varphi}F$, is closed.
\end{defn}

\begin{defn}
Let $(M,\phi,\xi,g, \nabla)$ be a strong Sasaki with torsion manifold. We say that $(M,\phi,\xi,g)$ is \emph{$\nabla$-Einstein} if $\rho^\nabla=0$.
\end{defn}

The latter condition is particularly relevant in the context of string compactifications~\cite{PT}, and also for its applications in Hermitian geometry, which motivate the terminology, as explained in the following remark.
 \begin{rmk} \label{rmk:oddeven}
Let $(M,\phi,\xi,g,\nabla)$ be a $\nabla$-Einstein manifold. We recall that
\[
H = \eta \wedge d\eta + d^\phi F.
\]
The product $M \times \mathbb{R}$ carries a natural Hermitian structure defined by
\[
g_t = dt^2 \oplus g, \qquad J(\xi) = \partial_t, \qquad J(\partial_t) = -\xi, \qquad J|_{\ker(\eta)} = \phi.
\]
The associated fundamental form $\omega = g_t(\cdot, J\cdot)$ is given by
\[
\omega = dt \wedge \eta + F.
\]
A direct computation shows that the torsion of the associated Bismut connection is
\[
d^c \omega = J(-dt \wedge d\eta + dF) = \eta \wedge d\eta + d^\phi F = H,
\]
and hence $dH = 0$.

Moreover, the Bismut connection of $(g_t,J)$ splits as $\nabla^{\mathrm{LC}}_{\mathbb{R}} \oplus \nabla$, where $\nabla^{\mathrm{LC}}_{\mathbb{R}}$ denotes the Levi--Civita connection of the flat metric on $\mathbb{R}$. In particular, its curvature reduces to that of $\nabla$. Since $\rho^\nabla = 0$, it follows that the Bismut connection of $(g_t,J)$ also has vanishing Bismut Ricci form. Therefore, $(g_t,J)$ is Bismut--Hermite--Einstein.

The same argument applies if $\mathbb{R}$ is replaced by $S^1$, with $\partial_t$ replaced by the standard Reeb vector field on $S^1$.
\end{rmk}
In view of Proposition \ref{thm:BHE}, we have that a Sasaskian with torsion manifold $(M,\phi,\xi,g, \nabla)$ is $\nabla$-Einstein if and only if, setting
\[
c := \tfrac12\, g(d\eta, F),
\]
the following conditions are fulfilled
\begin{enumerate}
    \item the function $c$ is constant,
    \item $c\, d\eta = \rho^B,$ where $\rho^B$ denotes the Bismut Ricci form of the transverse Hermitian structure,
    \item $dH=d\eta \wedge d\eta+ dd^\phi F=0$.
\end{enumerate}
Note that the equality $c\, d\eta = \rho^B,$ implies that $c^2=b=\text{const}$, where $b$ is the Bismut scalar curvature, that is, $b=\tfrac{1}{2} g(\rho^B, \omega)$. \par

\medskip
In dimension 3, the vanishing of $\rho^\nabla$ already implies the vanishing of the full curvature tensor, since $\nabla \xi=0$. Consequently, by Theorem \ref{thm:flat}, the manifold is, up to its universal cover, either $\R^3$ or $\mathrm{SU}(2)$.

Here, we present two examples of non-compact $\nabla$-Hermitian-Einstein structures. These are local examples defined on suitable open subsets of $\mathbb{R}^5$ and $\mathbb{R} \times \mathbb{C}^2  \times \mathbb{C}$, respectively. We then turn our attention to the compact case, focusing in particular on the five dimensional case.

\begin{ex} \label{ex:noncompactBHE}
Let $\mathbb{R}^5$ be endowed with coordinates $(x_1,y_1,x_2,y_2,z)$, and define the vector fields
\[
X_1 = \partial_{x_1} + \frac{y_1}{2}\,\partial_z, \quad 
X_2 = \partial_{x_2} - \frac{y_2}{2}\,\partial_z, \quad 
Y_1 = \partial_{y_1} - \frac{x_1}{2}\,\partial_z, \quad 
Y_2 = \partial_{y_2} + \frac{x_2}{2}\,\partial_z.
\]
Consider the $1$-form
\[
\eta = dz + \frac{1}{2}\bigl(x_1 dy_1 - x_2 dy_2 - y_1 dx_1 + y_2 dx_2\bigr),
\]
and define a normal almost contact structure $(\phi,\xi,\eta)$ by
\[
\xi = \partial_z, \qquad \phi(\partial_z) = 0, \qquad 
\phi(X_i) = Y_i, \qquad \phi(Y_i) = -X_i.
\]
Let $k>0$, and define a metric $g$ on
\[
B_k=\{(x_1,y_1,x_2,y_2,z)\in \mathbb{R}^5 \mid x_1^2+y_1^2+x_2^2+y_2^2<4k\}
\]
by
\[
g = \eta^2 + u \sum_{i=1}^2 \bigl((dx_i)^2 + (dy_i)^2\bigr),
\]
where
\[
u = k - \frac{1}{4}(x_1^2 + y_1^2 + x_2^2 + y_2^2).
\]
Then $(\phi,\xi,\eta,g)$ defines a normal almost contact metric structure on $B_k$, and the Reeb vector field $\xi$ is Killing. Hence the structure is Sasaki with torsion, with fundamental $2$-form
\[
F = u \cdot(dy_1 \wedge dx_1 + dy_2 \wedge dx_2) = u\cdot F_E.
\]
We claim that this structure is $\nabla$-Hermitian-Einstein. First, observe that
\[
d\eta = dx_1 \wedge dy_1 - dx_2 \wedge dy_2,
\]
and therefore $g(d\eta,F)=0$, which implies $c=0$.

Let $(g^T,F) = (u\,g_E, u\,F_E)$ denote the transverse Hermitian structure. Since $(g^T,F)$ is conformal to the flat K\"ahler structure $(g_E,F_E)$ and the Bismut Ricci form $\rho^B$ is conformally invariant in real dimension $4$, it follows that $\rho^B = 0$. Hence $\rho^\nabla = 0$ by Proposition~\ref{thm:BHE}.

It remains to check that $dH = 0$. A direct computation gives
\begin{align*}
dH &= d\eta \wedge d\eta + dd^\phi F \\
&= d\eta \wedge d\eta + (dd^\phi u)\wedge F_E \\
&= -2\vol_E - \sum_{i} (\partial_{x_i}^2u+\partial_{y_i}^2u )\vol_E  \\     &=(-2+2)\vol_E=0.
\end{align*}

Finally, the above structure is not $\nabla$-flat and, in particular, does not have $\nabla$-parallel torsion. Indeed, both of the claims above follow since $d\eta$ has non-constant norm.  \par

\end{ex}
The next example is constructed similarly to the previous one, so we only sketch the details. 
\begin{ex}
We consider \(\mathbb{C}^2 \times \mathbb{C}\). Let \(x,y\) be the coordinates on \(\mathbb{C}^2\), and let \(z\) be the coordinate on \(\mathbb{C}\).
Let
\[
g = u \,(dx \otimes d\bar{x} + dy \otimes d\bar{y}) + \gamma \otimes \bar{\gamma}
\]
be a Hermitian metric on \(\mathbb{C}^2 \times \mathbb{C}\), where \(\gamma := dz + \bar{x} \,dy\) and $u$ is a positive function.

The associated fundamental form \(F\) is given by
\[
F = -iu \,(dx \wedge d\bar{x} + dy \wedge d\bar{y}) - i\,\gamma \wedge \bar{\gamma}.
\]

We show that \(\rho^B = 0\). This manifold can be viewed as a \(\mathbb{C}\)-bundle over \(\mathbb{C}^2\) with connection forms
\[
\alpha = \frac{\gamma + \bar{\gamma}}{2},
\qquad
\beta = \frac{\gamma - \bar{\gamma}}{2i}.
\]
From this perspective, the metric can be written as
\[
g = \pi^*(u\, g_E) + \gamma \otimes \bar{\gamma},
\]
where \(\pi\) denotes the projection onto \(\mathbb{C}^2\), and \(g_E\) is the Euclidean metric on \(\mathbb{C}^2\).

Following \cite{GGP}, we have
\[
\rho^B = \pi^* \rho^B(u \, g_E) - d(c_\alpha \,\alpha) -d(c_\beta \,\beta),
\]
where \(\rho^B(u \, g_E)\) is the Bismut Ricci form of \(u \, g_E\), and \(c_\alpha\), \(c_\beta\) denote the traces of \(d\alpha\), \(d\beta\) with respect to \( u\, \omega_E\).

Since \(d\gamma = -dy \wedge d\bar{x}\), it follows that both \(c_\alpha\) and \(c_\beta\) vanish. Moreover, since \(u \,g_E\) is conformal to the Euclidean metric, we have \(\rho^B(u\,g_E) = 0\). Hence, \(\rho^B = 0\).

We use this observation to construct new examples of \(\nabla\)-Einstein manifolds in dimension \(7\). \par \medskip

Let \(k>0\) and define
\[
B_k := \{(t,x,y,z) \in \mathbb{R} \times \mathbb{C}^2 \times \mathbb{C} \ | \ |x|^2 + |y|^2 < k \}.
\]
Set
\[
u = k - (|x|^2 + |y|^2),
\qquad
\gamma = dz - \bar{x}\,dy.
\]

We define a Sasaki with torsion structure \((\phi,\xi,g,\nabla)\) on \(B_k\) as follows. Let
\[
\eta = dt + \frac{i}{2}\big(x \, d\bar{x} - \bar{x}\, dx + y\, d\bar{y} - \bar{y}\, dy\big),
\qquad
\xi = \partial_t,
\]
and
\[
g = \eta^2 + u\,(dx \otimes d\bar{x} + dy \otimes d\bar{y}) + \gamma \otimes \bar{\gamma}.
\]
Note that $u>0$ on $B_k$, so \(g\) is positive definite.

The endomorphism \(\phi\) is defined with respect to the local frame
\[
\{\partial_t,\ \partial_x,\ \bar{x}\partial_z+\partial_y,\ \partial_z,\ \partial_{\bar{x}},\ x \partial_{\bar{z}}+\partial_{\bar{y}},\ \partial_{\bar{z}} \}
\]
by
\[
\phi(\partial_t)=0, \qquad
\phi(\partial_x)=i\partial_x, \qquad
\phi(\partial_z)=i\partial_z, \qquad
\phi(\bar{x}\partial_z+\partial_y)=i(\bar{x}\partial_z+\partial_y),
\]
and extended by complex conjugation. We claim that this structure is \(\nabla\)-Einstein with $c=0$.

The associated fundamental form \(F\) and the differential of \(\eta\) are given by
\[
F = -iu\,(dx \wedge d\bar{x} + dy \wedge d\bar{y}) - i\,\gamma \wedge \bar{\gamma},
\]
\[
d\eta = i\, (dx \wedge d\bar{x} + dy \wedge d\bar{y}).
\]
From this, it follows that \(c = \frac{1}{2} g(d\eta, F) = 0\).
Moreover, by the argument at the beginning of the example, \(\rho^B = 0\).

Therefore, to conclude that the structure is \(\nabla\)-Einstein, it remains to verify that the torsion is closed, namely
\[
d\eta \wedge d\eta + dd^\phi F = 0.
\]
Using \(dd^\phi F = 2i\,\partial\bar{\partial}F\), a straightforward computation gives
\[
d\eta \wedge d\eta + 2i\, \partial\bar{\partial}F
= (2 + u_{x\bar{x}} + u_{y\bar{y}})\, dx \wedge d\bar{x} \wedge dy \wedge d\bar{y}.
\]
Since \(u_{x\bar{x}} + u_{y\bar{y}} = -2\), the claim follows. \par
Also this example, as the previous one, is non-flat and with non-parallel torsion. For instance, these could be easily checked by observing that $d\eta$ has non-constant norm. 

\end{ex}

Observe that if we cross the two examples above with $\R$, the resulting products inherit induced non-compact Bismut--Hermite--Einstein structures (see Remark \ref{rmk:oddeven}).
\medskip
\subsection{Compact \texorpdfstring{$\nabla$}{}-Einstein manifolds in dimension 5}
In the compact case, it was recently proved in \cite{KS} that for a compact $\nabla$-Einstein manifold $(M,\phi,\xi,g)$, there exists a unique normalized function $f$ such that the vector field $V := \theta^\sharp - \mathrm{grad} f$ is $\nabla$-parallel, where we recall that $\theta$ is identified with the transverse Lee form. When non-zero, this vector field $V$ lies in $\mathcal{F}^\perp_\xi$ \cite{KS}.
 
\begin{prop} \label{prop:Vneq0}
Let $(M,\phi,\xi,g)$ be a compact $5$-dimensional $\nabla$-Einstein manifold. If $V \neq 0$ then $\nabla$ is flat, and so $(M,\phi,\xi,g)$ is constructed as in Theorem \ref{thm:flat}. 
\end{prop}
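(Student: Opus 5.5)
The idea is to use the two $\nabla$-parallel vector fields $\xi$ and $V$ to produce a global $\nabla$-parallel frame, which forces the holonomy to be trivial.

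By \cite{KS}, $V\neq 0$ is $\nabla$-parallel and horizontal. Since $\nabla\phi=0$ and $\nabla\xi=0$, the field $\phi V$ is also $\nabla$-parallel and horizontal. Because $\nabla$ is metric, $|V|$ is a nonzero constant, and $V\perp\phi V$. Hence $\{\xi, V/|V|, \phi V/|V|\}$ is a parallel orthonormal triple.

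In dimension $5$ the horizontal distribution has rank $4$. Let $D$ be the orthogonal complement in $\ker\eta$ of $\mathrm{span}\{V,\phi V\}$. It is a $\phi$-invariant rank-$2$ subbundle, preserved by $\nabla$, since $\nabla$ is metric and preserves $\phi$ and the span of $V,\phi V$. So the holonomy of $\nabla$ lies in $\mathrm{U}(1)$, acting on $D$ and trivially on $\mathrm{span}\{\xi,V,\phi V\}$. On the other hand, $\rho^\nabla=0$ means $\mathrm{Hol}(\nabla)\subseteq \mathrm{SU}(2)$ on $\ker\eta$. The $\mathrm{U}(1)$ acting on $D\cong\C$ has determinant equal to its action on $D$, so the $\mathrm{SU}(2)$ condition forces this $\mathrm{U}(1)$ to be trivial.

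A direct computation confirms this. For a unit $e\in D$, we have $\rho^\nabla(X,Y)=R^\nabla(X,Y,V,\phi V)/|V|^2+R^\nabla(X,Y,e,\phi e)$. The first term vanishes because $V$ and $\phi V$ are parallel. Therefore $R^\nabla(X,Y,e,\phi e)=0$, and since $R^\nabla(X,Y)$ is a skew endomorphism commuting with $\phi$ on the $2$-plane $D$, it is a multiple of $\phi|_D$, which we have just shown to be zero. Together with $R^\nabla(X,Y)$ annihilating $\xi$, $V$ and $\phi V$, this gives $R^\nabla\equiv 0$.

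Finally we invoke Theorem~\ref{thm:flat}, which applies since $M$ is compact and Sasaki with torsion, to get the description of $M$. The only steps needing care are that $\phi V$ is horizontal and nonzero, which holds since $V$ is horizontal, and the use of $\rho^\nabla=0$ through the trace of the curvature on $\ker\eta$. I expect no real obstacle beyond verifying that $V$ is horizontal, which is quoted from \cite{KS}.
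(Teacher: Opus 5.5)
Your proposal is correct and is essentially the paper's argument. Because $\xi$, $V$ and $\phi V$ are $\nabla$-parallel, they lie in the kernel of every $R^\nabla(X,Y)\in\Lambda^{1,1}_\phi$, so the only surviving component is $R^\nabla(X,Y,e,\phi e)$. This equals $\rho^\nabla(X,Y)=0$, and your normalization $\rho^\nabla(X,Y)=R^\nabla(X,Y,V,\phi V)/|V|^2+R^\nabla(X,Y,e,\phi e)$ is the right one. Your holonomy phrasing ($\mathrm{U}(1)$ on $D$ forced into $\mathrm{SU}(2)$) is a repackaging of the same computation; read it at the level of the holonomy algebra, which is all that flatness requires.
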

\begin{proof}
We have already observed that, for any $X,Y$, 
\[
R^\nabla(X,Y) \in \Lambda^2(T^*M) \otimes \Lambda^{1,1}_\phi(\mathcal{F}^\perp_\xi).
\] 
Furthermore, since $V$ and $\phi V$  are parallel, they lie in the kernel of $R^\nabla(X,Y)$, i.e.,
\[
V, \phi V \in \ker \big(R^\nabla(X,Y)\big).
\] 
It follows that the only potentially non-zero component of $R^\nabla(X,Y)$ is 
\[
R^\nabla(X,Y)(\alpha, \phi \alpha),
\] 
where $\alpha$ and $\phi \alpha$ are chosen such that $\{\alpha, \phi \alpha, V, \phi V\}$ forms a basis of $\mathcal{F}^\perp_\xi$.  
But we have
\[
R^\nabla(X,Y)(\alpha, \phi \alpha) = R^\nabla(X,Y,\alpha, \phi \alpha) = \rho(X,Y) = 0,
\]
which concludes the proof.

\end{proof}
We now turn our attention to the case $V = 0$. In this case, $\theta = df$, so the transverse geometry is in general conformally balanced. In dimension $4$, the balanced and K\"ahler conditions coincide, so the transverse geometry is  conformally K\"ahler.

Before proving the main Theorem, we exhibit the following two remarks.
\begin{rmk} \label{rmk:fbasic}
Let $(M,\phi,\xi,g)$ be a compact $5$-dimensional $\nabla$-Einstein manifold. If $V = 0$, and hence $\theta=df$, then $\mathcal{L}_\xi f=0$. \par
First note that $\delta^{T} \theta=\delta\theta$, where $\delta^{T}$ is the codifferential with respect to $g^T$: this can be easily proven by taking a $\phi$-basis, in fact
\[
\delta\theta=-\nabla^{\mathrm{LC}}_{\xi} \theta (\xi)- \nabla^{\mathrm{LC}}_{x_{i}}\theta (x_i)- \nabla^{\mathrm{LC}}_{\phi x_{i}}\theta (\phi x_i)=- \nabla^{\mathrm{LC}}_{x_{i}}\theta (x_i)- \nabla^{\mathrm{LC}}_{\phi x_{i}}\theta (\phi x_i)=\delta^{T}\theta,
\]
where the last equality follows since $\theta(\xi)=0$, as $\theta$ is the Lee form of the transverse Hermitian structure, and $\nabla^{\mathrm{LC}}_\xi \xi=0$. 
Now, using that $\xi$ is Killing we get
\[
0=\mathcal{L}_\xi g (dd^\phi F, F \wedge F)=\mathcal{L}_\xi (g^T (dd^\phi F, F \wedge F))=2  \mathcal{L}_\xi (\delta^T \theta)= 2 \mathcal{L}_\xi (\delta \theta)=-2 \mathcal{L}_\xi (\Delta f)=-2 \Delta (\mathcal{L}_\xi f).
\]
Since $M$ is compact, this gives $\mathcal{L}_\xi f=const$, which in turn implies that $\mathcal{L}_\xi f=0$, again by the compactness of $M$.
\end{rmk}
\begin{rmk} \label{rmk:c=0}
    Observe that $c=0$ implies
\[
\rho^\nabla=\rho^B=0,
\]
so the transverse geometry is \emph{Calabi Yau with torsion} and $d\eta$ is primitive. Then
\[
\int_{M} \frac{|d\eta|^2}{2} vol=\int_{M} \frac{|d\eta|^2}{2} \eta \wedge vol^T=-\int_{M} \eta \wedge d\eta \wedge d\eta= \int_{M} \eta \wedge dd^\phi F= \int_{M} d(\eta \wedge d^\phi F)=0,
\]
implying that $d \eta=0$. In this case, $M$ is locally $\R \times N$ where $N$ is a $4$-dimensional Bismut-Hermite--Einstein manifold. Since $M$ admits a nowhere vanishing closed $1$-form $\eta$, by Tishler Theorem \cite{Tis}, $M$ has the structure of a mapping torus of a compact $4$-dimensional Bismut-Hermite--Einstein manifold. 
Therefore, in the following we will always assume $c \neq 0$. Observe that the splitting above is no longer true in the non-compact case when $c=0$ (see Example \ref{ex:noncompactBHE}). \medskip

\end{rmk}
\begin{thm} \label{thm:caracterization}
Let $(M,\phi,\xi,g)$ be a compact $5$-dimensional $\nabla$-Einstein manifold satisfying $c \neq 0$. If $V = 0$, and hence $\theta=df$, then the transverse metric $g^T$ is conformal to a K\"ahler metric $(\tilde g^T, \phi|_{\mathcal{F}^\perp_\xi})$ satisfying:
\begin{enumerate}
    \item The scalar curvature of $\tilde g^T$ is positive and given by
    \[
    \tilde{s}^T = 2 c^2 \cdot e^f > 0.
    \]
    \item The Ricci form $\tilde \rho^T$ of $(\tilde g^T, \phi|_{\mathcal{F}^\perp_\xi})$ satisfies
    \begin{equation} \label{eqn:4dim}
        \tilde \rho^T \wedge \tilde \rho^T + \frac{1}{2} dd^\phi (\tilde s^T) \wedge \tilde \omega = 0.
    \end{equation}
\end{enumerate}

Conversely, let $(N, \tilde g^T, J, \tilde \omega^T)$ be a $4$-dimensional K\"ahler orbifold such that:
\begin{enumerate}
    \item The scalar curvature $\tilde{s}^T$ is strictly positive.
    \item The Ricci form $\tilde \rho^T$ satisfies
    \begin{equation} \label{eqn:dim41}
        \tilde \rho^T \wedge \tilde \rho^T + \frac{1}{2} dd^J (\tilde s^T) \wedge \tilde \omega = 0.
    \end{equation}
\end{enumerate}
Suppose that $M$ is a smooth $S^1$-orbibundle over $N$ 
with connection $1$-form $\eta$ such that
\[
d\eta = c^{-1} \tilde \rho^T
\]
for some non--zero constant $c \in \mathbb{R}^*$. Then $M$ admits a $\nabla$-Einstein structure.
\end{thm}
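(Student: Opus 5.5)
We argue entirely on the transverse $4$-dimensional Hermitian structure $(g^T,\phi|_{\mathcal F^\perp_\xi},F)$, which is basic. By Remark~\ref{rmk:fbasic} the function $f$ is basic, so $\theta=df$ is a basic exact $1$-form. In real dimension $4$ the Lee form satisfies $dF=\theta\wedge F$. Hence $\tilde g^T:=e^{-f}g^T$, with $\tilde\omega=e^{-f}F$, satisfies $d\tilde\omega=e^{-f}(dF-df\wedge F)=0$, so it is a transverse Kähler metric. The sign of the exponent is fixed by the normalization $\theta=df$, and we adjust conventions if needed.

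Next we use the standard conformal change formula for the Bismut Ricci form in dimension $4$ to express the $\nabla$-Einstein conditions in terms of $\tilde g^T$:
\[
\rho^B=\tilde\rho^T-\tfrac12 dd^\phi f
\]
(up to a convention-dependent constant). The two defining conditions become
\[
c\,d\eta=\tilde\rho^T-\tfrac12 dd^\phi f,\qquad d\eta\wedge d\eta+dd^\phi F=0 .
\]
For the second, we compute $dd^\phi F=dd^\phi(e^f\tilde\omega)=dd^\phi(e^f)\wedge\tilde\omega$, using $d\tilde\omega=0$ and that $d^\phi$ acts as $J d J^{-1}$ on basic forms. To obtain item 1, we wedge the first equation with $\tilde\omega$, or equivalently contract it with $F$. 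Since $c=\tfrac12 g(d\eta,F)$ and $g(\alpha,F)=e^{-f}\tilde g(\alpha,\tilde\omega)$ for $2$-forms, this yields $2c^2=e^{-f}\big(\tilde s^T/2\big)+(\text{Laplacian term in }f)$. To kill the Laplacian term we use $dH=0$ contracted against $F\wedge F$: it says $|d\eta|^2$-type terms balance $\Delta e^{f}$. Combining these two scalar identities, we expect the clean relation $\tilde s^T=2c^2e^f$. Since $c\neq0$, this gives positivity.

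Once item 1 holds, $dd^\phi f$ is expressed through $dd^\phi\tilde s^T$ (since $f=\log(\tilde s^T/2c^2)$). We substitute $c\,d\eta=\tilde\rho^T-\tfrac12dd^\phi f$ into $c^2\,(d\eta\wedge d\eta+dd^\phi F)=0$, with $F=e^f\tilde\omega=\tfrac{\tilde s^T}{2c^2}\tilde\omega$. The cross terms $\tilde\rho^T\wedge dd^\phi f$ and $dd^\phi f\wedge dd^\phi f$ should cancel against pieces of $dd^\phi\tilde s^T\wedge\tilde\omega$, using Kähler identities in dimension $4$ and $d\tilde\rho^T=0$. The outcome is \eqref{eqn:4dim}. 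This algebraic bookkeeping, together with the derivation of item 1, is the main obstacle. We plan to handle it by writing everything in terms of $u=\tilde s^T$ and checking the top-degree coefficient, which is a scalar identity. Consistency requires $c\,d\eta=\tilde\rho^T-\tfrac12 dd^\phi\log u$ to be closed, which holds automatically.

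For the converse, we start from $N$ and the orbibundle with $d\eta=c^{-1}\tilde\rho^T$; this is type $(1,1)$, so by \cite{CoMa} we get a Sasaki with torsion structure. We do not use the metric $\tilde g^T$ directly but take $g^T:=\frac{\tilde s^T}{2c^2}\tilde g^T$, which is positive since $\tilde s^T>0$. Running the computations above backwards, $\rho^B=\tilde\rho^T-\tfrac12dd^\phi\log\tilde s^T$. We must check $\rho^B=c\,d\eta$ and that $c=\tfrac12 g(d\eta,F)$ equals the given constant. The latter holds because $g(\tilde\rho^T,F)=e^{-f}\tilde s^T/2$-type identities give exactly $2c$. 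Closedness of $H$ then reduces to \eqref{eqn:dim41}. We expect a discrepancy between $c\,d\eta=\tilde\rho^T$ and the formula for $\rho^B$ containing the $dd^\phi\log\tilde s^T$ term. We will resolve it by choosing $\eta$ modified by $d^\phi$ of a basic function via Proposition~\ref{prop: normality variation}, i.e. $\eta'=\eta-\tfrac{1}{2c}d^\phi\log\tilde s^T$, which keeps normality and the Reeb field Killing.
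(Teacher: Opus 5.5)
Your set-up is fine: $f$ is basic, $dF=\theta\wedge F$, $\tilde\omega=e^{-f}F$ is transversely K\"ahler, and $dd^\phi F=dd^\phi(e^f)\wedge\tilde\omega$. The argument fails at the conformal change formula $\rho^B=\tilde\rho^T-\tfrac12 dd^\phi f$, which is false. In real dimension $4$ the Bismut Ricci form is \emph{conformally invariant}. Under $\tilde\omega=e^u\omega$ in complex dimension $n$, the shift of the Chern Ricci form and the shift coming from $\tilde\theta=\theta+(n-1)du$ combine to a multiple of $(n-2)\,dd^cu$, which vanishes for $n=2$. A sanity check: the Hopf metric $|z|^{-2}g_E$ on $\mathbb{C}^2\setminus\{0\}$ is Bismut flat even though $dd^c\log|z|^2\neq0$, which your formula would contradict.

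Since $\tilde g^T$ is K\"ahler, $\rho^B=\tilde\rho^B=\tilde\rho^T$, so the $\nabla$-Einstein condition reads simply $c\,d\eta=\tilde\rho^T$. This is the whole engine of the paper's proof. Tracing gives $\tilde s^T=2c^2e^f$ immediately. Then \eqref{eqn:4dim} follows by substituting $d\eta=c^{-1}\tilde\rho^T$ and $F=\tfrac{\tilde s^T}{2c^2}\tilde\omega$ into $d\eta\wedge d\eta+dd^\phi F=0$.

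With your spurious term, item 1 is never actually proved; you only expect that contracting $dH=0$ with $F\wedge F$ removes the $\Delta f$ term. That contraction does not do this. It produces the Box-type equation, which involves $|(d\eta)_0|^2$ (the traceless Ricci part) and $\Delta e^f$. This is an independent PDE, not an identity that can cancel a Laplacian term in the trace. So your two scalar relations cannot be combined into $\tilde s^T=2c^2e^f$. The cancellations you anticipate for item 2 are likewise left unverified.

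The converse inherits the same error, and your proposed repair makes it worse. There is no discrepancy to fix. Take $g^T=\tfrac{\tilde s^T}{2c^2}\tilde g^T$. Conformal invariance gives $\rho^B=\tilde\rho^T=c\,d\eta$. A trace in a $g^T$-orthonormal frame gives $\tfrac12 g(d\eta,F)=c$. Finally $dH=c^{-2}\tilde\rho^T\wedge\tilde\rho^T+\tfrac{1}{2c^2}dd^J\tilde s^T\wedge\tilde\omega^T$ vanishes by \eqref{eqn:dim41}; this is exactly the paper's argument.

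Replacing $\eta$ by $\eta'=\eta-\tfrac{1}{2c}d^\phi\log\tilde s^T$ would break all three conditions:
\begin{itemize}
\item Since in fact $\rho^B=\tilde\rho^T$, you would get $\rho^B-c\,d\eta'=\tfrac12 dd^\phi\log\tilde s^T$, which is nonzero unless $\log\tilde s^T$ is pluriharmonic.
\item The function $\tfrac12 g(d\eta',F)$ would acquire a generally non-constant Laplacian term, so it is no longer the constant $c$.
\item The form $d\eta'\wedge d\eta'$ would acquire cross terms, so closedness of $H$ would no longer reduce to \eqref{eqn:dim41}.
\end{itemize}
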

\begin{proof}

By Theorem~\ref{thm:BHE}, the $\nabla$-Hermite--Einstein condition is equivalent to
\[
c = \frac{1}{2} g(d\eta, F) = \text{const}, \quad
c\, d\eta = \rho^B, \quad dH = d\eta \wedge d\eta + dd^\phi F = 0,
\]
where $\rho^B$ denotes the Bismut Ricci form of the transverse Hermitian structure.

\medskip
We prove the first claim. Since $\theta = df$, the conformally rescaled transverse metric
\[
\tilde g^T = e^{-f} g^T
\]
is K\"ahler. In dimension $4$, the Bismut Ricci form is conformally invariant, so
\begin{equation} \label{eqn:confinv}
\tilde \rho^T = \tilde \rho^B = \rho^B = c \, d\eta.
\end{equation}

Let $\{\xi, x_i, \phi x_i\}$ be a $\phi$-basis, so that $\{ e^{f/2} x_i, e^{f/2} \phi x_i \}$ forms a local horizontal orthonormal basis for $\tilde g^T$. Taking the trace of \eqref{eqn:confinv}, we obtain
\[
\tilde s^T = \sum_{i=1}^{4} e^f \tilde \rho^T(\phi x_i, x_i) = c e^f \sum_{i=1}^{4} d\eta(\phi x_i, x_i) = 2 c^2 e^f > 0.
\]

Moreover, Equation~\eqref{eqn:4dim} follows from
\[
d\eta \wedge d\eta = \frac{1}{c^2} \tilde \rho^T \wedge \tilde \rho^T, \quad
dd^\phi F = dd^\phi(e^f \tilde \omega) = dd^\phi\left(\frac{\tilde s^T}{2 c^2} \tilde \omega\right) = \frac{1}{2c^2} dd^\phi(\tilde s^T) \wedge \tilde \omega.
\]

\medskip
We now prove that the construction is reversible.   
Assume that $(N, \tilde g^T, J, \tilde \omega^T)$ satisfies the stated hypotheses, and let $M$ be a $S^1$-orbibundle over $N$ with curvature $d\eta=c^{-1} \tilde{\rho}^T$. The total space $M$ then carries a natural normal almost contact metric structure
\[
(\phi, \eta, \xi, \tilde g = \eta^2 + \tilde g^T),
\]
and by construction $\mathcal{L}_\xi \tilde g^T = 0$.

Consider the metric
\[
g = \eta^2 + \frac{\tilde s^T}{2 c^2}\,\tilde g^T,
\qquad
g^T := \frac{\tilde s^T}{2c^2}\,\tilde g^T.
\]
Then $(\phi, \eta, \xi, g = \eta^2 + g^T)$ defines again a normal almost contact metric structure on $M$. Moreover, since $\mathcal{L}_\xi g = 0$, the connection $\nabla$ is well defined.

Let $F$ denote the fundamental form of $(\phi, \eta, \xi, g)$. By construction,
\[
F = \frac{\tilde s^T}{2c^2}\,\tilde \omega^T.
\]
We denote by $H$ the torsion of $\nabla$, which is given by
\[
H = \eta \wedge d\eta + d^\phi F=\eta \wedge d\eta+ d^\phi \left(\frac{\tilde s^T}{2c^2}\right) \wedge \tilde \omega^T.
\]

Observe that the transverse Hermitian structure $(g^T, \phi_{|\mathcal{F}^\perp_\xi} = J)$ is conformal to $(\tilde g^T, \phi_{|\mathcal{F}^\perp_\xi} = J)$. Hence, by conformal invariance of the Bismut Ricci form in real dimension $4$, it follows that
\[
\tilde \rho^T = \rho^B.
\]

Let $\{e_i, J e_i\}$ be a local orthonormal frame with respect to $\tilde g^T$. Then a local horizontal orthonormal frame for $g^T$ is given by
\[
\left\{\sqrt{\frac{2c^2}{\tilde s^T}} \, e_i,\ \sqrt{\frac{2c^2}{\tilde s^T}} \, J e_i \right\}
=
\left\{\sqrt{\frac{2c^2}{\tilde s^T}} \, e_i,\ \sqrt{\frac{2c^2}{\tilde s^T}} \, \phi e_i \right\}.
\]
With respect to this frame, we compute
\[
\frac{1}{2} g(d\eta, F)
= \frac{1}{2} \frac{2c^2}{\tilde s^T} \sum_{i=1}^{2n} d\eta(\phi e_i, e_i)
= \frac{1}{2} \frac{2c^2}{\tilde s^T} \cdot \frac{1}{c} \sum_{i=1}^{2n} \tilde \rho(J e_i, e_i)
= c.
\]
Therefore, by Theorem~\ref{thm:BHE}, we obtain $\rho^\nabla = 0$. In fact, the constant $c$ satisfies
\[
c = \frac{1}{2} g(d\eta, F), \qquad c\, d\eta = \tilde \rho^T = \rho^B.
\]

Finally, we compute
\[
dH = d\eta \wedge d\eta + dd^\phi F
= \frac{1}{c^2} \tilde \rho^T \wedge \tilde \rho^T
+ dd^\phi \left(\frac{\tilde s^T}{2c^2} \tilde \omega^T \right)
= \frac{1}{c^2} \tilde \rho^T \wedge \tilde \rho^T
+ \frac{1}{2c^2} dd^J (\tilde s^T) \wedge \tilde \omega^T = 0,
\]
where the last equality follows from the hypothesis (Equation~\eqref{eqn:dim41}). Hence, $M$ admits a $\nabla$-Hermite--Einstein structure.

\end{proof}

\begin{rmk}
   Observe that when $V = 0$ and $c \neq 0$, the resulting transverse geometry is identical to that of compact Bismut-Hermitian Einstein manifolds in dimension~6, as proven in~\cite{ABLS}.

\end{rmk}
If $(M,\phi,\xi,g)$ is a compact $5$-dimensional $\nabla$-Einstein manifold satisfying $c \neq 0$ and $V = 0$, then the transverse K\"ahler metric $\tilde g^T$ satisfies equation~\eqref{eqn:dim41}, which is equivalent to the nonlinear partial differential equation
\begin{equation} \label{eqn:box}
\square \tilde s^T = \frac{(\tilde {s}^T)^{2}}{2} - |\tilde{\mathrm{Ric}}|^2,
\end{equation}
where $\square\tilde s^T = -\Delta \tilde s^T$. This equation, commonly referred to in the physics literature as the \emph{Box equation}, first appeared in~\cite{GK}.

In~\cite[Section~4.2]{CGMS}, the authors construct a family of $2$-dimensional orthotoric K\"ahler orbifold surfaces $S^{a,b,c}$, parametrized by three positive coprime integers $a,b,c$, whose K\"ahler metrics satisfy equation~\eqref{eqn:box} and have positive scalar curvature. Starting from these orbifolds, they define a family of $5$-dimensional manifolds $L^{a,b,c}$ as $S^1$-orbibundles over $S^{a,b,c}$ with curvature form
\[
d\eta = \frac{1}{2}\,\tilde{\rho}^T,
\]
where $\tilde{\rho}^T$ denotes the Ricci form of the K\"ahler base (see also~\cite{ALL} for further details on the geometry of $L^{a,b,c}$). These manifolds were originally introduced in~\cite{CGMS} in the context of supersymmetric $\mathrm{AdS}_3 \times Y_7$ solutions of type IIB supergravity. Nevertheless, by Theorem~\ref{thm:caracterization}, the manifolds $L^{a,b,c}$ provide examples of $\nabla$-Einstein manifolds with $c = 2$. \par
The direct product $S^1 \times L^{a,b,c}$ carries an induced Bismut--Hermite-Einstein structure (see Remark \ref{rmk:oddeven}), which coincides with the example recently constructed in \cite{ALL}.
\medskip

We recall the following definition \cite{FI2}.

\begin{defn}
Let $(M,\phi,\xi,g)$ be an almost contact metric manifold. The almost contact metric structure
\[
\tilde \phi = \phi, \quad 
\tilde \xi = \xi, \quad 
\tilde \eta = \eta, \quad 
\tilde g = (1-e^{-f})\,\eta^2 + e^{-f} g
\]
is called \emph{special conformal} if $\mathcal{L}_\xi f = 0$.
\end{defn}

It is worth noting that if $(\phi,\xi,g)$ is special conformal to $(\tilde \phi,\tilde \xi,\tilde g)$ and $\mathcal{L}_\xi g = 0$, then $\mathcal{L}_\xi \tilde g = 0$ and $N_{\tilde \phi} = N_\phi$ \cite{FI2}.

\medskip
Let $(M,\phi,\xi,g)$ be a compact $5$-dimensional $\nabla$-Einstein manifold with $c \neq 0$. If $V = 0$, then by the previous theorem and Remark \ref{rmk:fbasic}, $(M,\phi,\xi,g)$ is special conformal to the structure
\[
\tilde \phi = \phi, \quad 
\tilde \xi = \xi, \quad 
\tilde \eta = \eta, \quad 
\tilde g = (1-e^{-f})\,\eta^2 + e^{-f} g,
\]
where $f$ is the unique normalized smooth function satisfying $df = \theta$, with $\theta$ denoting the Lee form of the transverse Hermitian structure. 

By the considerations above, $(\tilde \phi,\tilde \xi,\tilde g)$ is again a normal almost contact metric structure and satisfies $\mathcal{L}_\xi \tilde g = 0$. In particular, the Friedrich--Ivanov connection $\tilde \nabla$ is still well defined. \par
Since $\tilde F = e^{-f} F$ and $\theta = df$, 
\[
d\tilde F = 0.
\]
This has two consequences. First, the torsion $\tilde H$ of the connection $\tilde \nabla$ is given by
\[
\tilde H = \tilde \eta \wedge d\tilde \eta = \frac{1}{c} \eta \wedge \tilde \rho^T,
\]
where $\tilde \rho^T$ denotes the Ricci form of the transverse K\"ahler structure. Secondly, the condition $d\tilde F = 0$ implies that $(\tilde \phi,\tilde \xi,\tilde g)$ is quasi-Sasaki. Therefore, we obtain the following result.

\begin{cor} \label{corolnablaHermEins}
 Let $(M,\phi,\xi,g, \nabla)$ be a compact $5$-dimensional $\nabla$-Einstein manifold with $c \neq 0$. If $V = 0$, and hence $\theta = df$, then $(\phi,\xi,g)$ is special conformal to a quasi-Sasaki structure $(\tilde \phi,\tilde \xi,\tilde g)$.
\end{cor}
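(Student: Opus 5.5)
The plan is to verify directly that the special conformal structure $(\tilde\phi,\tilde\xi,\tilde\eta,\tilde g)$ with $\tilde g=(1-e^{-f})\eta^2+e^{-f}g$ is normal, has Killing Reeb field, and has closed fundamental form. First I check that it is an almost contact metric structure at all. Writing $g=\eta^2+g^T$, we get $\tilde g=\eta^2+e^{-f}g^T$. Thus $\tilde g(\xi,\xi)=1$, $\tilde\eta=\tilde g(\xi,\cdot)$, and the compatibility relation $\tilde g(\phi X,\phi Y)=\tilde g(X,Y)-\eta(X)\eta(Y)$ holds because $\phi$ is horizontal and $g^T$ is $\phi$-Hermitian. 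The positivity of $\tilde g$ is automatic from this rewriting.

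Next, normality. The tensors $\phi,\xi,\eta$ are unchanged, and the normality condition $N_\phi+d\eta\otimes\xi=0$ does not involve the metric. Hence normality is inherited from $(\phi,\xi,\eta,g)$, in line with the remark $N_{\tilde\phi}=N_\phi$ from \cite{FI2}.

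For the Killing property I use Remark~\ref{rmk:fbasic}: when $V=0$ we have $\mathcal L_\xi f=0$. Combined with $\mathcal L_\xi\eta=0$ and $\mathcal L_\xi g^T=0$ (Lemma~\ref{lem:blair} and Remark~\ref{rmk:rmk}), this gives $\mathcal L_\xi\tilde g=\mathcal L_\xi(\eta^2)+e^{-f}\mathcal L_\xi g^T=0$. So $\xi$ is Killing for $\tilde g$, and Theorem~\ref{thm:nabla} provides $\tilde\nabla$. This is also exactly what is needed for the structure to qualify as special conformal.

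The main step is to show $d\tilde F=0$, where $\tilde F=\tilde g(\cdot,\phi\cdot)=e^{-f}F$. By Theorem~\ref{thm:caracterization}, $e^{-f}g^T$ is the transverse Kähler metric $\tilde g^T$, so $\tilde F$ restricted to $\ker\eta$ is the transverse Kähler form. This gives closedness only on horizontal vectors, so I also need the components containing $\xi$. These vanish because $\iota_\xi\tilde F=0$ and $\mathcal L_\xi\tilde F=0$, the latter coming from $\mathcal L_\xi F=0$ and $\mathcal L_\xi f=0$; hence $\iota_\xi d\tilde F=0$.

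The purely horizontal part of $d\tilde F$ is the transverse differential, which I compute as
$e^{-f}(dF-df\wedge F)$. In transverse real dimension $4$ we have $dF=\theta\wedge F$, so this becomes $e^{-f}(\theta-df)\wedge F=0$, using $\theta=df$. This is the point where I expect some care, namely checking that the sign and normalization convention for the Lee form makes $\theta=df$ correspond exactly to $e^{-f}F$ being closed. It is consistent with the statement in Theorem~\ref{thm:caracterization} that $\tilde g^T=e^{-f}g^T$ is Kähler, so I would cite that. Therefore $d\tilde F=0$, and the normal structure with closed fundamental form and Killing Reeb field is quasi-Sasaki by definition.

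As a consistency check, $\tilde H=\eta\wedge d\eta=c^{-1}\eta\wedge\tilde\rho^T$ by \eqref{eqn:confinv}.
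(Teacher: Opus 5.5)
Your proposal is correct and follows the same route as the paper. Normality and the Killing property of $\xi$ for $\tilde g=\eta^2+e^{-f}g^T$ are inherited, using $\mathcal L_\xi f=0$ from Remark~\ref{rmk:fbasic}, and $d\tilde F=d(e^{-f}F)=e^{-f}(\theta-df)\wedge F=0$ then gives the quasi-Sasaki condition. Your separate check of the $\xi$-components is harmless but redundant: $d(e^{-f}F)=e^{-f}(dF-df\wedge F)$ holds on all of $M$, and $dF=\theta\wedge F$ holds globally there since $\iota_\xi dF=0$.
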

\medskip

The previous discussion enables us to establish a classification theorem for compact $5$-dimensional $\nabla$-Einstein manifolds.
\begin{thm} \label{thm:5}
Let $(M,\phi,\xi,g,\nabla)$ be a compact $5$-dimensional $\nabla$-Hermite--Einstein manifold. Let $f$ be the unique normalized smooth function such that the vector field
\[
V = \theta^\sharp - \mathrm{grad} f
\]
is parallel. Then the following hold.

\begin{enumerate}
\item If $V \neq 0$, then $(M,\phi,\xi,g,\nabla)$ is $\nabla$-flat (see Theorem~\ref{thm:flat}). In particular, its universal cover is isometric to $\R \times (\R \times \mathrm{SU}(2))$. 

\item If $V = 0$, the following cases occur:
\begin{enumerate}
    \item If $c = 0$, then $M$ is a mapping torus of a compact K\"ahler Ricci-flat manifold.
    
    \item If $c \neq 0$, then:
    \begin{enumerate}
        \item If $f$ is constant, then $\nabla$ is flat. Moreover, its universal cover is isomorphic to $\mathrm{SU}(2) \times \C$, and the lifted almost contact metric structure induces the standard left-invariant Sasaki structure on the $\mathrm{SU}(2)$ factor.
        
        \item If $f$ is non-constant, then $\nabla$ is non-flat and $(M,\phi,\xi,g,\nabla)$ is locally an $S^1$-bundle over a $4$-dimensional K\"ahler manifold with strictly positive non constant scalar curvature $\tilde s^T$, whose curvature satisfies
        \[
        \square \tilde s^T = \frac{(\tilde {s}^T)^{2}}{2} - |\tilde{\mathrm{Ric}}|^2,
        \]
        where $\square \tilde s^T = -\Delta \tilde s^T$.
    \end{enumerate}
\end{enumerate}
\end{enumerate}
\end{thm}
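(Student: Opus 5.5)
The plan is to assemble the theorem from the results already proved in this section, splitting according to $V$, then $c$, then whether $f$ is constant.

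\emph{Case $V\neq0$.} Proposition~\ref{prop:Vneq0} gives flatness of $\nabla$, so Theorem~\ref{thm:flat} applies: up to a finite cover $M=G/\mathbb{Z}^k$, with $G$ a simply connected $5$-dimensional Lie group carrying a bi-invariant metric. By Milnor's decomposition, $G=G'\times\R^k$ with $G'$ compact, simply connected and semisimple. Since $\dim G'\le5$, we get $G'\in\{\{e\},\mathrm{SU}(2)\}$.
\begin{itemize}
\item If $G'=\{e\}$, then $G=\R^5$ and $H=0$. Hence $d\eta=0$ and $dF=0$, and the transverse structure is flat Kähler.
\item Otherwise $G=\mathrm{SU}(2)\times\R^2$.
\end{itemize}
The parallel vectors $V,\phi V$ are nonzero, horizontal, and central in $\mathfrak g$. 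I would use them to exclude the abelian case, or else to absorb it as a degenerate form. That leaves $\R\times(\R\times \mathrm{SU}(2))$, with the $\R^2$ factor spanned by $V,\phi V$. I would verify this last identification directly from Theorem~\ref{thm:left-inv_acm} and the centrality of $V,\phi V$; it is the delicate bookkeeping point in this case.

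\emph{Case $V=0$, $c=0$.} This is Remark~\ref{rmk:c=0}, which gives $d\eta=0$ and $\rho^B=0$. In real dimension $4$, $\theta=df$ makes the transverse structure conformally Kähler. The conformally invariant $\rho^B$ then equals the Ricci form of the Kähler metric, so that metric is Ricci-flat. Tischler's theorem together with $\mathcal L_\xi f=0$ (Remark~\ref{rmk:fbasic}) yields the mapping torus of a compact Kähler Ricci-flat surface.

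\emph{Case $V=0$, $c\neq0$.} Here Theorem~\ref{thm:caracterization} and Corollary~\ref{corolnablaHermEins} give the local $S^1$-bundle description over a Kähler surface with $\tilde s^T=2c^2e^f>0$. Equation \eqref{eqn:dim41} is equivalent to the Box equation \eqref{eqn:box}; this follows by expanding $\tilde\rho^T\wedge\tilde\rho^T=\tfrac12\big((\tilde s^T)^2/4\cdot 2-|\tilde\rho|^2\big)\vol$ and using $dd^J s\wedge\tilde\omega=-(\Delta s)\vol$ up to normalization.

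If $f$ is non-constant, then $\tilde s^T$ is non-constant. Non-flatness follows because in the flat case $H$ is parallel by Theorem~\ref{thm:CKL}. That forces $|d\eta|_g$ and $|d^\phi F|_g$ to be constant, and $\nabla\theta=0$. Since $\theta=df$ and $M$ is compact, $f$ would then be constant, a contradiction.

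If $f$ is constant, then $\tilde s^T$ is constant, so $dd^\phi F=0$ and \eqref{eqn:box} gives $|\tilde{\mathrm{Ric}}|^2=(\tilde s^T)^2/2$. The transverse Kähler metric has constant scalar curvature $s$ and $\tilde\rho\wedge\tilde\rho=0$. Decomposing $\tilde\rho=\tfrac s4\tilde\omega+\rho_0$ with $\rho_0$ anti-self-dual, this reads $s^2/16\cdot 2=|\rho_0|^2$, so the Ricci eigenvalues are $s/2$ and $0$. This is where I expect the main obstacle: showing the base is locally $S^2\times\R^2$, i.e.\ that $\mathrm{Ric}$ is parallel. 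My first attempt is compactness plus the Bochner formula for the parallel-candidate Ricci form. The form $\tilde\rho$ is closed, and with $s$ constant it is also coclosed, hence harmonic. Since $\tilde\rho\wedge\tilde\rho=0$ and $|\tilde\rho|$ is constant, a Weitzenböck argument should force $\nabla\tilde\rho=0$. De Rham splitting of the transverse geometry then gives the local product of a round $S^2$ and a flat $\R^2$.

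Finally, $d\eta=c^{-1}\tilde\rho$ is supported on the $S^2$ factor, so the circle bundle over $S^2$ gives $\mathrm{SU}(2)$ with its standard Sasaki structure, and the flat factor gives $\C$. The torsion is then parallel, and $\nabla$ is flat, as for bi-invariant structures. Lifting to the universal cover yields $\mathrm{SU}(2)\times\C$.
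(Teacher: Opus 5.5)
Your proposal has two genuine gaps: a minor one in the case $V\neq0$, and a central one in the case $V=0$, $c\neq0$, $f$ constant.

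\textbf{Case $V\neq0$: the abelian case is not excluded.} You never actually rule out $G=\R^5$. Centrality of $V,\phi V$ cannot distinguish the two cases, since on $\R^5$ every vector is central. ``Absorbing'' $\R^5$ is also not an option, because $\R^5$ is not isometric to $\R\times(\R\times\mathrm{SU}(2))$. The missing argument uses ingredients you already have.
\begin{enumerate}
\item In the flat case $H$ is $\nabla$-parallel by Theorem~\ref{thm:CKL}, so $\omega^\nabla=c\,\eta+\phi\theta$ is parallel, and hence so is $\theta^\sharp$.
\item Then $\mathrm{grad} f=\theta^\sharp-V$ is parallel, so it has constant length. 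It vanishes at a critical point of $f$, so $f$ is constant and $V=\theta^\sharp$.
\item On $\R^5$ one has $H=0$, hence $dF=0$, $\theta=0$ and $V=0$, a contradiction.
\end{enumerate}
Once $\R^5$ is excluded, Milnor's decomposition already gives $\mathrm{SU}(2)\times\R^2$ with the product metric, so your identification of the $\R^2$ factor with $\langle V,\phi V\rangle$ is unnecessary. Your treatment of $V=0$, $c=0$ via conformal invariance of $\rho^B$ is fine. It is more direct than the paper's route, which passes through compact Bismut--Hermite--Einstein surfaces and then excludes $\mathrm{SU}(2)\times S^1$.

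\textbf{Case $V=0$, $c\neq0$, $f$ constant: the Weitzenb\"ock step does not close.} This is exactly the step you flag. Write $\tilde\rho=\tfrac s4\tilde\omega+\rho_0$; then $\rho_0$ is anti-self-dual, harmonic and of constant length. The Weitzenb\"ock formula, pointwise or after integration, only gives
\[
|\nabla\rho_0|^2=2\langle W^-(\rho_0),\rho_0\rangle-\tfrac s3|\rho_0|^2 .
\]
Here $W^-$ is controlled neither by the K\"ahler condition nor by the Ricci eigenvalues. The right-hand side vanishes on the model $S^2\times T^2$, so what you need is an inequality on $W^-$ that the data do not supply. Harmonicity, $\tilde\rho\wedge\tilde\rho=0$ and constant norm therefore do not force $\nabla\tilde\rho=0$.

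What is needed is a genuinely global result that uses compactness and the non-negativity of the two constant Ricci eigenvalues. This is the splitting theorem of Apostolov--Dr\u{a}ghici--Moroianu \cite{ADM}, which the paper applies in transverse form following \cite[Cor.~1.2]{ABLS]}. It must be run transversally on the compact $M$, since the local quotient by $\xi$ is not a compact K\"ahler surface.

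Two further points are needed after the transverse splitting.
\begin{enumerate}
\item You need $\mathcal K=\ker d\eta\cap\ker\eta$ to be Levi-Civita parallel also in the $\xi$-direction before applying de Rham. The paper obtains this from $\nabla d\eta=0$ together with $\nabla^{\mathrm{LC}}_\xi x=\nabla_\xi x-\tfrac12(\iota_x d\eta)^\sharp$.
\item Flatness of the $3$-dimensional factor follows simply because $\rho^\nabla=0$ and $\nabla\xi=0$ kill the whole curvature in dimension $3$. An appeal to bi-invariance is not needed.
\end{enumerate}
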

\begin{proof}
Let us assume that $V \neq 0$. Then, by Proposition~\ref{prop:Vneq0}, the connection $\nabla$ is flat. Hence, by Theorem~\ref{thm:flat}, the universal cover is isomorphic either to $\mathbb{R} \times (\mathbb{R} \times \mathrm{SU}(2))$ or to $\mathbb{R}^5$. We claim that the latter case cannot occur.  \par

Since $\nabla$ is flat, the Lee form $\theta$ is $\nabla$-parallel. In particular,
\[
\nabla V = -\nabla \mathrm{grad} f = 0,
\]
so that $f$ is constant. As $V \neq 0$, it follows that $V^\flat = \theta$ is non-zero.

On the other hand, since the Sasaki with torsion structure lifts to a $\nabla$-flat left-invariant Sasaki with torsion structure on $\R^5$, the Lee form vanishes. This would imply $V = 0$, a contradiction.  \medskip

We now consider the case $V = 0$. If $c = 0$, then by Remark~\ref{rmk:c=0}, $M$ is the mapping torus of either a K\"ahler Ricci-flat manifold or $\mathrm{SU}(2) \times S^1$ endowed with its standard Bismut-flat structure. We claim that the latter case cannot occur.

Indeed, since $V=0$, $\theta = df$. On $\mathrm{SU}(2) \times S^1$, the Lee form $\theta$ has constant non-zero norm, since it is invariant and the Bismut flat structure is non K\"ahler. Therefore, $|df|^2$ is constant and non-zero. 
However, by the compactness of $M$, there exists at least one point $p$ in which $df(p)=0$, implying that $df=0$ everywhere. The contradiction follows. 
\medskip

Assume now that $c \neq 0$. We distinguish two cases, depending on whether $f$ is constant or not.

\smallskip

\noindent
Assume that $f$ is constant. Since $V=0$, in this case $\theta = 0$, and hence the transverse geometry is K\"ahler. The defining equations reduce to
\[
c\, d\eta = \rho^{\mathrm{LC}}, \qquad d\eta \wedge d\eta = 0, \qquad c = \mathrm{const}.
\]

Write
\[
d\eta = \frac{c}{2} F + \gamma,
\]
where $\gamma$ is a horizontal anti-self-dual $2$-form. The condition $d\eta \wedge d\eta = 0$ implies that $|\gamma|^2 = c^2$, and so $d\eta$ has constant norm. Moreover, since $c \neq 0$, it follows that the kernel of $d\eta$ restricted to the horizontal distribution has constant dimension equal to $2$.

We therefore obtain the orthogonal splitting
\[
TM = \mathcal{F}_\xi \oplus \mathcal{K} \oplus \mathcal{J},
\]
where $\mathcal{K} := \ker(d\eta)_{\mathcal{H}}$ is the kernel of $d\eta$ on the horizontal distribution, and $\mathcal{J}$ is the orthogonal complement of $\ker(d\eta)_{\mathcal{H}}$ with respect to the transverse metric. Observe that, with respect to $g$,
\[
\mathcal{K}^\perp = \mathcal{F}_\xi \oplus \mathcal{J}.
\]

Since $d\eta$ is of type $(1,1)$ on horizontal vectors, both $\mathcal{K}$ and $\mathcal{J}$ are $\phi$-invariant.

Using $c\, d\eta = \rho^{\mathrm{LC}}$ and the fact that the transverse scalar curvature is constant and strictly positive (see Theorem~\ref{thm:caracterization}), it follows that the transverse Ricci tensor of the K\"ahler metric has two distinct non-negative constant eigenvalues, one of which is zero.
\medskip

In this setting, one can apply the argument of Corollary~1.2 in \cite{ABLS} (based on a transverse version of Theorem~1 in \cite{ADM}) to conclude that the transverse Ricci tensor, and hence $\rho^{\mathrm{LC}}$, is parallel with respect to the transverse Levi-Civita connection. Since $d\eta$ is a constant multiple of $\rho^{\mathrm{LC}}$, it is transversely parallel, and hence so is $\mathcal{K}$.

\medskip

We claim that $d\eta$ is parallel with respect to $\nabla$. Let us denote by $\nabla^T$ the transverse Levi-Civita connection. A direct computation, using Equation \eqref{eqn:xieta} and item 4. in Remark \ref{rmk:rmk}, shows that
\[
(\nabla_x d\eta)(y,z) = (\nabla^T_x d\eta)(y,z) = 0,
\]
\[
(\nabla_\xi d\eta)(X,Y) = \mathcal{L}_\xi d\eta (X,Y) + g(\iota_X d\eta, \iota_Y d\eta) - g(\iota_X d\eta, \iota_Y d\eta) = 0, 
\]
\[
(\nabla_X d\eta)(\xi,Y) = 0,
\]
for all horizontal vector fields $x,y,z$ and generic vector fields $X,Y$. 

\medskip

Using this fact, we claim that $\mathcal{K}$ is also parallel with respect to the Levi-Civita connection of $g$.

Let $x \in \mathcal{K}$ and let $y$ be a horizontal vector field. Denote by $\nabla^T$ the transverse Levi-Civita connection. Then
\[
\nabla^{\mathrm{LC}}_y x = \nabla^T_y x \in \mathcal{K},
\]
since $\mathcal{K}$ is transversely parallel.

It remains to show that
\[
\nabla^{\mathrm{LC}}_\xi x \in \mathcal{K}.
\]
To this end, let $Y$ be a generic vector field. Using the relation between $\nabla^{\mathrm{LC}}$ and $\nabla$, we compute
\[
d\eta(\nabla^{\mathrm{LC}}_\xi x, Y)
= d\eta(\nabla_\xi x, Y) - \frac{1}{2} d\eta(\iota_x d\eta, Y).
\]
Since $x \in \mathcal{K}$, the second term vanishes, and by the $\nabla$-parallelism of $d\eta$ we obtain
\[
d\eta(\nabla_\xi x, Y) = \nabla_\xi d\eta \ (x,Y) = 0.
\]
Hence $d\eta(\nabla^{\mathrm{LC}}_\xi x, Y) = 0$ for all $Y$, and therefore
\[
\nabla^{\mathrm{LC}}_\xi x \in \mathcal{K}.
\]

We conclude that $\mathcal{K}$ is parallel. Consequently, its orthogonal complement
\[
\mathcal{F}_\xi \oplus \mathcal{J}
\]
is also parallel. By the de Rham decomposition theorem, the universal cover of $M$ splits as a Riemannian product
\[
\widetilde{M} \cong N^3 \times \mathbb{C},
\]
where $TN^3 = \mathcal{F}_\xi \oplus \mathcal{J}$.

Since the induced structure is $\nabla$-Einstein, the same holds on $N^3$. In dimension $3$, this implies flatness, hence $N^3$ is isomorphic to $\mathrm{SU}(2)$, by Theorem~\ref{thm:flat}. Up to isomorphism, this is the standard left-invariant Sasaki structure on $\mathrm{SU}(2)$.
\noindent

\medskip 
\noindent

If $f$ is non-constant, then $\nabla$ is non-flat. Indeed, otherwise $\theta$ would have constant norm (since the structure on the universal cover is left-invariant), and hence $df = \theta$ would also have constant norm, yielding a contradiction. The remaining claims follow by Theorem \ref{thm:caracterization}.

\end{proof}

\begin{rmk}
The assumptions of Theorem \ref{thm:5} can be slightly relaxed. Indeed, as recalled in the preliminaries, in dimension five normality is equivalent to the skew-symmetry of the Nijenhuis tensor \cite{CM}.
\end{rmk}
A general result on the properties of compact $\nabla$-Einstein manifolds with $V \neq 0$ was recently established in \cite[Theorem 5.6]{KS}.
\subsection{7-dimensional analysis}
In this section we collect some results for compact $\nabla$-Einstein manifolds in dimension $7$. 
\begin{thm}
\label{thm:7dimensional-structure}
Let $(M^7,\phi,\xi,\eta,g,\nabla)$ be a compact $\nabla$-Einstein
Sasaki-with-torsion manifold and consider the vector field
\[
V:=\theta^\sharp-\operatorname{grad}(f),
\]
where $\theta$ is the transverse Lee form. 
Then the following statements hold.

\begin{enumerate}
\item Suppose that $V\neq0$. Set
\[
\mu:=V^\flat,
\qquad
\nu:=(\phi V)^\flat=\phi\mu,
\]
and consider the orthogonal decomposition
\[
TM=\mathcal G\oplus\mathcal G^\perp,
\qquad
\mathcal G:=\langle V,\phi V,\xi\rangle.
\]
Then $\mathcal G$ is integrable and defines a $3$-dimensional foliation
with flat leaves, locally isometric to $\mathbb R^3$. Moreover, $M$ is
locally an $\mathbb R^3$-bundle over a $4$-dimensional Hermitian
manifold with Hermitian form $\omega^T$ and complex structure induced
by $\phi$. The forms $d\mu$, $d\nu$, and $d\eta$ are basic real $(1,1)$-forms on
the local quotient, and the transverse Hermitian structure is conformally K\"ahler. More
precisely,
\[
\widetilde\omega^T=e^{-f}\omega^T
\]
is K\"ahler. If $\widetilde\rho^T$ and $\widetilde s^T$ denote,
respectively, its Ricci form and scalar curvature, then $c$ is constant
and
\begin{equation}
\label{eqn:rho2}
\widetilde\rho^T=d\nu+c\,d\eta,
\qquad
\widetilde s^T=2e^f(c^2+1)>0.
\end{equation}
Finally, the strong condition $dH=0$ is equivalent to
\begin{equation}
\label{eqn:dim7}
\begin{split}
0={}&
d\mu\wedge d\mu
+
(\widetilde\rho^T-c\,d\eta)
 \wedge
(\widetilde\rho^T-c\,d\eta)
+
d\eta\wedge d\eta
+
dd^\phi\left(
\frac{\widetilde s^T}{2(c^2+1)}
\right)
\wedge\widetilde\omega^T.
\end{split}
\end{equation}

\item Suppose that $V=0$. Then $\theta=df$, and the transverse
$6$-dimensional Hermitian geometry is conformally balanced. Locally,
$M$ is an $S^1$-bundle over a conformally balanced Hermitian manifold
$N^6$, and the $\nabla$-Einstein equations are equivalent to
\begin{equation}
\label{eqn:7dim}
\rho^B=c\,d\eta,
\qquad
b=c^2=\mathrm{const},
\qquad
d\eta\wedge d\eta+dd^\phi F=0,
\qquad
\theta=df,
\end{equation}
where $\rho^B$ and $b$ denote, respectively, the Bismut Ricci form and
the Bismut scalar curvature of $N$.

More precisely:

\begin{enumerate}
\item If $c=0$, then
\[
\rho^B=0,
\qquad
g(d\eta,F)=0,
\qquad
d\eta\wedge d\eta+dd^\phi F=0.
\]
Thus, locally, $M$ is an $S^1$-bundle over a conformally balanced
Calabi--Yau-with-torsion manifold $N^6$, whose curvature
$d\eta\in\Omega^{1,1}(N)$ satisfies
\[
g(d\eta,F)=0,
\qquad
d\eta\wedge d\eta+dd^\phi F=0.
\]

\item If $c\neq0$, then $b=c^2>0$ and
\[
d\eta=\frac1c\rho^B,
\qquad
\frac1b\,\rho^B\wedge\rho^B+dd^\phi F=0.
\]
Thus, locally, $M$ is an $S^1$-bundle over a conformally balanced
Hermitian manifold $N^6$ with constant positive Bismut scalar
curvature.
\end{enumerate}
\end{enumerate}
\end{thm}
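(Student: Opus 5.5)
The plan follows the five-dimensional analysis (Proposition~\ref{prop:Vneq0}, Theorem~\ref{thm:caracterization}), with the extra parallel directions handled by hand. Throughout we use the $\nabla$-Einstein reformulation from Proposition~\ref{thm:BHE}: $c$ is constant, $\rho^B=c\,d\eta$, and $dH=0$.

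\textbf{Case $V\neq0$.} By \cite{KS}, $V$ is $\nabla$-parallel and horizontal. Since $\nabla\phi=0$, $\phi V$ is also parallel, and it is horizontal and orthogonal to $V$; $\xi$ is parallel as well. Hence $|V|$ is constant and, after rescaling, we may assume $|V|=1$.

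For parallel vector fields $X,Y\in\{V,\phi V,\xi\}$ we have $[X,Y]=-H(X,Y)^\sharp$ and $d\alpha(X,\cdot)=-H(X,\alpha^\sharp,\cdot)$ for each dual form $\alpha\in\{\mu,\nu,\eta\}$. I will compute the relevant torsion components from $H=\eta\wedge d\eta+d^\phi F$.
\begin{itemize}
\item $H(V,\phi V,\xi)=d\eta(V,\phi V)$.
\item $H(V,\phi V,x)=d^\phi F(V,\phi V,x)$, which is expressible through the Lee form $\theta=\mu+df$.
\end{itemize}
To get integrability of $\mathcal G$, I first want $d\eta(V,\cdot)=0$, i.e.\ $\iota_V d\eta=0$. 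I expect to obtain this as in Step~6 of Theorem~\ref{thm:CKL}, using $\nabla V=0$ together with the curvature identity $R^\nabla(\xi,x,y,z)=-\nabla_x d\eta(y,z)$ applied against the parallel $V$. Flatness of the leaves should follow from the parallel frame: the structure constants are constant, and the leaves turn out abelian once $\iota_V d\eta=\iota_V d\mu=\iota_V d\nu=0$.

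Basicness and the $(1,1)$ type of $d\mu,d\nu,d\eta$ come from $\mathcal L_X\alpha=\iota_X d\alpha=0$ for $X\in\mathcal G$, plus $\phi$-invariance of $\nabla$-parallel data.

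For the transverse geometry, $\theta=\mu+df$ restricted to $\mathcal G^\perp$ equals $df$, so $e^{-f}\omega^T$ is K\"ahler. The Ricci form is then obtained from the Bismut Ricci form by the bundle formula of \cite{GGP}: $\rho^B=\pi^*\rho^B_N-d(c_\mu\mu)-d(c_\nu\nu)-d(c\,\eta)$, with the traces $c_\mu=0$ and $c_\nu=1$ under the normalization $|V|=1$. Combining with four-dimensional conformal invariance gives $\widetilde\rho^T=d\nu+c\,d\eta$. Tracing yields $\widetilde s^T=2e^f(c^2+1)$, since $\Lambda d\nu=2$ and $\Lambda d\eta=2c$ after rescaling. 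Finally, \eqref{eqn:dim7} follows by expanding $dH$ with $F=\mu\wedge\nu+e^{f}\widetilde\omega^T$ and substituting $d\nu=\widetilde\rho^T-c\,d\eta$ and $e^f=\widetilde s^T/(2(c^2+1))$.

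\textbf{Case $V=0$.} Here $\theta=df$, and the argument of Remark~\ref{rmk:fbasic} shows that $f$ is basic. Locally, $M$ is then an $S^1$-bundle over $N^6$ whose Hermitian structure has $\theta$ exact, i.e.\ it is conformally balanced. Equations \eqref{eqn:7dim} are just Proposition~\ref{thm:BHE} together with $dH=0$; taking the trace of $\rho^B=c\,d\eta$ gives $b=c^2$. Subcases (a) and (b) are immediate rewritings.

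The main obstacle is the $V\neq0$ bookkeeping: establishing $\iota_V d\eta=0$ and pinning down the normalization and the precise trace constants in \eqref{eqn:rho2}. I will handle this by working in an adapted $\nabla$-parallel frame $\{\xi,V,\phi V\}$ and checking signs against the five-dimensional case.
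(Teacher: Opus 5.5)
Your case $V=0$ is fine and coincides with the paper's argument: Proposition~\ref{thm:BHE} plus $dH=0$, with $b=c^2$ obtained by tracing. (The computation in Remark~\ref{rmk:fbasic} is four-dimensional, but $\xi f=0$ follows directly from $d(\xi f)=\mathcal L_\xi\theta=0$ and $\int_M\xi f\,\vol=0$.)

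The gap is in the case $V\neq0$. There you take from \cite{KS} only that $V$ is $\nabla$-parallel and horizontal, and try to derive the rest by hand. Two of the mechanisms you name do not deliver what you need.

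First, testing $R^\nabla(\xi,x,y,z)=-(\nabla_xd\eta)(y,z)$ against the parallel field $V$, together with $\nabla_\xi d\eta=0$, only shows that $\iota_Vd\eta$ is $\nabla$-parallel, not that it vanishes. This is repairable. Since $V,\xi$ are parallel, $[\xi,V]=-(\iota_Vd\eta)^\sharp$. Since $\xi$ is Killing with $\mathcal L_\xi\theta=0$, also $[\xi,V]=-\operatorname{grad}(\xi f)$. Hence $\iota_Vd\eta=d(\xi f)$ has constant length and vanishes at a critical point of $\xi f$, so it vanishes everywhere.

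Second, and more seriously, consider $H(V,\phi V,\cdot)=0$. You need it for $[V,\phi V]=0$ (integrability and flatness of $\mathcal G$) and for basicness of $d\mu,d\nu$. It is not ``expressible through the Lee form''. The form $\theta$ only controls the trace $\omega^\nabla=-\sum_iH(\cdot,x_i,\phi x_i)=c\eta+\phi\theta$. In transverse complex dimension $3$ the primitive part of $dF$ is not determined by $\theta$, so the single component $d^\phi F(V,\phi V,x)$ is unconstrained by it.

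Likewise, the $(1,1)$-type of $d\mu=\iota_VH$ and $d\nu=\iota_{\phi V}H$ does not follow from $\phi$-invariance of $\nabla$-parallel data, because these forms are not parallel. For parallel $V$ one has $(\mathcal L_V\phi)X=-T(V,\phi X)+\phi T(V,X)$. So the $(1,1)$-type of $\iota_VH$ is equivalent to $V$ (and similarly $\phi V$) being an infinitesimal automorphism of $\phi$. That is a genuine output of the soliton analysis, not a consequence of parallelism.

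The same missing input sits behind several other steps:
\begin{itemize}
\item Your values $c_\mu=0$, $c_\nu=1$: evaluating $\omega^\nabla=c\eta+\phi\theta$ on $\phi V$ and $V$ gives $c_\nu=|V|^2+df(V)$ and $c_\mu=-df(\phi V)$, so you need $Vf=(\phi V)f=0$.
\item The basicness of $\omega^T$ and $f$ along $\mathcal G$, on which both the identification of the K\"ahler form $e^{-f}\omega^T$ and the bundle formula for $\rho^\nabla$ rely.
\end{itemize}

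The paper does not derive any of this by hand. It imports from \cite[Theorem 5.6]{KS}:
\begin{itemize}
\item the decomposition $H=\mu\wedge d\mu+\nu\wedge d\nu+d^\phi\omega^T+\eta\wedge d\eta$, with $d^\phi\omega^T+\eta\wedge d\eta$ basic for $V,\phi V$;
\item the orthogonality of $V,\phi V,\xi$;
\item the values $c_\mu=0$, $c_\nu=1$, and $\theta^T=df$.
\end{itemize}
From these it reads off $\iota_Vd\eta=0$, $\iota_{\phi V}d\mu=\iota_Vd\nu=0$, the $(1,1)$-type (by comparison with $\phi dF$ for $F=\nu\wedge\mu+\omega^T$), and the basicness of $\omega^T$. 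To close the gap you must either invoke that theorem in full or prove that $V$ and $\phi V$ preserve $\phi$ and $f$.

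Your normalization $|V|=1$ by a homothety is legitimate, since the $\nabla$-Einstein condition is homothety invariant. It is exactly what the value $c_\nu=1$, and hence $\widetilde s^T=2e^f(c^2+1)$, tacitly requires.
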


\begin{proof}
Assume that $V=\theta^\sharp-\mathrm{grad}(f) \neq 0$ and denote $\mu := V^{\flat}$, $\nu := \phi V^{\flat}$. We decompose the tangent bundle as $TM = \langle V, \phi V, \xi \rangle \oplus \langle V, \phi V, \xi \rangle^{\perp}$, the first summand being denoted by $\mathcal{G}$. By \cite[Theorem 5.6]{KS}, the torsion can be written as
\begin{equation} \label{eqn:H}
  H = \mu \wedge d\mu + \nu \wedge d\nu + d^{\phi}\omega^{T} + \eta \wedge d\eta,  
\end{equation}
where $\omega^{T}$ is the transverse Hermitian form on $\mathcal{G}^\perp$ and $d^{\phi}\omega^{T} + \eta \wedge d\eta$ is basic with respect to $V$ and $\phi V$. Using that $d^{\phi}\omega^{T} + \eta \wedge d\eta$ is basic, $V, \phi V$ and $\xi$ are mutually orthogonal \cite[Theorem 5.6]{KS} and $\phi\xi = 0$, we have
\[
0 = \iota_{\xi}\iota_{V}(d^{\phi}\omega^{T} + \eta \wedge d\eta) = -\iota_{V} d\eta,
\]
which implies $\mathcal{L}_{V}\eta = 0$. Exploiting that $d\eta$ is of type $(1,1)$, we also obtain $\mathcal{L}_{\phi V}\eta = 0$. Also, using that $V$ and $\phi V$ are $\nabla$-parallel, $\iota_V H=d\mu$ and $\iota_{\phi V} H=d\nu$, and therefore $\iota_{V} d\mu = 0$ and $\iota_{\phi V} d\nu = 0$. Moreover, by Equation \eqref{eqn:H}, 
\[\iota_{\phi V} d\mu = 0,\  \ \  \iota_{V} d\nu = 0.\]
This implies, for instance, that $\mathcal{L}_V \nu=0$ and so $[V,\phi V]=0$ since $V$ is Killing. 
Therefore the distribution $\langle V, \phi V, \xi \rangle$ defines a $3$-dimensional foliation locally isometric to $\mathbb{R}^{3}$, and the following relations hold:
\[
\iota_{\xi} d\mu = 0,\,\, \iota_{\xi} d\nu = 0.
\]
Furthermore, using $F = \nu \wedge \mu + \omega^{T}$ and the expression of the torsion $H=\eta \wedge d\eta+ d^\phi F$, the identity
\[
 \mu \wedge \phi d\mu + \nu \wedge \phi d\nu + d^{\phi}\omega^{T}= \phi dF = \mu \wedge d\mu + \nu \wedge d\nu + d^{\phi}\omega^{T},
\]
immediately implies that $d\mu$, $d\nu$ and $d\eta$ belong to $\Omega^{(1,1)}(\mathcal{G}^{\perp})$.\par
We claim that the transverse Hermitian structure is basic. Indeed, using that
\[
0 = \iota_{\phi V} d^{\phi}\omega^{T} = -d\omega^{T}(V, \phi X, \phi Z)
\]
for all vector fields $X,Z$ and $\phi^{2} = -\mathrm{Id}$ on $\mathcal{G}^\perp$, this implies $\iota_{V} d\omega^{T} = 0$ and consequently $\mathcal{L}_{V}\omega^{T} = 0$. The same argument applied to $\phi V$ yields $\mathcal{L}_{\phi V}\omega^{T} = 0$. Hence we obtain the local structure of an $\mathbb{R}^{3}$-bundle over a $4$-dimensional Hermitian manifold $(\omega^{T}, \phi)$.\par
Denote by $\rho^B$ the Bismut Ricci form of $(\omega^{T}, \phi)$. Using a generalization of Theorem \ref{thm:BHE} to local $\R^{2k+1}$-bundles,
\[
\rho^\nabla=\rho^B-d(c_\mu \mu)-d(c_\nu \nu)-d(c \eta),
\]
where $c_\mu:= \frac{1}{2} g(d\mu, \omega^T)$ and $c_\nu:= \frac{1}{2} g(d\nu, \omega^T)$. By \cite[Theorem 5.6]{KS}, $c_\mu=0$ and $c_\nu=1$\footnote{Note that there is sign and normalizations discrepancy, due to different conventions.}. Therefore the $\nabla$-Einstein condition imposes that $c$ is constant and
\[
\rho^B=d \nu +c d\eta.
\]
Furthermore, again by \cite[Theorem 5.6]{KS} the transverse Lee form of $(\omega^{T}, \phi)$ satisfies $\theta^T=df$, and so $(\omega^{T}, \phi)$ is conformally K\"ahler. Denoted by $\tilde \omega^T$ the conformal K\"ahler form,  and using again the conformal invariance of $\rho^B$ in dimension $4$, we get that the vanishing of $\rho^\nabla$ is equivalent to
\begin{equation} \label{eqn:rho2}
    \tilde \rho^{T}=d \nu +c d\eta,
\end{equation}
where $\tilde \rho^{T}$ denotes the Ricci curvature of the transverse K\"ahler metric. Tracing \eqref{eqn:rho2} we get again that the transverse K\"ahler structure has positive scalar curvature given by
\[
\tilde s^T=2e^f(c^2+1).
\]
The closure of $H$ is therefore equivalent to 
\[
0=d\mu \wedge d\mu+ d\nu \wedge d\nu+d\eta \wedge d\eta+dd^c \omega^T=d\mu \wedge d\mu+ d\nu \wedge d\nu+d\eta \wedge d\eta+dd^\phi \left(\frac{\tilde s^T}{2(c^2+1)}\right)\wedge \tilde \omega^T,
\]
which can be slightly rewritten as 
\begin{equation} \label{eqn:dim7}
    0=d\mu \wedge d\mu+ (\tilde \rho^T -c d\eta) \wedge (\tilde \rho^T -c d\eta)+d\eta \wedge d\eta+dd^\phi \left(\frac{\tilde s^T}{2(c^2+1)}\right)\wedge \tilde \omega^T.
\end{equation}

\medskip

If we instead require that $V=0$, then the transverse geometry is conformally balanced, since $d\theta=d^2f=0$. Therefore, the $\nabla$-Einstein equation for $V=0$ is equivalent to the following system of equations
\begin{equation} \label{eqn:7dim}
    c d\eta=\rho^B, \ \ c^2=b=\text{const}, \ \ d\eta \wedge d\eta + dd^\phi F=0, \ \ \ \theta=df. 
\end{equation}

For $c=0$, Equation \eqref{eqn:7dim} reduces to 
\[
d\eta \wedge d\eta + dd^\phi F=0, \ \ \ \theta=df, \ \ \rho^B=0, \ \ g(d\eta,F)=0,
\]
that is, $M$ has the local structure of an $S^1$-bundle over a Calabi Yau with torsion conformally balanced $6$-dimensional manifold $N$, whose curvature $d\eta \in \Omega^{1,1}(N)$ satisfies
\[
d\eta \wedge d\eta + dd^\phi F=0, \ \ g(d\eta,F)=0.
\]
For $c \neq 0$, instead, Equation \eqref{eqn:7dim} reduces to 
\[
d\eta=\tfrac{1}{c}\rho^B, \ \ c^2=b=\text{const}, \ \  \frac{1}{c^2} \rho^B \wedge \rho^B + dd^\phi F=0, \ \ \ \theta=df, 
\]
that is, $M$ has the local structure of an $S^1$-bundle with curvature $d\eta=\tfrac{1}{\sqrt b}\rho^B$ over a conformally balanced $6$-dimensional manifold $N$ such that the Bismut scalar curvature $b$ is constant and positive, and the following equation holds
\[
 \frac{1}{b}  \rho^B \wedge \rho^B + dd^\phi F=0.
\]
\end{proof}

\begin{rmk}
 Observe that although equation \eqref{eqn:dim7} is a natural generalization of equation \eqref{eqn:4dim}, it does not reduce to a \emph{nice} scalar equation as for the $5$-dimensional case. However, when $c=0$ (which, unlike the $5$-dimensional case, is not an excluded value), then 
\[
0=dH=d\mu \wedge d\mu+ \tilde \rho^T \wedge \tilde \rho^T +d\eta \wedge d\eta+dd^\phi \left(\frac{\tilde s^T}{2}\right)\wedge \tilde \omega^T,
\]
which, up to take the Hodge star with respect to the K\"ahler metric, coincides with 
\begin{equation} \label{eqn:gen_box}
  \tilde \Delta \tilde s^T +\frac{(\tilde s^T)^2}{2}-|\tilde {\mathrm{Ric}}|^2-|d \mu |^2-|d\eta|^2=0,  
\end{equation}
which is a natural generalization of the Box equation \eqref{eqn:box}. Observe that when $d\eta=0$, examples of $7$-dimensional $\nabla$-Einstein can be constructed from the known examples in dimension $5$ and $6$. For instance, if we take $L^{a,b,c}$ and we perform a trivial $\mathbb{T}^2$ bundle with $d\mu=0$ and $d\eta=0$, we immediately get that equation \eqref{eqn:gen_box} is satisfied. Similarly, if we take the $6$-dimensional examples constructed in \cite{ALL}, the $4$-dimensional geometry satisfies 
\begin{equation} 
  \tilde \Delta \tilde s^T +\frac{(\tilde s^T)^2}{2}-|\tilde {\mathrm{Ric}}|^2-|d \mu |^2=0,  
\end{equation}
for an anti-self dual form $d\mu$. Trivial $S^1$-bundles over that examples (with $d\eta=0$), give other solutions to the generalized Box equation \eqref{eqn:gen_box}.

\end{rmk}
\section{Geometric flows}
In this section we investigate geometric flows on manifolds with almost contact metric structures. To the best of our knowledge, the literature on such geometric flows is rather scarce. The main goal of this section is to first develop a general framework for studying flows of almost contact metric structures using the formalism introduced in \cite{Fadel2022} and secondly we specialize to case of Sasaki structures with torsion leading to an analogue of the pluriclosed flow.

\subsection{General framework for flows of almost contact metric structures}

Let $(M^{2n+1},\varphi,\xi,\eta,g)$ be a general almost contact metric structure. Equivalently, this means that $M$ admits a $\mathrm{U}(n)':=1\times \mathrm{U}(n)$-structure, where it is viewed as a subgroup of $\mathrm{SO}(2n+1)$ \cite{Bl,CM}. The infinitesimal deformation space of a $\mathrm{U}(n)'$-structure is isomorphic to the representation $\mathfrak{m}$ defined by 
\begin{equation*}
\mathfrak{gl}(2n+1,\R)= \mathfrak{u}(n)'\oplus \mathfrak{m},    
\end{equation*}
i.e. $\mathfrak{m}$ is the orthogonal complement of $\mathfrak{u}(n)'\subset\mathfrak{so}(2n+1)\subset\mathfrak{gl}(2n+1,\R)$.
To describe geometric flows of $\mathrm{U}(n)'$-structures, we need a more concrete description of the space $\mathfrak{m}$, which we shall view as a subbundle of $\mathrm{End}(TM) \cong T^*M\otimes T^*M$.

We begin by first identifying the tangent space at a fixed point $x\in M$ with the $\mathrm{U}(n)'$-module $\widehat{V}:=\R \oplus V$, where $V\cong \mathbb{R}^{2n}$ denotes the standard $\mathrm{U}(n)$ representation and the trivial module is spanned by the Reeb vector field $\xi$. We always implicitly identify vectors and $1$-forms using the metric $g$.
Following \cite{Salbook}, we denote by $[\Lambda^{p,q}]$ the real vector space of $(p+q)$-forms which underlies the complexified module $\Lambda^{p,q}(V_{\mathbb{C}})\oplus \Lambda^{q,p}(V_{\mathbb{C}})$. Recall that $F\in [\Lambda^{1,1}]$ and we denote its orthogonal complement by $[\![\Lambda^{1,1}_0]\!]$. A simple calculation then shows there is a splitting into irreducible $\mathrm{U}(n)'$-modules:
\begin{gather*}
    \Lambda^2(\widehat{V}) \cong \{ \eta \wedge \alpha\ |\ \alpha\in \Lambda^1(V) \} \oplus \langle F\rangle \oplus [\![\Lambda^{1,1}_0]\!] \oplus [\Lambda^{2,0}].
\end{gather*}
Similarly, for the space of symmetric $2$-tensors, we have:
\begin{gather*}
    S^2(\widehat{V}) \cong 
    \langle \eta \otimes \eta \rangle \oplus  \{ \eta \odot \alpha\ |\ \alpha\in \Lambda^1(V) \} \oplus  \langle g_T\rangle \oplus S^2_+ \oplus S^2_-, 
\end{gather*}
where $\odot$ denotes the symmetric product, $S^2_+\subset S^2(V)$ is the space of traceless $\varphi$-invariant symmetric $2$-tensors, i.e. $\kappa(\cdot,\cdot)=\kappa(\varphi\cdot,\varphi\cdot)$ and $\mathrm{tr}_g\kappa=0$, and $S^2_-\subset S^2(V)$ is the space of $\varphi$-anti-invariant symmetric $2$-tensors, i.e. $\kappa(\cdot,\cdot)=-\kappa(\varphi\cdot,\varphi\cdot)$. It is worth noting that as $\mathrm{U}(n)'$-modules, the following spaces are isomorphic:
\begin{gather*}
    \mathfrak{su}(n)\cong S^2_+\cong [\![ \Lambda^{1,1}_0]\!],\\
    V\cong [\Lambda^{1,0}]\cong \{ \eta \wedge \alpha\ |\ \alpha\in \Lambda^1(V) \} \cong \{ \eta \odot \alpha\ |\ \alpha\in \Lambda^1(V) \}.
\end{gather*}
Thus, as a $\mathrm{U}(n)'$ representation we have:
\begin{equation*}
    \mathfrak{m} \cong 2 \mathbb{R}\oplus 2[\Lambda^{1,0}] \oplus [\![\Lambda^{1,1}_0]\!] 
      \oplus [\Lambda^{2,0}]\oplus  S^2_-  ,
\end{equation*}
which is consistent with the dimensional count: 
\[
\mathrm{dim}(\mathfrak{m})=(3n+1)(n+1)=2(1)+2(2n)+(n^2-1)+n(n-1)+n(n+1).\]
A deformation of a $\mathrm{U}(n)'$-structure is therefore determined by choosing a tensor in each irreducible summand of $\mathfrak{m}$. It will be important, however, to distinguish the symmetric and skew-symmetric components of $\mathfrak{m}\subset \mathfrak{gl}(2n+1,
\R)\cong \Lambda^2(\widehat{V})\oplus S^2(\widehat{V})
$ in the subsequent.

Next we introduce the contraction operator $\diamond$, defined by
\[
(A \diamond \alpha)_{i_1\cdots i_l}^{j_1\cdots j_k}= -\sum_{p=1}^kA_{m j_p} \alpha^{j_1\cdots m \cdots j_k}_{i_1\cdots i_l}+\sum_{q=1}^lA_{i_q m}\alpha^{j_1\cdots j_k}_{i_1\cdots m\cdots i_l },
\]
where $A=A_{ij}$ is a $(0,2)$-tensor, $\alpha=\alpha_{i_1\cdots i_l}^{j_1 \cdots j_k}$ is a $(k,l)$-tensor, and the index $m$ in the above expression is in the $j_p$ and $i_q$ position in $\alpha$. The operator $\diamond$ corresponds to the infinitesimal action of the group $\mathrm{GL}(2n+1,\R)$ on the tensor $\alpha$, see \cite{Fadel2022}.
Having defined $\mathfrak{m}$ and $\diamond$, we can now describe the general $\mathrm{U}(n)'$-flows.
\begin{prop}
Denoting by $\Phi=(\varphi,\xi,\eta,g)$ the data of the almost contact metric structure, a general $\mathrm{U}(n)'$-flow can be expressed as
\begin{equation}\label{equ: general flow A}
    \frac{\partial}{\partial t} \Phi = A \diamond \Phi,
\end{equation}
where $A$ corresponds to a section of the associated bundle to $\mathfrak{m}$. Hence, a general $\mathrm{U}(n)'$ deformation is determined by the choice of the tensor $A$ which can be expressed as
\begin{equation}
    A = \Big(f_0 \eta \otimes \eta + h_0 g_T + \eta \odot \alpha_1 + \alpha^2_+ +\alpha^2_-\Big)+ \Big(\eta \w \beta_1 +\beta^{2,0}\Big) \in  S^2(T^*M)\oplus\Lambda^2(T^*M),\label{eq: general A}
\end{equation}
where $f_0,h_0$ are functions, $\alpha_1,\beta_1$ are horizontal $1$-forms, $\alpha^2_\pm$ are symmetric $2$-tensors in $S^2_{\pm}$, and $\beta^{2,0}$ a horizontal $2$-form such that $\varphi(\beta)=-\beta$.    
\end{prop}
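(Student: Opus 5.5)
The plan is to identify the tangent space to the space of $\mathrm{U}(n)'$-structures at $\Phi$ with the orbit directions of $\mathrm{GL}(2n+1,\R)$, and then use the splitting $\mathfrak{gl}(2n+1,\R)=\mathfrak{u}(n)'\oplus\mathfrak{m}$. A $\mathrm{U}(n)'$-structure at a point is a $\mathrm{GL}(2n+1,\R)$-orbit element $\Phi=h\cdot\Phi_{\mathrm{std}}$. It is determined by the reduction of the frame bundle, so the space of such structures at a point is the homogeneous space $\mathrm{GL}(2n+1,\R)/\mathrm{U}(n)'$. Any smooth one-parameter family $\Phi_t$ can therefore be written as $\Phi_t=h_t\cdot\Phi_0$ with $h_t$ a smooth curve of sections of $\mathrm{GL}(TM)$. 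Differentiating gives $\partial_t\Phi=A\diamond\Phi$ with $A=\dot h_t h_t^{-1}$ (after lowering an index with $g$). This uses that $\diamond$ is, by construction, the infinitesimal action of $\mathfrak{gl}$ on tensors, as recalled from \cite{Fadel2022}; the compatible action on the four pieces $(\varphi,\xi,\eta,g)$ follows because each transforms tensorially.

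The next step is to show that only the $\mathfrak{m}$-component of $A$ matters. The stabilizer algebra of $\Phi$ is exactly $\mathfrak{u}(n)'$. Indeed, $A\diamond g=0$ forces $A$ skew, $A\diamond\xi=0$ forces $A\xi=0$, and $A\diamond\varphi=0$ forces $[A,\varphi]=0$. Skew endomorphisms of $\xi^\perp$ commuting with $\varphi$ are precisely $\mathfrak{u}(n)$. Hence $A\diamond\Phi$ depends only on the projection of $A$ onto $\mathfrak{m}$, and replacing $A$ by this projection gives the claimed form. Conversely, the map $\mathfrak{m}\to T_\Phi(\text{orbit})$, $A\mapsto A\diamond\Phi$, is injective, so every variation is attained uniquely.

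It then remains to write a general element of $\mathfrak{m}$ explicitly, and this is the step requiring care. Decompose $\mathfrak{gl}=S^2(\widehat V)\oplus\Lambda^2(\widehat V)$. Since $\mathfrak{u}(n)'\subset\Lambda^2$, all of $S^2(\widehat V)$ lies in $\mathfrak{m}$, using the decomposition $\langle\eta\otimes\eta\rangle\oplus\{\eta\odot\alpha\}\oplus\langle g_T\rangle\oplus S^2_+\oplus S^2_-$ stated above. This gives the terms $f_0\eta\otimes\eta+h_0g_T+\eta\odot\alpha_1+\alpha^2_++\alpha^2_-$. In the skew part, $\mathfrak{u}(n)'$ corresponds to $\langle F\rangle\oplus[\![\Lambda^{1,1}_0]\!]$, that is, to the $[\Lambda^{1,1}]$ horizontal forms. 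Its orthogonal complement in $\Lambda^2(\widehat V)$ is therefore $\{\eta\wedge\beta_1\}\oplus[\Lambda^{2,0}]$, and $[\Lambda^{2,0}]$ is characterized by $\varphi\beta=-\beta$ under the stated sign convention.

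Finally, the decomposition is checked against the dimension count $(3n+1)(n+1)$ already verified, which confirms that no summand is missing. The main obstacle is verifying the sign convention that $[\Lambda^{2,0}]$ corresponds to $\varphi(\beta)=-\beta$. I would check this directly on $\beta=e^1\wedge e^2-e^3\wedge e^4$-type forms in a $\varphi$-basis.
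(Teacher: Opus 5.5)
Your proposal is correct and follows essentially the same route as the paper, which gives no separate proof and takes the statement directly from the $\diamond$-formalism (the infinitesimal $\mathrm{GL}(2n+1,\R)$-action, whose stabilizer at $\Phi$ is $\mathfrak{u}(n)'$), combined with the splitting $\mathfrak{gl}(2n+1,\R)=S^2\oplus\Lambda^2$, the irreducible decomposition of $\mathfrak{m}$ and the dimension count $(3n+1)(n+1)$; your stabilizer computation and orbit argument make explicit what the paper leaves implicit. One small correction to your planned sign check: in a $\varphi$-basis ordered as $\{x_1,\varphi x_1,x_2,\varphi x_2\}$, the form $e^1\wedge e^2-e^3\wedge e^4$ is $\varphi$-invariant (a primitive $(1,1)$-form), so the right test form is $e^1\wedge e^3-e^2\wedge e^4$, which satisfies $\beta(\varphi\cdot,\varphi\cdot)=-\beta$, i.e.\ $\varphi\beta=-\beta$ in the paper's convention.
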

The reader might find it helpful to compare  with the case of $\SU(3)$-flows on $6$-manifolds considered in \cite{fadelflows2} which has a similar decomposition. Note that by construction we always have $\big(\langle F\rangle\oplus [\![\Lambda^{1,1}_0]\!]\big) \diamond \Phi=0$. For a natural $\mathrm{U}(n)'$-flow, the tensor $A$ is typically chosen among the second-order differential invariants of $\Phi$ i.e. the covariant derivative of the intrinsic torsion and the Riemann curvature tensor.

While the general $\mathrm{U}(n)'$ variation is determined by the tensor $A$, each irreducible component of $A$ may act either trivially or non-trivially on different components of the tuple $\Phi$. More precisely, we have:
\begin{lem}\label{lemma: deformation}
    Given $A$ as in \eqref{eq: general A}, the following holds:
\begin{enumerate}
    \item $\mathrm{ker}(A\diamond g) = \{\eta\wedge\beta_1, \beta^{2,0}\}$
    \item $\mathrm{ker}(A\diamond \xi) = \{\eta\otimes (f_0\eta+\alpha_1+\beta_1)\}^c$
    \item $\mathrm{ker}(A\diamond \eta) = \{ (f_0\eta+\alpha_1-\beta_1)\otimes \eta \}^c$
    \item $\mathrm{ker}(A\diamond \varphi) = \{ f_0\eta \otimes \eta ,h_0g^T,\alpha^2_+\}$
\end{enumerate}
where the superscript $^c$ denotes the complement of the components in $A$.
\end{lem}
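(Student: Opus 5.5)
This is a direct pointwise computation with the operator $\diamond$. For each irreducible piece of $A$ in \eqref{eq: general A}, I will evaluate its action on each of the tensors $g$ (type $(0,2)$), $\xi$ (type $(1,0)$), $\eta$ (type $(0,1)$) and $\varphi$ (type $(1,1)$), and record which pieces act trivially. I work in a $\phi$-basis $\{\xi,x_i,\phi x_i\}$ and identify vectors with covectors via $g$. In this frame $\eta$ and $\xi$ both have a single component $1$ in the Reeb slot, and $\varphi$ has the block form $0\oplus J$.

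\textbf{Action on $g$.} The definition of $\diamond$ gives $(A\diamond g)_{ij}=A_{im}g_{mj}+A_{jm}g_{im}=A_{ij}+A_{ji}=2A^{\mathrm{sym}}_{ij}$. Hence exactly the skew pieces $\eta\wedge\beta_1$ and $\beta^{2,0}$ act trivially on $g$, while the symmetric pieces act faithfully on $S^2$. This proves item 1.

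\textbf{Action on $\xi$ and $\eta$.} For a vector, $(A\diamond\xi)^j=-A_{mj}\xi^m=-A(\xi,\cdot)$. For a covector, $(A\diamond\eta)_i=A_{im}\eta^m=A(\cdot,\xi)$. I then contract each summand with $\xi$ in the relevant slot, using $\eta(\xi)=1$, the fact that $\alpha_1,\beta_1$ are horizontal, and the fact that $g_T,\alpha^2_\pm,\beta^{2,0}$ vanish on $\xi$.
\begin{itemize}
\item Contracting in the first slot picks out $f_0\eta+\alpha_1+\beta_1$, up to the normalisation of $\odot$ and $\wedge$.
\item Contracting in the second slot picks out $f_0\eta+\alpha_1-\beta_1$, because $\eta\wedge\beta_1(\cdot,\xi)=-\beta_1$ while $\eta\wedge\beta_1(\xi,\cdot)=\beta_1$.
\end{itemize}
So only the components $\eta\otimes(f_0\eta+\alpha_1+\beta_1)$, respectively $(f_0\eta+\alpha_1-\beta_1)\otimes\eta$, act; all other components lie in the kernel. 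This gives items 2 and 3.

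\textbf{Action on $\varphi$.} Here $(A\diamond\varphi)=[\,\cdot\,,\varphi]$-type: $A\diamond\varphi=-A\varphi+\varphi A$ as endomorphisms, up to sign and ordering conventions that I will fix once from the index formula. I then check each piece.
\begin{itemize}
\item The pieces $\eta\otimes\eta$ and $g_T$ commute with $\varphi$, since $\varphi\xi=0$ and $\varphi$ preserves $\ker\eta$. They therefore act trivially.
\item A horizontal symmetric tensor $\kappa$ commutes with $\varphi$ if and only if $\kappa(\varphi\cdot,\varphi\cdot)=\kappa$, so $\alpha^2_+$ acts trivially. The piece $\alpha^2_-$ anticommutes with $\varphi$, so it gives $\pm2\alpha^2_-\varphi\neq0$.
\item Similarly, $\beta^{2,0}$ anticommutes with $\varphi$ and acts nontrivially.
\item The mixed pieces $\eta\odot\alpha_1$ and $\eta\wedge\beta_1$ produce terms like $\xi\otimes\varphi\alpha_1$ and $\eta\otimes\varphi\alpha_1$. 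These are nonzero whenever the forms are nonzero, and the $\odot$ and $\wedge$ versions cannot cancel each other.
\end{itemize}
This yields item 4.

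\textbf{Main obstacle.} The only delicate point is sign and ordering conventions in $\diamond$, especially for $\varphi$. I will handle it by computing a single $2\times2$ block example to fix the signs. Injectivity on the non-kernel pieces then follows from irreducibility together with one nonzero evaluation.
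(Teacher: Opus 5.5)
Your proposal is correct and is essentially the paper's argument: a direct evaluation of $\diamond$ on each summand of $A$. The paper phrases the same facts more conceptually: $\Lambda^2$ preserves $g$; only pieces of the form $\eta\otimes\kappa$ (resp.\ $\kappa\otimes\eta$) act on $\xi$ (resp.\ $\eta$); and $\varphi$ is fixed by $\langle\eta\otimes\eta\rangle\oplus\langle g^T\rangle\oplus S^2_+$. Your check that the two copies of $[\Lambda^{1,0}]$ cannot cancel fills the one point the paper leaves implicit; note also that the index formula gives exactly $A\diamond\varphi=[\varphi,A^{T}]$, which has the same kernel as $[\varphi,A]$ because $\varphi$ is $g$-skew.
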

\begin{proof}
    The result follows by a direct calculation using the definition of $A$ and $\diamond$. For instance, since $g$ is preserved by $\mathrm{SO}(2n+1)$, it follows that $\Lambda^2(T^*M)\diamond g=0$. Similarly, any $(0,2)$-tensor which contracts non-trivially with $\xi$ under the operator $\diamond$ must be of the form $\eta\otimes\kappa$ for some $1$-form $\kappa$. An analogous argument applies for $\eta$. Lastly, since $\varphi$ defines an almost complex structure on the horizontal distribution, it follows that it is invariant under $\langle g^T\rangle\oplus S^2_+\subset \mathfrak{gl}(n,\C)$. Furthermore, from the fact that $\varphi(\xi)=0$, we also know that it is invariant under $\diamond$ by  $\eta\otimes\eta$. This concludes the proof.
\end{proof}
An important application of the above lemma is to define geometric flows which only evolve part of the data of $\Phi$. For instance, if we want a $\mathrm{U}(n)'$-flow which fixes the Reeb vector field $\xi$ then it suffices to choose $A$ so that $f_0=0$ and $\alpha_1=-\beta_1$. If furthermore, we also want to fix the horizontal distribution then we have to set $\alpha_1=0$ and the flow reduces completely to the fixed transverse distribution. To the best of our knowledge, the study of flows of almost contact metric structures is rather limited in the literature; the only example of $\mathrm{U}(n)'$-flow we are aware of is the following:\vskip0.8em

\textbf{Example: The Sasaki Ricci flow.}
The Sasaki Ricci flow was introduced in \cite{Smoczyk2010} as the odd dimensional analogue to the K\"ahler Ricci flow. In this case, we have a Sasaki structure on $M$ i.e. the underlying almost contact metric structure is normal, $\xi$ is a unit  Killing vector field and $d \eta=2 F$. Equivalently, the cone metric on $M$ is K\"ahler. The Sasaki Ricci flow is then defined by: 
\begin{equation*}
    \frac{\partial}{\partial t}g^T(t)=-(\mathrm{Ric}^T_{g(t)}-\lambda g^T(t)),
\end{equation*}
where $\lambda\in\{0,\pm1\}$ is a constant chosen depending on the manifold. Thus, one evolves the transverse metric by the (translated) Ricci flow. 
In our above framework, this can be viewed as the case when $A=-\frac{1}{2}(\mathrm{Ric}^T-\lambda g^T)+ d^cf\otimes \eta $, where $f$ is a basic function (depending on $g$) chosen so that the condition $g^T(\cdot ,\varphi \cdot )=:F=\frac{1}{2}d\eta$  is preserved along the flow, see \cite{Smoczyk2010}. Intuitively, $f$ can be thought of as the deformation of the transverse K\"ahler potential.
It is easy to see from Lemma \ref{lemma: deformation} that $\xi$ is fixed under this flow while $ g, \eta$ and $\varphi$ all vary. If one considers the local quotient of $M$ by the group action generated by $\xi$, then the induced complex structure determined by $\varphi$ is invariant under the Sasaki Ricci flow - thus, this essentially corresponds to the K\"ahler Ricci flow. 

We now return to the general case of $\mathrm{U}(n)'$-flows. The intrinsic torsion tensor $T\in \Lambda^1\otimes \mathfrak{m}$  of a $\mathrm{U}(n)'$-structure is defined by the relation $\nabla_k\Phi=T_k\diamond\Phi$. It follows from the recent result in \cite{fadelflows2} that for $$A=-\mathrm{Ric}(g)+\mathrm{div}(T)+\mathrm{lower \ order\ terms},$$ 
where $\mathrm{div}(T)_{ij}:=\nabla_kT_{k,ij}$, the flow given by \eqref{equ: general flow A} exists for short time and is unique; this is the so-called Ricci harmonic flow. Moreover, this flow is also analytically well-behaved in the sense that $T$ evolves by a heat type flow. However, the flow does not in general preserve any special property of the initial data $\Phi_0$. For instance, even if $\xi(0)=\xi_0$ is a Killing vector field, $\xi(t)$ need not remain Killing. Likewise, the flow does not, in general, preserve the normality condition. We shall next describe modifications to address these two issues.

\begin{lem}\label{lemma: ricci with killing reeb}
    Let $(M^{2n+1},\varphi,\xi,\eta,g)$ be an almost contact metric structure with Killing Reeb vector field $\xi$ then its Ricci curvature is given by     \begin{equation}\label{equ: ricci s1}
        \mathrm{Ric}(g)=\frac{1}{4}g(d\eta,d\eta)\eta \otimes\eta + \frac{1}{2} \eta \odot \delta^T d\eta + \mathrm{Ric}(g^T)-\frac{1}{2} \sum_{i=1}^{2n}\iota_{e_i}d\eta \otimes\iota_{e_i}d\eta,
    \end{equation}
    where $\{e_i\}$ denotes a local orthonormal frame of the horizontal distribution and $\delta^T$ denotes the horizontal co-differential defined by $\delta^T d\eta=
    \iota_\xi  *d(\iota_\xi  * d\eta)$, where $*$ is the Hodge star operator on $M$.
\end{lem}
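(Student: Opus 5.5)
The plan is to regard $g$ as a Riemannian submersion-type metric with totally geodesic one-dimensional fibres, and to apply O'Neill's formulas (as in \cite[Theorem~2.5.16]{BG}) rather than recompute curvature from scratch. Since $\xi$ is Killing and $g(\xi,\xi)=1$, we have $\nabla^g_\xi\xi=0$, so the leaves of $\mathcal F_\xi$ are geodesics and the O'Neill tensor $T$ vanishes. Locally, by Proposition~\ref{prop: local model}, we may quotient by the flow of $\xi$; we then interpret $g^T$ as a metric on the local leaf space and $\mathrm{Ric}(g^T)$ as its Ricci tensor pulled back to horizontal vectors. Everything is tensorial, so it suffices to evaluate $\mathrm{Ric}(g)$ on the pairs $(\xi,\xi)$, $(\xi,x)$ and $(x,y)$ with $x,y$ horizontal.

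First we identify the O'Neill tensor $A$. For horizontal $x,y$,
\[
A_xy=\tfrac12\mathcal V[x,y]=-\tfrac12\,d\eta(x,y)\,\xi ,
\]
using $\eta([x,y])=-d\eta(x,y)$. Moreover, Killing implies $g(\nabla^g_x\xi,y)=\tfrac12 d\eta(x,y)$, so $A_x\xi=\tfrac12(\iota_x d\eta)^\sharp$ up to the sign convention. Note that this uses only that $\xi$ is unit Killing; normality is never needed.

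Next we plug into the three O'Neill Ricci formulas with $T=0$, as follows.
\begin{itemize}
\item For $(\xi,\xi)$: $\mathrm{Ric}(\xi,\xi)=\sum_i|A_{e_i}\xi|^2=\tfrac14\sum_i|\iota_{e_i}d\eta|^2=\tfrac14 g(d\eta,d\eta)$. Here $g(d\eta,d\eta)$ is understood with the convention $\sum_{i,j}d\eta(e_i,e_j)^2$, and the normalization must be matched to the stated coefficient $\tfrac14$.
\item For $(x,\xi)$: the mixed term is $\mathrm{Ric}(x,\xi)=g((\check\delta A)x,\xi)$, i.e. minus the horizontal divergence of $A$. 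This should reduce to a multiple of $(\delta^T d\eta)(x)$, and we expect the multiple to be $\tfrac12$.
\item For $(x,y)$: $\mathrm{Ric}(x,y)=\mathrm{Ric}^T(x,y)-2\sum_i g(A_xe_i,A_ye_i)$. Since $g(A_xe_i,A_ye_i)=\tfrac14 d\eta(x,e_i)d\eta(y,e_i)$, this gives the term $-\tfrac12\sum_i\iota_{e_i}d\eta\otimes\iota_{e_i}d\eta$ evaluated on $(x,y)$.
\end{itemize}
Summing the three contributions with $\eta\odot\beta=\eta\otimes\beta+\beta\otimes\eta$ yields \eqref{equ: ricci s1}.

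The main obstacle is the mixed term: computing $\sum_i g((\nabla_{e_i}A)_{e_i}x,\xi)$ and identifying it with the stated $\delta^T d\eta=\iota_\xi * d(\iota_\xi * d\eta)$. For this we will use that $d\eta$ is basic ($\iota_\xi d\eta=0$ and $\mathcal L_\xi d\eta=0$, since $\mathcal L_\xi\eta=0$ for unit Killing $\xi$). We will also use that on $M$, $*\,\alpha=\pm\eta\wedge *_T\alpha$ for horizontal $\alpha$, so that $\iota_\xi*$ acts as $\pm *_T$ on basic forms. Then $\delta^T d\eta=\pm *_Td*_Td\eta$ is the transverse codifferential, and we will fix the signs by a sanity check on the Heisenberg group, where $d\eta=\sum dx_i\wedge dy_i$ and both sides vanish, together with a check on a case with nonconstant coefficients. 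As a fallback, we can compute directly in a $\phi$-basis using Koszul's formula, with $\nabla^g_\xi x=[\xi,x]+\tfrac12(\iota_xd\eta)^\sharp$.
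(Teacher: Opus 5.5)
Your route is the paper's. The paper's proof is only an appeal to O'Neill's Ricci formulas (Besse, Prop.~9.36), with $T=0$ because the $\xi$-orbits are geodesics and with $A$ identified with $d\eta$. Your $(\xi,\xi)$ and $(x,y)$ components are correct, provided $g(d\eta,d\eta)=\sum_{i,j}d\eta(e_i,e_j)^2$ and $\eta\odot\beta=\eta\otimes\beta+\beta\otimes\eta$. These are the conventions that make the paper's later identity $\delta^Td\eta=\delta d\eta-\tfrac12 g(d\eta,d\eta)\eta$ consistent.

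The step you leave open needs a warning, because the sign you wrote is wrong and your proposed check cannot detect it.

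\textbf{The sign.} With $A_xy=-\tfrac12 d\eta(x,y)\xi$ one finds $g((\nabla_zA)_xy,\xi)=-\tfrac12(\nabla_zd\eta)(x,y)$ for horizontal $x,y,z$. The correct mixed formula is $\mathrm{Ric}(x,\xi)=+\sum_i g((\nabla_{e_i}A)_{e_i}x,\xi)=-\tfrac12\sum_i(\nabla_{e_i}d\eta)(e_i,x)=\tfrac12(\delta d\eta)(x)$. That is plus the horizontal divergence of $A$, not minus. With $\check\delta A=-\sum_i(\nabla_{e_i}A)_{e_i}$, your expression $g((\check\delta A)x,\xi)$ would give $-\tfrac12\delta^Td\eta(x)$.

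\textbf{The check.} On the Heisenberg group both sides vanish, so that test cannot see the sign.

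\textbf{A cleaner fix.} You can avoid O'Neill's mixed formula and its sign conventions for both vertical components at once.
\begin{itemize}
\item Write $B=\nabla\xi$, so $g(BX,Y)=\tfrac12 d\eta(X,Y)$. Then $R(X,Y)\xi=(\nabla_XB)Y-(\nabla_YB)X$.
\item Trace this. The term $\sum_a g((\nabla_YB)E_a,E_a)$ vanishes because $\nabla_YB$ is skew, so $\mathrm{Ric}(\cdot,\xi)=\tfrac12\delta d\eta$ with $\delta=-\sum_a\iota_{E_a}\nabla_{E_a}$. This is the Bochner identity for Killing fields.
\item Take $\mathrm{vol}=\eta\wedge\mathrm{vol}^T$. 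Then $*\alpha=(-1)^k\eta\wedge *_T\alpha$ for horizontal $k$-forms, and $\delta=*d*$ on $2$-forms in odd dimension.
\item These give $\delta d\eta=-*_Td*_Td\eta+\tfrac12 g(d\eta,d\eta)\eta$. Here $-*_Td*_Td\eta$ is exactly the paper's $\iota_\xi *d(\iota_\xi *d\eta)$.
\end{itemize}
This produces the $\tfrac14 g(d\eta,d\eta)\,\eta\otimes\eta$ and $\tfrac12\,\eta\odot\delta^Td\eta$ terms together with the correct sign. O'Neill's horizontal formula then supplies the $(x,y)$ block as you describe.
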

\begin{proof}
    This follows from O'Neill's formulae, see \cite[Proposition 9.36]{Besse}. The O'Neill tensor $T$ vanishes since the orbits are totally geodesics, while the O'Neill tensor $A$ can be identified with the curvature form $d\eta$. The proof is simply a matter of rewriting the expressions given in \cite{Besse}.
\end{proof}

Suppose we start with an almost contact metric structure $\Phi_0=(\varphi_0,\xi_0,\eta_0,g_0)$ with Killing Reeb vector field $\xi_0$. The problem is that if we define an arbitrary $\mathrm{U}(n)'$-flow then the Reeb vector field $\xi(t)$ does not remain Killing in general. On the other hand, the initial vector field $\xi_0$ will remain Killing for natural geometric flows (i.e. with $A$ defined in terms of differential invariants of $\Phi$) but its length $g(t)(\xi_0,\xi_0)$ will not stay constant along the flow in general - compare for instance with the Ricci flow on a round sphere: the length of Killing vector fields will go to zero as the sphere shrinks to a point. So we want the metric $g$ to evolve by the Ricci flow to highest order (in order to have short time existence and uniqueness) and we also want $g(t)(\xi_0,\xi_0)=1$ for all $t$. Thus, we need to consider a normalized version of Ricci flow.

Suppose that we have a general  Riemannian metric invariant under the action generated by a vector field $X$. The metric can then be expressed as:
\[
g=u^2\eta^2+g^T,
\]
where $\eta$ is the connection $1$-form for the Killing vector field $X$, i.e. $\eta:=u^{-2}g(X,\cdot)$, and $u:=g(X,X)^{\frac{1}{2}}$. Note that $u$ is a basic function since $X$ is Killing. Using O'Neill formulae, one can show that the vertical component of the Ricci curvature is given by
\begin{equation*}
\mathrm{Ric}(g)(X,X)=\frac{1}{2}\delta d(u^2)+2|\nabla u|^2+\frac{u^4}{4}g(d\eta,d\eta).
\end{equation*}
Observe that when $X$ has unit length i.e. $u=1$, we indeed recover the coefficient of the $\eta\otimes\eta$ term in the formula of Lemma \ref{lemma: ricci with killing reeb} (indeed the O'Neill tensor $T$ can be identified with $\nabla u$). The above suggests to consider the modified Ricci flow:
\begin{equation}\label{equ: modified ricci flow unit}
\frac{\partial}{\partial t} g= -2\mathrm{Ric}(g)+\frac{u^2}{2}g(d\eta,d\eta) g + \cdots,
\end{equation}
where $\cdots$ denote lower order terms not involving $\eta\otimes\eta$. Note that the added term is of lower order and as such the principal symbol of the flow is unchanged. Thus, the proof of short time existence and uniqueness follows directly from the theory of Ricci flow. In particular, the uniqueness part implies that $X$ is a symmetry of the flow.
Contracting both sides of \eqref{equ: modified ricci flow unit} with $X\otimes X$, we see that $u$ evolves by:
\[
\frac{\partial}{\partial t} (u^2)= -\delta d(u^2)-4|\nabla u|^2,
\]
where $-\delta d(u^2)=\Delta(u^2)$ is the analyst Laplacian. 
Thus, the function $u^2$ evolves by a  parabolic flow under this normalized Ricci flow. Suppose that at $t=0$ we have $u(0)=1$, then it follows by the maximum principle, or by uniqueness of solutions to parabolic equations, that $u(t)=1$. In other words, $X$ is a unit length Killing vector field for $g(t)$. Thus, this allows us to reduce the flow for $g$ to a flow for the pair $\eta$ and $g^T$.
It is worth noting that there exist other normalized versions of the Ricci flow in the literature: the most well-known probably being the volume preserving one, instead our normalization here preserves the length of $X$ if it is initially constant.

We can now apply the above to flows of $\mathrm{U}(n)'$-structures by setting $X=\xi_0$ under the hypothesis that the initial Reeb vector field is unit Killing. With the above normalized Ricci flow for the metric, we have $g(t)(\xi_0,\xi_0)=1$. We can set $\xi(t)=\xi_0$, i.e. the Reeb vector field is invariant, which implies that $f_0=0$ and $\alpha_1=-\beta_1=-\delta^Td\eta$ in \eqref{eq: general A}. Thus, using Lemma \ref{lemma: ricci with killing reeb}, the flow \eqref{equ: modified ricci flow unit} for $g(t)$ becomes equivalent to the pair:
\begin{align}
    \frac{\partial}{\partial t} \eta &= -\delta^T d \eta+\cdots\label{equ: evol eta 1}\\
    \frac{\partial}{\partial t} g^T &= -2\mathrm{Ric}(g^T)+ \sum_{i=1}^{2n}\iota_{e_i}d\eta \otimes\iota_{e_i}d\eta+\cdots\label{equ: evol ricT}
\end{align}
By the above construction, this flow is clearly equivalent to the Ricci flow of $g(t)$ to highest order, and hence we immediately obtain short time existence and uniqueness from the general theory of Ricci flow. Observe that since $\xi_0$ is Killing and $\eta=\xi_0^\flat$, it follows that $\delta\eta=0$ and hence $-\delta^Td\eta=-\Delta\eta$, where $\Delta$ denotes the transverse Hodge Laplacian. Note also that since $d\eta$ is basic, $\delta^Td\eta\equiv\delta d\eta$ up to lower order terms involving $d\eta$, indeed one can check that $\delta^Td\eta=\delta d\eta-\frac{1}{2}g(d\eta,d\eta) \eta$.
Thus, this is all consistent with the fact that the flow is parabolic (up to gauge-fixing). 

Next we consider the normality condition. Since $\xi_0$ will be fixed from now on, we shall simply denote it by $\xi$. Let us further assume that the initial almost contact metric structure is normal. Recall that this is equivalent to saying that  $M\times S^1$ admits a complex structure defined by \eqref{eqn:cpxstructure}. Since this complex structure is invariant by $\langle\xi,\partial_s=J(\xi)\rangle$, we can consider a sufficiently small open set $U\subset M$ such that the local quotient $U/\langle\xi\rangle$ is a complex manifold with Hermitian metric $g^T$. We  can pullback the holomorphic coordinates on the local quotient so that we have local coordinates $(s,z_1,...,z_n)$ on $M$ in which $\xi=\partial_s$ and
\begin{gather*}
    \eta = ds-\sqrt{-1}h_jdz_j+\sqrt{-1}h_{\bar{j}}dz_{\bar{j}}\\
    g=\eta \otimes \eta + g^T_{i\bar{j}}dz_i\odot d\bar{z}_{\bar{j}}\\
    F= -\sqrt{-1}g^T_{i\bar{j}}dz_i\w d\bar{z}_{\bar{j}}\\
    \varphi=\sqrt{-1}(\partial_{z_j}+\sqrt{-1}h_j\partial_s)\otimes dz_j-\sqrt{-1}(\partial_{\bar{z}_j}+\sqrt{-1}h_{\bar{j}}\partial_s)\otimes d\bar{z}_j
\end{gather*}
where $h,g^T_{ij}$ are local basic functions and $h_j:=\partial_{z_j}h$, see Proposition \ref{prop: local model} and \cite{Pawel2000}. Alternatively, if one wants to work globally then one needs to view this as a complex structure on the quotient bundle $TM/\langle \xi \rangle$, see \cite{Smoczyk2010}.

As before we write $c:=\Lambda_Fd\eta=\tfrac 12 g(d\eta, F)$. We begin by deriving the next curvature identity needed to define the flow equations. 
\begin{lem} \label{lem: 1,1}
Let $(\phi,\eta,\xi,g)$ be a Sasaki with torsion structure. Then on horizontal vector fields
\[
\begin{aligned}
&\mathrm{Ric}(g^T)-\frac14(d^\phi F)^2 -d\eta^2
+\frac12\mathcal L_{\theta^\sharp}g^T
+\frac14\lambda(\phi\cdot,\cdot)=
\left((\rho^B)^{1,1}-c\,d\eta\right)(\phi\cdot,\cdot),
\end{aligned}
\]
where for any differential form $A$ we define the symmetric $2$-tensor
$
A^2(X,Y):=g(\iota_XA,\iota_YA),$
$
\lambda:=-\frac12\Lambda_FdH
\in\Omega^{1,1}_\phi(\mathcal F_\xi^\perp),
$ and $\theta$, as usual, is the transverse Lee form.
\end{lem}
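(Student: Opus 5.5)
The plan is to reduce the statement to a purely transverse Hermitian identity. Then I will use the known formula for the $(1,1)$-part of the Bismut Ricci form of a Hermitian metric in terms of the Riemannian Ricci tensor. Since every term is tensorial and $\xi$-invariant, I will work locally on a small open set $U$ with local quotient $N=U/\langle\xi\rangle$, a Hermitian manifold $(N,\phi,g^T,F)$ as in Proposition~\ref{prop: local model}.

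The first step is to recall the Hermitian identity, valid on any Hermitian manifold $(N,J,g,\omega)$ with Bismut torsion $H^N=d^c\omega$ and Lee form $\theta$:
\[
\mathrm{Ric}(g)-\tfrac14 (H^N)^2+\tfrac12\mathcal L_{\theta^\sharp}g+\tfrac14 \lambda^N(J\cdot,\cdot)=(\rho^B)^{1,1}(J\cdot,\cdot),\qquad \lambda^N=-\tfrac12\Lambda_\omega dH^N .
\]
This is the form of the identity used in the pluriclosed-flow literature (Streets--Tian, Garcia-Fernandez--Streets). One derives it from $\mathrm{Ric}^B=\mathrm{Ric}-\frac14 (H^N)^2-\frac12\delta H^N$, the relation between $\rho^B$ and $\mathrm{Ric}^B$, and the Bismut Bianchi-type identity expressing the $J$-anti-invariant and $J$-invariant parts of $\delta H^N$ through $\mathcal L_{\theta^\sharp}g$ and $\Lambda dH^N$. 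I would quote it, checking signs and normalizations against the paper's conventions $(\phi\alpha=(-1)^k\alpha(\phi\cdot,\dots))$.

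The second step applies this to $g^T$, whose Bismut torsion is $d^\phi F$ (Remark~\ref{rmk:rmk2}). This gives the identity with $\lambda^N=-\frac12\Lambda_F dd^\phi F$ and without the $d\eta$ terms. Then I compare with the claimed statement. We have $dH=d\eta\wedge d\eta+dd^\phi F$, so
\[
\lambda=\lambda^N-\tfrac12\Lambda_F(d\eta\wedge d\eta).
\]
For a real $(1,1)$-form $\alpha$ one has the standard identity $\Lambda_F(\alpha\wedge\alpha)=2(\Lambda_F\alpha)\,\alpha-2\,(\alpha\circ\alpha)$, where $\alpha\circ\alpha$ is the $(1,1)$-form associated with $\alpha^2$ via $\phi$. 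Applying it with $\alpha=d\eta$, of type $(1,1)$ by \eqref{eqn:dn11}, and using $c=\Lambda_F d\eta$, I get
\[
\tfrac14\lambda(\phi\cdot,\cdot)=\tfrac14\lambda^N(\phi\cdot,\cdot)-\tfrac14\cdot 2c\,d\eta(\phi\cdot,\cdot)+\tfrac14\cdot 2\,(d\eta)^2 .
\]
The factor will need adjusting depending on whether $d\eta^2$ means $\sum_i \iota_{e_i}d\eta\otimes\iota_{e_i}d\eta$ or half of that. Substituting into the Hermitian identity should reproduce exactly the extra terms $-d\eta^2$ and $-c\,d\eta(\phi\cdot,\cdot)$ in the statement.

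The main obstacle is the normalization bookkeeping. This includes the coefficients in the identity $\Lambda(\alpha\wedge\alpha)$, the factor relating $(d\eta)^2$ to $\alpha\circ\alpha$, and the sign convention for $A(\phi\cdot,\cdot)$ versus $A(\cdot,\phi\cdot)$. The coefficient of $d\eta^2$ in the final formula is $1$, while my naive count above gives $\tfrac12$, so I expect a factor of $2$ to come from the convention $\Lambda_F\alpha=\tfrac12 g(\alpha,F)$. To pin all constants I would test the identity on the standard Sasaki structure on $S^3$ or on $\mathrm{SU}(2)\times\C$ from Theorem~\ref{thm:5}, where $\rho^B$, $c$ and $d\eta$ are explicit, and adjust the coefficients accordingly. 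If quoting the Hermitian identity proves awkward, the fallback is to compute $\mathrm{Ric}(g^T)$ from Lemma~\ref{lem:curvature} directly, tracing $R^\nabla=R^B+d\eta\otimes d\eta$ and using \eqref{eqn:1bianchi}.
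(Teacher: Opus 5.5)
Your argument works, but it takes a different route from the paper. The paper never passes to the local quotient. It quotes the odd-dimensional identity of Friedrich--Ivanov \cite[Proposition~9.1]{FI} for $\nabla$ itself, $\tfrac12\bigl(\rho^\nabla(\phi x,y)-\rho^\nabla(x,\phi y)\bigr)=\Ric^\nabla_{\mathrm{Sym}}(x,y)+\tfrac12\mathcal L_{\theta^\sharp}g(x,y)+\tfrac14\lambda(\phi x,y)$. The two $d\eta$-terms then come from two separate sources. The term $-c\,d\eta$ comes from Proposition~\ref{prop:rhoB}, since the horizontal part of $\rho^\nabla$ is $\rho^B-c\,d\eta$. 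The term $-d\eta^2$ comes from $\Ric(g)-\tfrac14H^2$ on horizontal vectors, via Lemma~\ref{lemma: ricci with killing reeb} and $H^2(x,y)=2d\eta^2(x,y)+(d^\phi F)^2(x,y)$. In your argument both terms come instead from $\lambda$, i.e.\ from the $d\eta\wedge d\eta$ part of $dH$, via $\Lambda_F(\alpha\wedge\alpha)(\phi\cdot,\cdot)=2(\Lambda_F\alpha)\,\alpha(\phi\cdot,\cdot)-2\alpha^2$ for real $(1,1)$-forms. You need neither Proposition~\ref{prop:rhoB} nor O'Neill, only the even-dimensional identity, which is the case $d\eta=0$ of the lemma. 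The paper's route is shorter once FI is granted. Yours shows the lemma is the transverse Hermitian identity plus linear algebra, and it gives you a check on the constants.

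Run that check carefully, because your bookkeeping is off.

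\textbf{The slip.} With your own normalization, $-\tfrac12\Lambda_F(d\eta\wedge d\eta)=-c\,d\eta+d\eta\circ d\eta$, so the coefficients are $\tfrac14$, not $\tfrac14\cdot2$.

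\textbf{The real issue: the normalization of $\lambda$.} The problem is not $\Lambda_F$ on $2$-forms. The pluriclosed-flow references you cite give the Hermitian identity only for $dH=0$, so the $dH$-constant is exactly what you must supply. Write the $\lambda$-term in the Hermitian identity and in the lemma alike as $\mu(\Lambda_FdH)(\phi\cdot,\cdot)$, with one universal $\mu$. Your substitution turns the left side of the lemma into $(\rho^B)^{1,1}(\phi\cdot,\cdot)-(1+2\mu)\,d\eta^2+2\mu\,c\,d\eta(\phi\cdot,\cdot)$. Both $d\eta$-terms match if and only if $\mu=-\tfrac12$.

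\textbf{How to fix $\mu$.} Calibrate the universal $\mu$ on an explicit Hermitian metric, rather than adjusting the lemma's coefficients. For $g=e^{2u}g_E$ on a domain in $\C^2$ one has $\rho^B=0$ and $\theta=2du$. One computes $\Ric-\tfrac14H^2+\tfrac12\mathcal L_{\theta^\sharp}g=-(\Delta_Eu+2|du|_E^2)g_E$ and $(\Lambda_FdH)(J\cdot,\cdot)=-2(\Delta_Eu+2|du|_E^2)g_E$, where $\Delta_E=\sum_i\partial_i^2$. This forces $\mu=-\tfrac12$, the value the lemma needs.

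\textbf{Consequence for the statement.} The outcome is $\lambda=-2\Lambda_FdH$, i.e.\ $\lambda(X,Y)=\sum_i dH(X,Y,e_i,\phi e_i)$, with $\Lambda_F$ normalized by $c=\Lambda_Fd\eta$. The Heisenberg-type structure $d\eta=a\,dx_1\wedge dy_1+b\,dx_2\wedge dy_2$ over flat $\C^2$ confirms this independently. There the lemma reads $\tfrac14\lambda(\phi\cdot,\cdot)=-ab\,g^T$, while $(\Lambda_FdH)(\phi\cdot,\cdot)=2ab\,g^T$. Read literally, the $-\tfrac12$ in the statement is off by a factor of $4$. This does not affect the strong case, where $\lambda=0$.
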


\begin{proof}
By \cite[Proposition 9.1]{FI}, for any horizontal vector fields $x,y$, one has
\[
\begin{aligned}
\frac12\left(
\rho^\nabla(\phi x,y)-\rho^\nabla(x,\phi y)
\right)=
\mathrm{Ric}^\nabla_{\mathrm{Sym}}(x,y)
+\frac12(\mathcal L_{\theta^\sharp}g)(x,y)
+\frac14\lambda(\phi x,y).
\end{aligned}
\]
By Proposition~\ref{prop:rhoB},
\[
\frac12\left(
\rho^\nabla(\phi x,y)-\rho^\nabla(x,\phi y)
\right)
=
\left((\rho^B)^{1,1}-c\,d\eta\right)(\phi x,y).
\]
Moreover,
\[
\mathrm{Ric}^\nabla_{\mathrm{Sym}}
=
\mathrm{Ric}(g)-\frac14H^2.
\]
The conclusion follows from 
Lemma~\ref{lemma: ricci with killing reeb}.
\end{proof}

\subsection{The strong Sasaki with torsion flow}
We now consider the  evolution equations:
\begin{align}
\frac{\partial}{
\partial t
}\eta
&=
-d^{\phi_0} c,\label{eqn:flow1i}\\
\frac{\partial}{
\partial t
}g^T
&=
-2\mathrm{Ric}(g^T)
+2d\eta^2+\frac12(d^\phi F)^2 
-\mathcal L_{\theta^\sharp}g^T
.\label{eqn:flow1ii}
\end{align}
Observe that \eqref{eqn:flow1ii} is of the form \eqref{equ: evol ricT} up to a gauge fixing term. We next explain \eqref{eqn:flow1i}. 
Recall from the above discussion that the quotient bundle $TM/\langle \xi\rangle$ admits a natural complex structure induced by $\varphi_0$, which we shall also denote by $J$ in view of \eqref{eqn:cpxstructure}. We consider variations of $\varphi$ which induce the fixed complex structure $J$ on the quotient bundle i.e.
we only vary $\varphi$ in the vertical direction as in Proposition \ref{prop: normality variation}. Thus, in order to guarantee that the normality condition is preserved under such variations, we need to ensure that $d\eta$ is of type $(1,1)$ along the flow. Equivalently, viewing $M\times S^1$ as a complex manifold, we vary the complex structure on the leaves of foliation generated by $\langle\xi,\partial_s\rangle$
on $M\times S^1$ - this complex structure is determined by $d\eta$ as in \cite{Goldstein2004}. In the above local model, this means that the local coordinates $z_i$ stay $J$-holomorphic. This is similar to the situation considered in \cite{Smoczyk2010}, however, unlike in the Sasaki Ricci flow case (whereby $d\eta=2F$), we also need to prescribe the flow for $\eta$, hence \eqref{eqn:flow1i}. 

Setting
\[
\Phi(g^T)
:=
2\mathrm{Ric}(g^T)-\frac12(d^\phi F)^2 -2 d\eta^2
+\mathcal L_{\theta^\sharp}g^T,
\]
from Lemma~\ref{lem: 1,1}, we see that any Sasaki with torsion structure, the latter is equivalent to
\[
\Phi(g^T)
=
2\left((\rho^B)^{1,1}-c\,d\eta\right)(\phi\cdot,\cdot)
-\frac12\lambda(\phi\cdot,\cdot).
\]
Since the tensors $(\rho^B)^{1,1}$, $d\eta$, and $\lambda$ are all of type $(1,1)$, it follows that $\Phi(g^T)$ is symmetric and $\phi$-invariant. Therefore,
\[
\frac{\partial}{\partial t}g^T=-\Phi(g^T)
\]
is tangent to the space of transverse Hermitian metrics. As $\phi_t$ is reconstructed as the horizontal lift of $J$, the defining algebraic identities of an almost contact metric structure are preserved. 

Since by the normalization discussed above $\xi$ remains a unit Killing vector field for $g(t):=\eta(t) +g^T(t)$ and the complex structure on the quotient is fixed, it follows that $c$ is a basic function. Thus, to prove that the flow \eqref{eqn:flow1i}-\eqref{eqn:flow1ii} is indeed parabolic we need to show that $d^{\varphi_0} c$ is equivalent to $\delta d \eta$ to highest order in agreement with \eqref{equ: evol eta 1}. This can be deduced using the identities in \cite[Theorem 1.1]{Demailly}, but we shall give a more direct proof below.

Since $d\eta$ is a transverse real $(1,1)$-form, we have
\[
*_Td\eta
=
-d\eta\wedge\frac{F^{n-2}}{(n-2)!}
+
(\Lambda_Fd\eta)\frac{F^{n-1}}{(n-1)!}.
\]
Hence, using $c=\Lambda_Fd\eta$, we compute
\[
\begin{aligned}
\delta^Td\eta
&=-*_Td*_Td\eta\\
&=
*_T\left(
d\eta\wedge dF\wedge
\frac{F^{n-3}}{(n-3)!}
\right)
-
*_T\left(
dc\wedge
\frac{F^{n-1}}{(n-1)!}
\right)\\
&\qquad
-
c\,
*_T\left(
dF\wedge
\frac{F^{n-2}}{(n-2)!}
\right).
\end{aligned}
\]
where we used that $F$ is basic since $\xi$ is Killing and $J$ is fixed. With our conventions,
\[
*_T\left(
\beta\wedge
\frac{F^{n-1}}{(n-1)!}
\right)
=
-\phi\beta ,
\]
for any transverse $1$-form $\beta$. Thus,
\[
-*_T\left(
dc\wedge
\frac{F^{n-1}}{(n-1)!}
\right)
=
d^\phi c ,
\]
where $
d^\phi c:=\phi(dc).$ 
Since 
\[
dF\wedge
\frac{F^{n-2}}{(n-2)!}
=
\theta\wedge
\frac{F^{n-1}}{(n-1)!},
\]
we obtain
\[
*_T\left(
dF\wedge
\frac{F^{n-2}}{(n-2)!}
\right)
=
-\phi\theta.
\]
and it follows that
\[
\begin{aligned}
\delta^Td\eta
=
d^\phi c
+
*_T\left(
d\eta\wedge dF\wedge
\frac{F^{n-3}}{(n-3)!}
\right)
+
c\,\phi\theta .
\end{aligned}
\]
Equivalently, writing
\[
d\eta=(d\eta)_0+\frac{c}{n}F,
\]
where $(d\eta)_0$ denotes the primitive part of $d\eta$, we obtain
\begin{equation}\label{eq:codifferential-deta}
\delta^Td\eta
=
d^\phi c
+
*_T\left(
(d\eta)_0\wedge dF\wedge
\frac{F^{n-3}}{(n-3)!}
\right)
+
\frac{2}{n}\,c\,\phi\theta .
\end{equation}

Note that since $c$ is basic and $J$ is fixed, $d^\phi c= d^{\phi_0} c$ and hence
\[
d^{\phi_0} c
=d^\phi c=
\delta^Td\eta+\text{lower-order terms}.
\]
Thus, the system \eqref{eqn:flow1i}-\eqref{eqn:flow1ii} can be expressed as
\[
\begin{split}
\partial_t\eta
&=
-\delta^Td\eta+\text{lower-order terms},\\
\partial_tg^T
&=
-2\mathrm{Ric}(g^T)
-\mathcal L_{\theta^\sharp}g^T
+\text{lower-order terms},
\end{split}
\]
coinciding with \eqref{equ: evol eta 1}-\eqref{equ: evol ricT} up to the gauge transformation.
Thus, one can apply the same strategy as with the Ricci flow by combining the DeTurck vector field with $\theta^\sharp$, see also \cite{fadelflows2}. The resulting gauge-fixed system is parabolic and the usual DeTurck trick pullback argument gives a unique
solution for short time. Alternatively, one can also apply the existence results developed in \cite{Bedulli2018} for transverse parabolic flows. We can summarize the above discussion into:  
\begin{thm}\label{thm:ST-flow}
Given smooth initial data of a Sasaki with torsion structure  $(\phi_0,\eta_0,\xi,g_0)$ on a compact manifold $M$. The evolution equations \eqref{eqn:flow1i}-\eqref{eqn:flow1ii} admit a unique solution $(\phi_t,\eta_t,\xi,g_t)$ for $t\in[0,\epsilon)$ for some $\epsilon>0$ 
which is also a Sasaki with torsion structure.
\end{thm}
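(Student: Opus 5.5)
The proof reduces the system \eqref{eqn:flow1i}--\eqref{eqn:flow1ii} to a transversely parabolic system for basic data on the fixed transverse holomorphic foliation, and then reconstructs the structure. The unknowns are the basic $1$-form deformation of $\eta$ and the basic transverse Hermitian metric $g^T$. Since $\xi$ is fixed and the transverse complex structure $J$ on $TM/\langle\xi\rangle$ is fixed, both right-hand sides are basic, provided the data are basic. For \eqref{eqn:flow1i} this uses that $c$ is basic; for \eqref{eqn:flow1ii} it uses that $\mathrm{Ric}(g^T)$, $d\eta^2$, $(d^\phi F)^2$ and $\theta$ are built from basic quantities. Hence the flow stays within $\xi$-invariant data, and in the local coordinates $(s,z_1,\dots,z_n)$ of the previous discussion it becomes a PDE system on the local leaf spaces. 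Moreover $\partial_t\eta=-d^{\phi_0}c$ is of the form $d^{\phi_0}$ of a basic function. By Proposition~\ref{prop: normality variation}, applied infinitesimally (or by integrating in $t$, writing $\eta_t=\eta_0+d^{\phi_0}f_t$ with $f_t=-\int_0^t c$), the structure with $\phi_t=\phi_0-\xi\otimes df_t$ is again normal. The metric $g_t=\eta_t^2+g_t^T$ is compatible by construction, and $\xi$ is unit Killing because all data are basic. So any solution is automatically Sasaki with torsion, by Theorem~\ref{thm:nabla}.

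For existence, write $\eta_t=\eta_0+d^{\phi_0}f_t$. The system becomes a coupled system for $(f_t,g^T_t)$ with $\partial_t f=-c=-\Lambda_{F}(d\eta_0+dd^{\phi_0}f)$. This is a transverse Laplacian on $f$ to leading order, hence transversely strictly parabolic, in the same way that the Sasaki Ricci flow potential equation is. The metric equation is Ricci flow to top order plus $-\mathcal L_{\theta^\sharp}g^T$. The computation \eqref{eq:codifferential-deta} confirms that the $\eta$-equation agrees with $-\delta^Td\eta$ modulo lower order, matching \eqref{equ: evol eta 1}--\eqref{equ: evol ricT}. I would then apply the DeTurck trick transversely: modify by $\mathcal L_{W}$ with $W=\theta^\sharp$ plus the DeTurck vector field relative to a fixed basic background metric, chosen as a transverse (basic) vector field. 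The gauge-fixed system is then strictly transversely parabolic. Short-time existence follows from the theory of transversely parabolic equations on compact foliated manifolds in \cite{Bedulli2018}, or from standard parabolic theory applied to $\xi$-invariant sections, using that a solution with $\xi$-invariant data stays $\xi$-invariant by uniqueness. Pulling back by the flow of the gauge vector field, which is a foliated diffeomorphism preserving $\xi$ and the transverse holomorphic structure after composing with the holomorphic-gauge correction, gives a solution of the original system.

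For uniqueness I would use the standard harmonic-map heat flow argument transversely. Given two solutions, solve the transverse harmonic map heat flow to produce gauges converting both into solutions of the gauge-fixed system, and then apply uniqueness for the parabolic system. The \textbf{main obstacle} is making this gauge step compatible with the fixed transverse complex structure $J$. The DeTurck diffeomorphisms need not be transversely holomorphic, so the pulled-back $J$ changes. I would handle this as in the pluriclosed flow literature \cite{StreetsTian}: treat the gauge-fixed system as a flow for the pair (metric, $\eta$) without insisting on holomorphicity, prove existence and uniqueness there, and then recover $J$-invariance a posteriori. The recovery step shows that $\partial_t g^T$ is $\phi$-invariant, which follows from Lemma~\ref{lem: 1,1} via the identity $\Phi(g^T)=2((\rho^B)^{1,1}-c\,d\eta)(\phi\cdot,\cdot)-\tfrac12\lambda(\phi\cdot,\cdot)$ noted above.
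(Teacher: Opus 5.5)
Your overall route is the paper's. You reduce to basic data with the transverse complex structure $J$ fixed, obtain normality from Proposition~\ref{prop: normality variation} via $\eta_t=\eta_0+d^{\phi_0}f_t$, identify the principal part using \eqref{eq:codifferential-deta}, gauge-fix with $\theta^\sharp$ and a DeTurck term, and invoke \cite{Bedulli2018}.

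The gap is at exactly the obstacle you flag, and your proposed fix does not close it. Suppose you solve a system gauge-fixed by a basic but not transversely holomorphic field $W$ and then pull back by its flow $\psi_t$. The resulting family is Hermitian for $\psi_t^*J$, and its contact form is no longer of the form $\eta_0+d^{\phi_0}(\cdot)$. So it solves the flow for a moving transverse complex structure, not \eqref{eqn:flow1i}--\eqref{eqn:flow1ii}. Showing that $\Phi(g^T)$ is $\phi$-invariant (Lemma~\ref{lem: 1,1}) only proves that the ungauged flow is tangent to $J$-Hermitian metrics; it does not undo $\psi_t$. The ``holomorphic-gauge correction'' you mention is never constructed. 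Your harmonic-map-heat-flow uniqueness argument has the same defect.

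The missing observation is that no diffeomorphism gauge is needed: the term $-\mathcal L_{\theta^\sharp}g^T$ already present in \eqref{eqn:flow1ii} is a $J$-compatible DeTurck term.
\begin{itemize}
\item In transverse holomorphic coordinates, a Hermitian $g^T$ satisfies $g^{ab}\Gamma^k_{ab}=2g^{i\bar j}\Gamma^k_{i\bar j}$. Lowering the index gives $\Gamma_{\bar m}=g^{i\bar j}(\partial_{\bar j}g_{i\bar m}-\partial_{\bar m}g_{i\bar j})=-\theta_{\bar m}$. You can check this on $g=e^u\delta$, where $\theta=(n-1)du$. It is tensorial under holomorphic coordinate changes, since $g^{cd}\partial_c\partial_d w=0$ for holomorphic $w$.
\item Inserting this into $-2R_{ab}=g^{cd}\partial_c\partial_dg_{ab}-\partial_a\Gamma_b-\partial_b\Gamma_a+\dots$ gives $-2\mathrm{Ric}(g^T)-\mathcal L_{\theta^\sharp}g^T=(g^T)^{cd}\partial_c\partial_dg^T_{ab}+$ lower order, with $J$ fixed.
\item Lemma~\ref{lem: 1,1} shows the full right-hand side is $\phi$-invariant.
\end{itemize}
Hence the system is already transversely strictly parabolic on transverse Hermitian metrics; this is the same mechanism that makes pluriclosed flow parabolic. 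The transverse parabolic theory of \cite{Bedulli2018} then gives existence and uniqueness directly, with nothing to pull back.

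A smaller correction: in the variables $(f,g^T)$ the metric equation is not Ricci flow to top order, because $2d\eta^2$ is second order in $f$. The system is still parabolic, since its symbol is block-triangular: $c=\Lambda_Fd\eta$ involves $g^T$ only algebraically. Writing $f=\varepsilon\tilde f$ with $\varepsilon$ small makes it strongly parabolic.
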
 
From the definition of $F$ \eqref{def of F}, \eqref{eqn:flow1ii} and  Lemma~\ref{lem: 1,1}, we have
\[
\partial_tF
=
-2\left((\rho^B)^{1,1}-c\,d\eta\right)+\tfrac12 \lambda.
\]
Using the latter together with  \eqref{eqn:flow1i} and the fact that
$dH=dd^\phi F+d\eta\wedge d\eta$, we obtain
\begin{align*}
\partial_t dH &=
dd^\phi(\partial_tF)
+
2d(\partial_t\eta)\wedge d\eta= \tfrac12 dd^\phi \lambda,
\end{align*}
where again we used that $J$ is fixed and $F$ is basic. 
Setting $\Psi:=dH$, the latter can be expressed as
\[
\partial_t\Psi
=
\frac14dd^\phi\Lambda_F\Psi.
\]
Since $\Psi$ is a closed basic form of type $(2,2)$, the identities in
\cite[Theorem~1.1]{Demailly} give

\begin{align*}
dd^\phi\Lambda_F\Psi
&=
d \delta^T \Psi
 + Q*\nabla^T \Psi+\nabla^T Q * \Psi,\\
&= \Delta^T \Psi + Q*\nabla^T \Psi+\nabla^T Q * \Psi,
\end{align*}
where $Q$ correspond to some torsion term and $\nabla^T$ is the transverse Levi-Civita. 
Thus, $\Psi$ satisfies a homogeneous linear
transversely parabolic equation. Since $\Psi_0=0$, uniqueness implies
that $\Psi_t=0$ for as long as the solution exists. Therefore, the strong condition
is preserved along the flow.
\begin{prop}\label{prop:SST-preserved}
Let
$\Phi_t=(\phi_t,\eta_t,\xi,g_t)$
be a solution of \eqref{eqn:flow1i}-\eqref{eqn:flow1ii}. If $\Phi_0$ is strong Sasaki with torsion, then $\Phi_t$ remains strong Sasaki with torsion for as long as the solution exists.
\end{prop}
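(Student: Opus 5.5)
The plan is to show that $\Psi_t:=dH_t$ obeys a linear, homogeneous, transversely parabolic equation along the flow. Since $\Psi_0=0$, uniqueness will then give $\Psi_t\equiv0$.

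\textbf{Step 1: time derivative of $F$.} By Theorem~\ref{thm:ST-flow}, $\xi$ stays a unit Killing field along the flow. The complex structure $J$ on $TM/\langle\xi\rangle$ is fixed, and $F_t$, $d\eta_t$, $c_t$ remain basic. Combining \eqref{eqn:flow1ii} with Lemma~\ref{lem: 1,1}, the right-hand side of the metric equation equals
\[
-2\bigl((\rho^B)^{1,1}-c\,d\eta\bigr)(\phi\cdot,\cdot)+\tfrac12\lambda(\phi\cdot,\cdot).
\]
Since $\phi$ is unchanged on the horizontal quotient, it follows that
\[
\partial_tF=-2\bigl((\rho^B)^{1,1}-c\,d\eta\bigr)+\tfrac12\lambda .
\]

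\textbf{Step 2: evolution of $dH$.} Write $dH=d\eta\wedge d\eta+dd^\phi F$. Because $J$ is fixed and everything is basic, $d^\phi$ is time independent on basic forms, so
\[
\partial_t dH=dd^\phi(\partial_tF)+2\,d(\partial_t\eta)\wedge d\eta .
\]
From \eqref{eqn:flow1i}, $d(\partial_t\eta)=-dd^{\phi}c$. The term $dd^\phi(c\,d\eta)$ equals $dd^\phi c\wedge d\eta$, because $d\eta$ is closed of type $(1,1)$ and hence $d^\phi d\eta=0$. So the $c\,d\eta$ contribution from Step~1 cancels against $2\,d(\partial_t\eta)\wedge d\eta$.

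The term $dd^\phi(\rho^B)^{1,1}$ is where I expect the main obstacle. Transversally, $\rho^B$ is closed and $\rho^B-\rho^{Ch}$ is $dd^c$-exact up to $(2,0)+(0,2)$ parts, so $dd^\phi\rho^B=0$. The delicate point is that the $(2,0)+(0,2)$ part of $\rho^B$ is $\partial$- and $\bar\partial$-closed in the appropriate sense, so that $dd^\phi$ of the $(1,1)$ part also vanishes. I would handle this with the standard pluriclosed-flow identity $(\rho^B)^{1,1}=\rho^{Ch}-dd^c\log$-type terms, applied transversally in the local holomorphic coordinates given by Proposition~\ref{prop: local model}. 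The outcome is
\[
\partial_t\Psi=\tfrac12dd^\phi\lambda=\tfrac14dd^\phi\Lambda_F\Psi .
\]

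\textbf{Step 3: parabolicity and uniqueness.} $\Psi$ is a closed basic real $(2,2)$-form. The Kähler identities with torsion from \cite[Theorem~1.1]{Demailly}, applied transversally, give
\[
dd^\phi\Lambda_F\Psi=d\delta^T\Psi+Q*\nabla^T\Psi+\nabla^TQ*\Psi .
\]
Since $d\Psi=0$, the leading term is $\Delta^T\Psi$. So $\Psi$ solves a linear homogeneous transversely parabolic system on the compact manifold. A transverse energy estimate, or uniqueness for basic parabolic systems as in \cite{Bedulli2018}, then gives $\Psi_t=0$ whenever $\Psi_0=0$.
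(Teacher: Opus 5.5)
Your proposal is the paper's argument step for step: $\partial_tF$ from Lemma~\ref{lem: 1,1}, cancellation of the $c\,d\eta$ contribution against $2\,d(\partial_t\eta)\wedge d\eta$, vanishing of $dd^\phi(\rho^B)^{1,1}$, then the transverse Demailly identities plus uniqueness; the paper uses the vanishing of $dd^\phi(\rho^B)^{1,1}$ without comment, and your reason is enough on its own, with no Chern--Ricci comparison needed, because the $(1,1)$-part of any closed real $2$-form is $\partial\bar\partial$-closed. The one thing to correct is a sign, inherited from the paper's displayed formula: with $\lambda=-\tfrac12\Lambda_FdH$ as defined in Lemma~\ref{lem: 1,1} one gets $\partial_t\Psi=\tfrac12dd^\phi\lambda=-\tfrac14dd^\phi\Lambda_F\Psi=-\tfrac14d\delta^T\Psi+\dots$, and since $d\delta^T$ is nonnegative on closed forms, this minus sign is exactly what makes the equation forward rather than backward parabolic, so your $+\tfrac14$ should be $-\tfrac14$.
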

At this point, we may completely characterize $\nabla$-Einstein structures are static point of the \emph{strong Sasaki with torsion flow}.
\begin{prop}
Let $(M^{2n+1},\phi,\xi,g)$ be a compact strong
Sasaki-with-torsion manifold. Then the structure satisfies 
\begin{equation} \label{eqn:static_point}
d^\phi c=0, \ \ (\rho^B)^{1,1}-c \, d\eta=0
\end{equation}
if and only if it is $\nabla$-Einstein.
\end{prop}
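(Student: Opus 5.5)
The forward implication is formal. For the converse, the plan is to show that $c$ is constant and that the $(2,0)+(0,2)$-part of $\rho^B$ vanishes, and then to invoke the characterization following Proposition~\ref{thm:BHE}.

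\textbf{$\nabla$-Einstein $\Rightarrow$ \eqref{eqn:static_point}.} By Proposition~\ref{thm:BHE} and the discussion after Remark~\ref{rmk:oddeven}, $\nabla$-Einstein means that $c$ is constant, $\rho^B=c\,d\eta$ and $dH=0$. Hence $dc=0$, so $d^\phi c=\phi(dc)=0$. Since $d\eta$ is of type $(1,1)$ by \eqref{eqn:dn11}, taking the $(1,1)$-part of $\rho^B=c\,d\eta$ gives $(\rho^B)^{1,1}=c\,d\eta$.

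\textbf{Step 1: $c$ is constant.} Assume \eqref{eqn:static_point}. The function $c$ is basic, since $\xi$ is Killing and preserves $F$ and $d\eta$ (Remark~\ref{rmk:rmk}). Therefore $dc$ is a horizontal $1$-form. On horizontal forms $\phi$ is invertible, so $d^\phi c=\phi(dc)=0$ forces $dc=0$. As $M$ is connected, $c$ is constant.

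\textbf{Step 2: reduction to $\beta=0$.} Set $\beta:=\rho^B-c\,d\eta$. By \eqref{eqn:static_point}, $\beta=(\rho^B)^{2,0+0,2}$ is a basic real form of type $(2,0)+(0,2)$. By Proposition~\ref{thm:BHE}, $\rho^\nabla=\rho^B-d(c\eta)=\rho^B-c\,d\eta=\beta$ is closed. So it remains to show that a closed $(2,0)+(0,2)$ form of this specific shape vanishes on a compact strong manifold.

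\textbf{Step 3: an exact expression for $\beta$.} I use the transverse version of the standard identity $\rho^B=\rho^{Ch}-dJ\theta$ (up to normalisation), where $\rho^{Ch}$ is the $(1,1)$ transverse Chern--Ricci form. This gives
\[
\beta=-(d\phi\theta)^{2,0+0,2}=-\partial(\phi\theta)^{1,0}-\bar\partial(\phi\theta)^{0,1}.
\]
Then I integrate $|\beta|^2$. For a real $(2,0)+(0,2)$ form, $|\beta|^2\,\eta\wedge\frac{F^n}{n!}$ equals, up to a universal positive constant and sign, $\eta\wedge\beta\wedge\beta\wedge F^{n-2}$. Substituting $\beta=-d(\phi\theta)+(d\phi\theta)^{1,1}$ in one factor, the $(1,1)$-piece drops out by type. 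Integrating by parts, Stokes yields terms of the form $\phi\theta\wedge d\eta\wedge\beta\wedge F^{n-2}$, which vanish by type since $d\eta$ is $(1,1)$, and terms of the form $\phi\theta\wedge\beta\wedge dF\wedge F^{n-3}$.

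\textbf{Main obstacle: the $dF$ terms.} Controlling the terms containing $dF\wedge F^{n-3}$ is the hardest step. My first attempt will use the strong condition $dd^\phi F=-d\eta\wedge d\eta$, together with the closedness of $\beta$ and the freedom to replace one factor $\beta$ by the conjugate-type expression $-\bar\partial(\phi\theta)^{0,1}$. The aim is to show these boundary terms reassemble into $\int\eta\wedge\partial\bar\partial(\cdots)$, which is exact, or into terms killed by type. This mirrors the Hermitian argument that for compact pluriclosed metrics $(\rho^B)^{2,0}=0$ on static points of the pluriclosed flow.

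\textbf{Fallback.} If the integral identity does not close, I would instead use the compactness result of \cite{KS} and \cite{GFStr}, as recalled before Proposition~\ref{prop:Vneq0}. On a compact strong structure with $c$ constant, $(\rho^B)^{1,1}=c\,d\eta$ combined with Lemma~\ref{lem: 1,1} gives, after checking that $\lambda=0$ (which follows from $dH=0$), a gradient generalized Ricci soliton equation with soliton field $\theta^\sharp$. The soliton structure then forces $\rho^\nabla$ to vanish.

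Once $\beta=0$, we have $\rho^B=c\,d\eta$ with $c$ constant and $dH=0$, which is exactly the $\nabla$-Einstein condition.
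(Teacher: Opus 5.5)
Your forward direction and Steps~1--2 are correct, and for $n=2$ your Step~3 actually closes. There $F^{n-2}=1$, so integrating $\eta\wedge d(\phi\theta)\wedge\beta$ by parts produces only two further terms. The first is $d\eta\wedge\phi\theta\wedge\beta$, a horizontal $5$-form, which vanishes for degree reasons. The second is $\eta\wedge\phi\theta\wedge d\beta$, which vanishes because $\beta=\rho^\nabla$ is closed. Hence $\int_M|\beta|^2=0$, and you get a self-contained proof in dimension~$5$ that does not even use $dH=0$.

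For $n\ge 3$, however, the proof stops exactly at the decisive point. You are left with
\[
(n-2)\int_M\eta\wedge\phi\theta\wedge\beta\wedge dF\wedge F^{n-3}.
\]
Type counting only disposes of the Lee part of $dF$, since $\phi\theta\wedge\theta$ is of type $(1,1)$. The primitive part $(dF)_0$ contributes terms such as $\eta\wedge(\phi\theta)^{1,0}\wedge\beta^{0,2}\wedge(dF)_0^{2,1}\wedge F^{n-3}$. These have the correct type $(n,n)$, and nothing in the proposal controls them. The strong condition has to enter precisely here, and you only express the hope that it will.

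The fallback does not rescue this, because it is circular. Write $(\nabla\theta)(X,Z)=(\nabla_X\theta)(Z)$. The Friedrich--Ivanov identity $\mathrm{Ric}^\nabla(X,\phi Y)=\rho^\nabla(X,Y)+(\nabla_X(c\eta+\phi\theta))(Y)$ shows that, once $c$ is constant and $(\rho^B)^{1,1}=c\,d\eta$, one has $\mathrm{Ric}^\nabla+\nabla\theta=-\beta(\cdot,\phi\cdot)$, which is a skew tensor. Lemma~\ref{lem: 1,1} with $\lambda=0$ only recovers the vanishing of its symmetric part, $\mathrm{Ric}(g)-\frac14H^2+\frac12\mathcal L_{\theta^\sharp}g=0$; this is just your hypothesis rewritten. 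The skew part, $-\frac12(\delta H+\iota_{\theta^\sharp}H-d\theta)$, is exactly $-\beta(\cdot,\phi\cdot)$. So no generalized Ricci soliton equation is available before you already know $\beta=0$, let alone a gradient one, since $\theta^\sharp$ is not a gradient field. Consequently \cite{GFStr} cannot be applied.

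The missing idea is the reduction used in the paper. Since $dH=0$, the product $(M\times S^1,\widehat J,\widehat\omega=d\tau\wedge\eta+F)$ is a compact pluriclosed Hermitian manifold. Its Bismut connection is $\nabla^{\mathrm{LC}}_{S^1}\oplus\nabla$, so its Bismut Ricci form is $\rho^\nabla$ (Remark~\ref{rmk:oddeven}). For constant $c$, the $(1,1)$-part of this form is $(\rho^B)^{1,1}-c\,d\eta=0$. Ye's theorem \cite{Ye} states that on a compact pluriclosed manifold, $(\rho^B)^{1,1}=0$ forces $\rho^B=0$; this gives $\rho^\nabla=0$. Your Step~3 is in effect an attempt to reprove that theorem transversally, and as written it succeeds only for $n=2$.
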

\begin{proof}
The converse implication is clear. Let us prove the direct implication. Assume that $(\phi,\xi,g)$ satisfies $dH=0$ and equation \eqref{eqn:static_point}.
Then, $c$ is constant, and $(\rho^B)^{1,1}=c \, d\eta$. By Remark \ref{rmk:oddeven}, the induced Hermitian structure on the product with $S^1$, satisfies $dH=0$ and the Bismut Ricci form $\tilde \rho^B$ coincides with $\rho^\nabla$. However, 
\[
(\tilde \rho^B)^{(1,1)}= (\rho^B)^{(1,1)}-c d\eta=0.
\]
Therefore, by \cite{Ye}, $\tilde \rho^B=\rho^\nabla$ vanishes identically.
\end{proof}
We now prove that, when the initial condition of the flow \eqref{eqn:flow1i}-\eqref{eqn:flow1ii} is a strong Sasaki-with-torsion structure, the resulting evolution is precisely the generalized Ricci flow \cite{GFStr}, up to gauge equivalence. 
\begin{prop} \label{prop:stationary_sst}
Let $(\phi_t,\eta_t,\xi,g_t)$ be a solution of
\eqref{eqn:flow1i}--\eqref{eqn:flow1ii}.
If the initial structure is strong, then the induced pair $(g_t,H_t)$,
where
\[
g_t=\eta_t^2+g_t^T,
\qquad
H_t=\eta_t\wedge d\eta_t+d^{\phi_t}F_t,
\]
satisfies the gauge-fixed generalized Ricci flow
\[
\partial_tg
=
-2 \Ric(g)+\frac12H^2-\mathcal L_{\theta^\sharp}g,
\qquad
\partial_tH
=
\Delta H-\mathcal L_{\theta^\sharp}H.
\]

\end{prop}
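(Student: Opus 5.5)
The plan is to verify the two equations separately, computing both sides in the splitting $TM=\mathcal F_\xi\oplus\ker\eta$. Throughout, $\xi$ is fixed and unit Killing, so $g_t=\eta_t^2+g^T_t$. By Proposition~\ref{prop:SST-preserved} we may assume $dH_t=0$ for all $t$.

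\textbf{Metric equation.} The time derivative of the metric is
\[
\partial_t g=\eta\odot\partial_t\eta+\partial_t g^T .
\]
To evaluate it I will take three ingredients.
\begin{enumerate}
\item Lemma~\ref{lemma: ricci with killing reeb} gives $\Ric(g)$ in components.
\item $H^2=(\eta\wedge d\eta+d^\phi F)^2$ has components $H^2(\xi,\xi)=|d\eta|^2$, $H^2(\xi,x)$ given by contracting $d\eta$ against $d^\phi F$, and horizontal part $2\,d\eta^2+(d^\phi F)^2$ (up to the paper's normalization of $d\eta^2$, which I would fix by checking against Lemma~\ref{lem: 1,1}).
\item $\mathcal L_{\theta^\sharp}g$, where $\theta^\sharp$ is horizontal and basic. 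Hence $\mathcal L_{\theta^\sharp}\eta=\iota_{\theta^\sharp}d\eta$, and the mixed part is $\eta\odot\iota_{\theta^\sharp}d\eta$.
\end{enumerate}
With these in hand I check each block.

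The $\eta\otimes\eta$ component of the right-hand side is $-\tfrac12|d\eta|^2+\tfrac12|d\eta|^2=0$, consistent with $|\xi|=1$.

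The horizontal block is exactly \eqref{eqn:flow1ii}, after the O'Neill term $-\tfrac12\sum\iota_{e_i}d\eta\otimes\iota_{e_i}d\eta$ is combined with $\tfrac12 H^2$.

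The mixed block requires
\[
-d^{\phi_0}c=-\delta^Td\eta+\tfrac12\,\iota_\xi H^2|_{\ker\eta}-\iota_{\theta^\sharp}d\eta .
\]
For this I will use formula \eqref{eq:codifferential-deta}, $\delta^Td\eta=d^\phi c+\ast_T\big(d\eta\wedge dF\wedge F^{n-3}/(n-3)!\big)+c\,\phi\theta$. It then remains to show that the torsion terms $\ast_T(d\eta\wedge dF\wedge\cdots)+c\phi\theta$ equal $\tfrac12\langle d\eta,d^\phi F\rangle$ minus $\iota_{\theta^\sharp}d\eta$. This is a pointwise linear-algebra identity for a $(1,1)$-form $d\eta$ against the decomposition $dF=(dF)_0+\theta\wedge F$; it holds without the strong condition.

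\textbf{Torsion equation.} Rather than differentiate $H$ directly, I will use the Hermitian picture on $M\times S^1$ (Remark~\ref{rmk:oddeven}), where $H=d^c\omega$ with $\omega=ds\wedge\eta+F$. We have
\[
\partial_tH=\partial_t\eta\wedge d\eta+\eta\wedge d\partial_t\eta+d^\phi\partial_tF ,
\]
with $\partial_tF=-2\big((\rho^B)^{1,1}-c\,d\eta\big)$, since $\lambda=0$ on the strong locus. Since $dH=0$ and $H$ is invariant under $\xi$, we get $\Delta H-\mathcal L_{\theta^\sharp}H=d(\delta H-\iota_{\theta^\sharp}H)$. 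So the claim reduces to showing that
\[
\partial_tH=d\big(\delta H-\iota_{\theta^\sharp}H\big).
\]
I would prove this by writing $\partial_tH$ as $d$ of a $2$-form. Using $\partial_t\eta=-d^\phi c$, we have
\[
\partial_t(\eta\wedge d\eta)=d\big(-d^\phi c\wedge\eta\big)+\cdots .
\]
Using $d^\phi\rho^B$-type identities, we have $d^\phi\partial_tF=d(\ldots)$ via $\rho^B=-dd^c\log$-type expressions. Then I would identify the resulting $2$-form with $\delta H-\iota_{\theta^\sharp}H$, using the known formula $\rho^B=-\tfrac12(\delta H-\iota_{\theta^\sharp}H)\circ\phi$-type relation from \cite{FI}, i.e. the skew part of $\Ric^\nabla$ being $-\tfrac12\delta H$.

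The cleanest route is to combine the identity $\tfrac12(\rho^\nabla(\phi\cdot,\cdot)-\rho^\nabla(\cdot,\phi\cdot))$ used in Lemma~\ref{lem: 1,1} with its skew counterpart from \cite[Proposition 9.1]{FI}. This gives $\rho^\nabla=\rho^B-d(c\eta)$, expressed through $\delta H$ and $\iota_{\theta^\sharp}H$, and taking $d^\phi$ and $d$ produces the desired equality.

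\textbf{Main obstacle.} I expect the hardest step to be this last identification, namely matching $\partial_tH$ with $d(\delta H-\iota_{\theta^\sharp}H)$, including the $(2,0)+(0,2)$ parts of $\rho^B$ and the sign and normalization conventions. If it resists, the fallback is to apply the known Hermitian statement that pluriclosed flow is gauge-equivalent to generalized Ricci flow on $M\times S^1$, after checking the $\chi_t$ gauge along $\partial_s$, and then restrict to $M$.
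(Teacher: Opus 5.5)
Your treatment of the metric equation is correct, but it takes a different route from the paper. The paper gets the mixed $(x,\xi)$-block from Proposition~9.1 of \cite{FI}. You instead use the O'Neill formula for $\Ric(g)$ and \eqref{eq:codifferential-deta} to reduce it to the pointwise identity
\[
\delta^Td\eta=d^\phi c+\tfrac12H^2(\xi,\cdot)-\iota_{\theta^\sharp}d\eta .
\]
This identity does hold. One normalization fix: with the paper's Lee form, $dF=(dF)_0+\tfrac{1}{n-1}\theta\wedge F$, not $(dF)_0+\theta\wedge F$. Here is a clean way to see the identity. Set $\delta^B=-\sum_i\iota_{e_i}\nabla^B_{e_i}$. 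Then the term $\tfrac12H^2(\xi,\cdot)=\tfrac12\sum_{j,k}d\eta(e_j,e_k)\,d^\phi F(\cdot,e_j,e_k)$ is exactly $\delta^Td\eta-\delta^Bd\eta$. Moreover $\delta^Bd\eta=d^\phi c-\iota_{\theta^\sharp}d\eta$ follows from $d(d\eta)=0$ and $\nabla^B\phi=0$ by the usual K\"ahler-identity argument. Your route is self-contained, and it shows that the metric equation holds along the flow even without $dH=0$.

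The torsion equation, however, is not established by your plan.

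First, a sign. With $\delta$ the adjoint of $d$ and the paper's $\Delta$, the target is $\partial_tH=-d(\delta H+\iota_{\theta^\sharp}H)$, not $d(\delta H-\iota_{\theta^\sharp}H)$.

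More seriously, no relation identifies $\rho^B$ with $(\delta H\pm\iota_{\theta^\sharp}H)\circ\phi$. Combining the skew part $-\tfrac12\delta H$ of $\Ric^\nabla$, Proposition~9.1 of \cite{FI}, and $\rho^\nabla=\rho^B-d(c\eta)$ gives
\[
\delta H+\iota_{\theta^\sharp}H=d\theta+2\big((\rho^B)^{2,0+0,2}\big)_\phi-\eta\wedge d^\phi c,\qquad \beta_\phi(x,y):=\beta(x,\phi y).
\]
So $\delta H+\iota_{\theta^\sharp}H$ sees only the $(2,0)+(0,2)$-part of $\rho^B$, plus $d\theta$. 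By contrast,
\[
\partial_tH=-2d^\phi(\rho^B)^{1,1}+d^\phi c\wedge d\eta-\eta\wedge dd^\phi c
\]
involves only the $(1,1)$-part. The missing idea is the closedness of the transverse Bismut Ricci form. From $d\rho^B=0$ one gets $d\big((\rho^B)^{2,0+0,2}\big)_\phi=d^\phi(\rho^B)^{1,1}$, and $d(d\theta)=0$ removes the Lee term. This is exactly how the paper concludes.

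Your suggestion to write $d^\phi\partial_tF$ as an exact form via a ``$\rho^B=-dd^c\log$'' formula does not work, because that formula is for the Chern--Ricci form, not the Bismut one.

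Your fallback is viable, since Theorem~\ref{thm:SWT-pluriclosed} is proved independently of this proposition. It also needs the Hermitian pluriclosed-flow/generalized-Ricci-flow gauge equivalence, plus a check you left out. The Lee form of $(\widehat g,\widehat J)$ on $M\times S^1$, with $d\widehat\omega^{n}=\widehat\theta\wedge\widehat\omega^{n}$, is $\widehat\theta=\theta-c\,d\tau$. Hence the $\chi_t$-gauge $c\,\partial_\tau$ exactly cancels the vertical part of $\widehat\theta^\sharp$. Only then does the gauge-fixed generalized Ricci flow for $(d\tau^2+g_t,H_t)$ restrict to $(g_t,H_t)$ on $M$ with gauge field $\theta^\sharp$.
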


\begin{proof}

By Proposition~\ref{prop:SST-preserved}, the strong condition is preserved along the flow. Therefore, $\lambda=\frac12\Lambda_FdH=0$.
Hence, from the evolution equation for $F$, we have
\[
\partial_tF
=
-2\left((\rho^B)^{1,1}-c\,d\eta\right).
\]
Moreover, since $c$ is basic and the transverse complex structure is fixed,
\[
d^{\phi_0}c=d^\phi c.
\]

By Lemma~\ref{lemma: ricci with killing reeb}, the Ricci tensor of $g$ is
\[
\Ric(g)
=
\frac14g(d\eta,d\eta)\eta\otimes\eta
+\frac12\eta\odot\delta^Td\eta
+\Ric(g^T)
-\frac12d\eta^2.
\]
Since
\[
g=\eta\otimes\eta+g^T,
\]
we have
\[
\begin{aligned}
\partial_tg
&=
(\partial_t\eta)\otimes\eta
+\eta\otimes(\partial_t\eta)
+\partial_tg^T\\
&=
-d^\phi c\otimes\eta
-\eta\otimes d^\phi c
-2\Ric(g^T)
+2d\eta^2
+\frac12(d^\phi F)^2
-\mathcal L_{\theta^\sharp}g^T.
\end{aligned}
\]
We want to prove that
\[
\partial_tg
=
-2\Ric(g)+\frac12H^2-\mathcal L_{\theta^\sharp}g.
\]

Let $x,y$ be horizontal vector fields. From the expression of the Ricci tensor, we have
\[
\Ric(g)(x,y)
=
\Ric(g^T)(x,y)-\frac12d\eta^2(x,y).
\]
Moreover, using that $H=\eta\wedge d\eta+d^\phi F$. For horizontal vector fields $x,y$,
\[
H^2(x,y)
=
2d\eta^2(x,y)+(d^\phi F)^2(x,y).
\]
It follows that
\[
\begin{aligned}
\left(
-2\Ric(g)+\frac12H^2-\mathcal L_{\theta^\sharp}g
\right)(x,y)
&=
-2\Ric(g^T)(x,y)
+2d\eta^2(x,y)
+\frac12(d^\phi F)^2(x,y)
-\mathcal L_{\theta^\sharp}g^T(x,y)\\
&
=
\partial_tg(x,y).
\end{aligned}
\]

We next consider the vertical component. Since $d^\phi c$ is horizontal and $g^T(\xi,\cdot)=0$, we have
\[
\partial_tg(\xi,\xi)=0.
\]
On the other hand,
\[
\Ric(g)(\xi,\xi)
=
\frac14g(d\eta,d\eta).
\]
Since
\[
\iota_\xi H=d\eta,
\]
we also have
\[
H^2(\xi,\xi)=g(d\eta,d\eta).
\]
Moreover, since $\theta^\sharp$ is basic,
\[
[\theta^\sharp,\xi]=0,
\]
and hence
\[
\mathcal L_{\theta^\sharp}g(\xi,\xi)=0.
\]
Consequently,
\[
\left(
-2\Ric(g)+\frac12H^2-\mathcal L_{\theta^\sharp}g
\right)(\xi,\xi)
=
0
=
\partial_tg(\xi,\xi).
\]

It remains to consider the mixed components. Let $x$ be horizontal. From the expression of $\partial_tg$, we have
\[
\partial_tg(x,\xi)
=
-d^\phi c(x).
\]
Recall that
\[
\Ric^\nabla_{\mathrm{Sym}}
=
\Ric(g)-\frac14H^2.
\]
Since $dH=0$, Proposition~9.1 in~\cite{FI}, applied to $x$ and $\xi$, gives
\[
\frac12\rho^\nabla(\phi x,\xi)
=
\Ric^\nabla_{\mathrm{Sym}}(x,\xi)
+\frac12\mathcal L_{\theta^\sharp}g(x,\xi).
\]
Therefore,
\[
\begin{aligned}
\left(
-2\Ric(g)+\frac12H^2-\mathcal L_{\theta^\sharp}g
\right)(x,\xi)
&=
-2\Ric^\nabla_{\mathrm{Sym}}(x,\xi)
-\mathcal L_{\theta^\sharp}g(x,\xi)\\
&
=
-\rho^\nabla(\phi x,\xi).
\end{aligned}
\]
By Proposition~\ref{prop:rhoB},
\[
\rho^\nabla
=
\rho^B-d(c\eta).
\]
Since $\rho^B$ is horizontal, $c$ is basic, and $\iota_\xi d\eta=0$, we obtain
\[
\begin{aligned}
\rho^\nabla(\phi x,\xi)=
-d(c\eta)(\phi x,\xi)=
-dc(\phi x)=d^\phi c(x).
\end{aligned}
\]
Thus,
\[
\left(
-2\Ric(g)+\frac12H^2-\mathcal L_{\theta^\sharp}g
\right)(x,\xi)
=
-d^\phi c(x)
=
\partial_tg(x,\xi).
\]
We have proved that
\[
\partial_tg
=
-2\Ric(g)+\frac12H^2-\mathcal L_{\theta^\sharp}g.
\]

We now prove the evolution equation for the torsion. Decompose
\[
\delta H=B+\eta\wedge a,
\]
where $B$ is a horizontal $2$-form and $a$ is a horizontal $1$-form. By definition,
\[
a=\iota_\xi\delta H.
\]

For a metric connection with closed skew-symmetric torsion, one has
\[
\Ric^\nabla(X,Y)-\Ric^\nabla(Y,X)
=
-\delta H(X,Y).
\]
Since $\nabla\xi=0$, we have
\[
\Ric^\nabla(x,\xi)=0
\]
for every horizontal vector field $x$. Consequently,
\[
a(x)
=
\delta H(\xi,x)
=
-\Ric^\nabla(\xi,x).
\]

Proposition~9.1 in~\cite{FI} gives
\[
\Ric^\nabla(X,\phi Y)
=
\rho^\nabla(X,Y)
+
\left(\nabla_X(c\eta+\phi\theta)\right)(Y).
\]
Applying this identity with $X=\xi$ and $Y=\phi x$, we obtain
\[
-\Ric^\nabla(\xi,x)
=
\rho^\nabla(\xi,\phi x)
+
\left(\nabla_\xi(c\eta+\phi\theta)\right)(\phi x).
\]
Using
\[
\rho^\nabla=\rho^B-d(c\eta),
\]
and the fact that $\rho^B$ is horizontal, we find
\[
\begin{aligned}
\rho^\nabla(\xi,\phi x)
=-d(c\eta)(\xi,\phi x)=
dc(\phi x)=
-d^\phi c(x).
\end{aligned}
\]
Moreover, since $c$ is basic and $\nabla\eta=\nabla\phi=0$,
\[
\left(\nabla_\xi(c\eta+\phi\theta)\right)(\phi x)
=
(\nabla_\xi\theta)(x)= \iota_{\theta^\sharp}d\eta(x).
\]

We now determine the horizontal $2$-form $B$. Let $x,y$ be horizontal vector fields. Applying Proposition~9.1 in~\cite{FI} with $X=x$ and $Y=-\phi y$, we obtain
\[
\Ric^\nabla(x,y)
=
-\rho^\nabla(x,\phi y)-(\nabla_x\theta)(y).
\]
Taking the skew-symmetric part and using
\[
\Ric^\nabla(x,y)-\Ric^\nabla(y,x)
=
-\delta H(x,y),
\]
we find
\[
\begin{aligned}
B(x,y)
&=
\delta H(x,y)=
\rho^\nabla(x,\phi y)
-\rho^\nabla(y,\phi x)
+(\nabla_x\theta)(y)
-(\nabla_y\theta)(x).
\end{aligned}
\]
The horizontal component of $\rho^\nabla$ is
\[
\rho^\nabla_{\lvert\ker\eta}
=
\rho^B-c\,d\eta.
\]
Since $(\rho^B)^{1,1}$ and $c\,d\eta$ are of type $(1,1)$,
\[
\rho^\nabla(x,\phi y)-\rho^\nabla(y,\phi x)
=
2(\rho^B)^{2,0+0,2}(x,\phi y).
\]

Since the torsion of $\nabla$ is $H$, we have
\[
d\theta(x,y)
=
(\nabla_x\theta)(y)
-(\nabla_y\theta)(x)
+
H(x,y,\theta^\sharp).
\]
Since $x,y,\theta^\sharp$ are horizontal,
\[
H(x,y,\theta^\sharp)
=
d^\phi F(x,y,\theta^\sharp)
=
\iota_{\theta^\sharp}d^\phi F(x,y).
\]
It follows that
\[
B+\iota_{\theta^\sharp}d^\phi F
=
d\theta
+
2\left((\rho^B)^{2,0+0,2}\right)_\phi,
\]
where
\[
\left((\rho^B)^{2,0+0,2}\right)_\phi(x,y)
=
(\rho^B)^{2,0+0,2}(x,\phi y).
\]

We now use the fact that the transverse Bismut Ricci form is closed. Using that, we get
\[
\begin{aligned}
d\left((\rho^B)^{2,0+0,2}\right)_\phi
&=
d^\phi(\rho^B)^{1,1}.
\end{aligned}
\]
Since $d^2\theta=0$, differentiating the equation defining $B$, we get
\[
d\left(
B+\iota_{\theta^\sharp}d^\phi F
\right)
=
2d^\phi(\rho^B)^{1,1}.
\]

Since $dH=0$, Cartan's formula gives
\[
\mathcal L_{\theta^\sharp}H
=
d\iota_{\theta^\sharp}H.
\]
Moreover,
\[
\iota_{\theta^\sharp}H
=
-\eta\wedge\iota_{\theta^\sharp}d\eta
+
\iota_{\theta^\sharp}d^\phi F.
\]
Since
\[
\delta H=B+\eta\wedge a,
\]
we have
\[
d\delta H
=
dB+d\eta\wedge a-\eta\wedge da.
\]
Furthermore,
\[
\begin{aligned}
d\iota_{\theta^\sharp}H
&=
-d\eta\wedge\iota_{\theta^\sharp}d\eta
+\eta\wedge d\iota_{\theta^\sharp}d\eta
+d\iota_{\theta^\sharp}d^\phi F.
\end{aligned}
\]
Consequently,
\[
\begin{aligned}
-d\delta H-\mathcal L_{\theta^\sharp}H
&=
-dB-d\eta\wedge a+\eta\wedge da
+d\eta\wedge\iota_{\theta^\sharp}d\eta
-\eta\wedge d\iota_{\theta^\sharp}d\eta
-d\iota_{\theta^\sharp}d^\phi F\\
&=
-d\left(
B+\iota_{\theta^\sharp}d^\phi F
\right)
-d\eta\wedge
\left(
a-\iota_{\theta^\sharp}d\eta
\right)
+\eta\wedge
d\left(
a-\iota_{\theta^\sharp}d\eta
\right).
\end{aligned}
\]
Since
\[
a-\iota_{\theta^\sharp}d\eta
=
-d^\phi c,
\]
we obtain
\[
\begin{aligned}
-d\delta H-\mathcal L_{\theta^\sharp}H
&=
-2d^\phi(\rho^B)^{1,1}
+d\eta\wedge d^\phi c
-\eta\wedge dd^\phi c\\
&=
-2d^\phi(\rho^B)^{1,1}
+d^\phi c\wedge d\eta
-\eta\wedge dd^\phi c.
\end{aligned}
\]

On the other hand, since
\[
H=\eta\wedge d\eta+d^\phi F
\]
and the transverse complex structure is fixed, we have
\[
\partial_t(d^\phi F)
=
d^\phi(\partial_tF).
\]
Therefore,
\[
\begin{aligned}
\partial_tH
&=
\partial_t\eta\wedge d\eta
+\eta\wedge d(\partial_t\eta)
+d^\phi(\partial_tF)\\
&=
-d^\phi c\wedge d\eta
-\eta\wedge dd^\phi c
-2d^\phi\left(
(\rho^B)^{1,1}-c\,d\eta
\right).
\end{aligned}
\]
Since $d\eta$ is closed and of type $(1,1)$,
\[
d^\phi(d\eta)=0.
\]
Hence,
\[
d^\phi(c\,d\eta)
=
d^\phi c\wedge d\eta.
\]
Substituting this identity into the evolution equation for $H$, we find
\[
\begin{aligned}
\partial_tH
&=
-d^\phi c\wedge d\eta
-\eta\wedge dd^\phi c
-2d^\phi(\rho^B)^{1,1}+2d^\phi c\wedge d\eta\\
&=
-2d^\phi(\rho^B)^{1,1}
+d^\phi c\wedge d\eta
-\eta\wedge dd^\phi c.
\end{aligned}
\]
Comparing the last two expressions gives
\[
\partial_tH
=
-d\delta H-\mathcal L_{\theta^\sharp}H.
\]
\end{proof}

\begin{rmk}
Let $\psi_t$ be the family of diffeomorphisms generated by
$\theta_t^\sharp$. Then
$\widetilde g_t:=\psi_t^*g_t,
\widetilde H_t:=\psi_t^*H_t$
satisfy the generalized Ricci flow
\[
\partial_t\widetilde g
=
-2\mathrm{Ric}(\widetilde g)+\frac12\widetilde H^2,
\qquad
\partial_t\widetilde H
=
\Delta_{\widetilde g}\widetilde H.
\]
\end{rmk}
\subsection{Strong Sasaki with torsion flow and pluriclosed flow}

In this section we prove that the Hermitian flow induced on $M\times S^1$ by the strong Sasaki-with-torsion flow is gauge-equivalent to the pluriclosed flow. Note that the solution to strong Sasaki-with-torsion flow on $M$ does not directly lift to the pluriclosed flow on $M\times S^1$ in general; indeed in the later case the complex structure is fixed, whereas in the former case $J$ is evolving.

\begin{thm}
\label{thm:SWT-pluriclosed}
Let $\Phi_t=(\varphi_t,\xi,\eta_t,g_t)$ be a solution of the
Sasaki-with-torsion flow starting from a compact strong
Sasaki-with-torsion manifold $(M, \Phi_0)$, and let
$(\widehat J_t,\widehat\omega_t)$ be the induced Hermitian structure
on $M\times S^1_{\tau}$. Set $
c_t:=\Lambda_{F_t}d\eta_t,$ 
and let $\chi_t$ be the family of diffeomorphisms generated by
$c_t\partial_\tau$ with $\chi_0=\operatorname{Id}$. Then
\[
\chi_t^*\widehat J_t=\widehat J_0
\]
and $\chi_t^*\widehat\omega_t$ is the solution of the pluriclosed flow
on $(M\times S^1,\widehat J_0)$, normalized by
$
\partial_t\omega
=
-2\left(\rho^B_\omega\right)^{1,1},
$
with initial datum $\widehat\omega_0$. In particular, if
$\omega_t^{\mathrm{PC}}$ denotes this solution, then
$
\chi_t^*\widehat\omega_t=\omega_t^{\mathrm{PC}}.
$
\end{thm}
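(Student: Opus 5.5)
The plan is to compute directly on $M\times S^1_\tau$. By Remark~\ref{rmk:oddeven}, the induced structure is
\[
\widehat\omega_t=d\tau\wedge\eta_t+F_t,\qquad \widehat J_t\xi=\partial_\tau,\qquad \widehat J_t\partial_\tau=-\xi,\qquad \widehat J_t|_{\ker\eta_t}=\phi_t .
\]
Its Bismut torsion is $H_t$, and its Bismut Ricci form equals $\rho^\nabla_t=\rho^B_t-d(c_t\eta_t)$, viewed as a form on $M\times S^1$ (Proposition~\ref{prop:rhoB}). Both $c_t$ and $\eta_t$ are $\tau$-independent.

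\textbf{Step 1: the complex structures.} I first show $\chi_t^*\widehat J_t=\widehat J_0$. The $(1,0)$-forms of $\widehat J_t$ are spanned by the pullbacks of the fixed transverse $(1,0)$-forms $dz_j$ (the transverse complex structure $J$ is fixed along the flow) together with $\eta_t+i\,d\tau$, up to sign convention.

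Since $\chi_t$ is the flow of $c_t\partial_\tau$, we have $\chi_t^*\tau=\tau+C_t$ with $C_t:=\int_0^t c_s\,ds$, a basic function, so $\chi_t^*d\tau=d\tau+dC_t$. It therefore suffices to show
\[
\eta_t+i(d\tau+dC_t)\in \mathrm{span}\{\eta_0+i\,d\tau\}\oplus \Lambda^{1,0}_{J}.
\]
From \eqref{eqn:flow1i}, $\eta_t=\eta_0-d^{\phi_0}C_t$. The form $-d^{\phi_0}C_t+i\,dC_t$ is, with the paper's convention $\phi\beta(X)=-\beta(\phi X)$, a multiple of $\partial C_t$ and hence of type $(1,0)$. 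The main task here is fixing signs so that the $(0,1)$ parts cancel; if they don't, I would flip the orientation of $\tau$ or the sign convention for $\chi_t$.

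\textbf{Step 2: the evolution of the pulled-back metric.} Set $\omega_t:=\chi_t^*\widehat\omega_t=(d\tau+dC_t)\wedge\eta_t+F_t$. Differentiating, and using $\partial_t\eta=-d^{\phi}c$ together with $\partial_tF=-2((\rho^B)^{1,1}-c\,d\eta)$ (where $\lambda=0$ by Proposition~\ref{prop:SST-preserved}), gives
\[
\partial_t\omega=dc\wedge\eta-(d\tau+dC)\wedge d^\phi c-2(\rho^B)^{1,1}+2c\,d\eta .
\]
I then compare this with $-2(\rho^B_{\omega_t})^{1,1}$, computed with respect to $\widehat J_0$. Bismut Ricci forms are natural under diffeomorphisms, so by Step~1 this equals $\chi_t^*$ of $-2(\rho^\nabla_t)^{1,1}$ taken with respect to $\widehat J_t$.

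Next I expand $\rho^\nabla=\rho^B-dc\wedge\eta-c\,d\eta$ and project onto its $(1,1)$ part for $\widehat J_t$. The horizontal pieces give $(\rho^B)^{1,1}-c\,d\eta$, since $d\eta$ is of type $(1,1)$. For the mixed term $dc\wedge\eta$, its $(1,1)$ part is $\tfrac12\bigl(dc\wedge\eta+\widehat J dc\wedge\widehat J\eta\bigr)$. Using $\widehat J\eta=\pm d\tau$ and $\widehat J\,dc=\pm d^\phi c$, this equals $\tfrac12\bigl(dc\wedge\eta\pm d^\phi c\wedge d\tau\bigr)$.

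\textbf{Step 3: matching the mixed terms.} The horizontal $(1,1)$ terms already agree. For the mixed terms, one has
\[
dC\wedge d^\phi c\;\text{ vs. }\;\chi_t^*(d\tau)\text{-substitution}:
\]
the $dC$ terms in $\partial_t\omega$ are exactly accounted for by replacing $d\tau$ with $\chi_t^*d\tau$, while the leftover $(2,0)+(0,2)$ part of $dc\wedge\eta$ is absorbed by the $\widehat J_t$-projection. I expect this bookkeeping of the mixed terms to be the main obstacle: the overall factor (the $\tfrac12$ from the projection against the coefficient $1$ in $\partial_t\omega$) must come out consistently. If the naive projection leaves a discrepancy, I would instead take the strong Sasaki-with-torsion flow as given, invoke Proposition~\ref{prop:stationary_sst} to identify it with the gauge-fixed generalized Ricci flow on $M\times S^1$, and use the known equivalence between pluriclosed flow and generalized Ricci flow \cite{GFStr}. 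In that case the gauge vector field $\theta^\sharp$ is replaced by the Lee field of $\widehat\omega$, which is $\theta^\sharp+c\,\partial_\tau$-type, and the $c\,\partial_\tau$ part accounts precisely for $\chi_t$.

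\textbf{Step 4: conclusion.} Once $\partial_t\omega_t=-2(\rho^B_{\omega_t})^{1,1}$ holds with $\omega_0=\widehat\omega_0$, I conclude $\omega_t=\omega^{\mathrm{PC}}_t$ by uniqueness of the pluriclosed flow on the compact manifold $M\times S^1$ \cite{StreetsTian}.
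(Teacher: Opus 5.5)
Your proposal is correct. Its core computation is the paper's; the difference is that you integrate the gauge explicitly, whereas the paper works infinitesimally.

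The paper checks $\partial_t\widehat J_t+\mathcal L_{X_t}\widehat J_t=0$ and $\partial_t\widehat\omega_t+\mathcal L_{X_t}\widehat\omega_t=-2(\widehat\rho^B_t)^{1,1}_{\widehat J_t}$ for $X_t=c_t\partial_\tau$, and then pulls back by $\chi_t$. You instead use that $c_t$ is basic to write $\chi_t(p,\tau)=(p,\tau+C_t(p))$. This makes $\chi_t^*\widehat\omega_t=(d\tau+dC_t)\wedge\eta_t+F_t$ explicit, and you then verify $\chi_t^*\widehat J_t=\widehat J_0$ on $(1,0)$-forms. Your version shows the mechanism clearly:
\begin{itemize}
\item With the paper's convention $d^{\phi}C=-dC\circ\phi$, one has $-d^{\phi_0}C_t+i\,dC_t=2i\,\partial C_t$, so $\chi_t^*(\eta_t+i\,d\tau)=(\eta_0+i\,d\tau)+2i\,\partial C_t$ is of type $(1,0)$ for $\widehat J_0$.
\item The horizontal $(1,0)$-forms are unchanged, because $\phi_t-\phi_0$ takes values in $\mathbb{R}\xi$.
\end{itemize}
The paper's route has the advantage of not needing the explicit flow.

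The signs you left open close up with the paper's conventions as they stand. One has $\eta_t\circ\widehat J_t=-d\tau$ and $dc_t\circ\widehat J_t=-d^{\phi}c_t$. Hence $(dc\wedge\eta)^{1,1}=\tfrac12(dc\wedge\eta+d^\phi c\wedge d\tau)$, and
\[
-2\chi_t^*\bigl(\widehat\rho^B_t\bigr)^{1,1}=-2(\rho^B)^{1,1}+2c\,d\eta+dc\wedge\eta-(d\tau+dC_t)\wedge d^\phi c ,
\]
which matches your expression for $\partial_t\omega_t$ term by term. The factor $\tfrac12$ from the projection times $-2$ gives exactly coefficient $1$.

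There is also no leftover $(2,0)+(0,2)$ part to absorb. The mixed terms $dc\wedge\eta-d\tau\wedge d^\phi c$ are already equal to $2(dc\wedge\eta)^{1,1}_{\widehat J_t}$. So no flip of $\tau$ or of $\chi_t$ is needed, and the generalized-Ricci-flow fallback is unnecessary. That fallback would also not be easier: passing from generalized Ricci flow back to pluriclosed flow requires controlling the evolving complex structure, which is exactly what your Step~1 supplies.
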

\begin{proof}
Let $(\widehat g_t,\widehat J_t)$ be the Hermitian structure induced on $N=M \times S^1$ by $\Phi_t$. Denote $Z:=\partial_\tau$, where $\tau$ is the coordinate on the $S^1$-circle. The Hermitian structure induced on the product is therefore 
\[
\widehat g_t=d\tau^2+g_t,
\qquad
\widehat\omega_t=d\tau\wedge\eta_t+F_t,
\]
and
\[
\widehat J_t(X+aZ)
=
\varphi_tX-a\xi+\eta_t(X)Z,
\]
in agreement with \eqref{eqn:cpxstructure}.
The Bismut torsion of $(\widehat g_t,\widehat J_t)$ is \[ \widehat H_t = d^c_{\widehat J_t}\widehat\omega_t = \eta_t\wedge d\eta_t+d^{\varphi_t}F_t = H_t. \] 
Since the strong condition is preserved by the Sasaki-with-torsion flow, \[ d\widehat H_t=dH_t=0. \] Thus $\widehat\omega_t$ is pluriclosed with respect to $\widehat J_t$ for every $t$; notice, however, that
$\widehat J_t$ is not fixed.

We next determine the evolution of $\widehat J_t$. Since the
transverse complex structure is fixed, $\partial_t\varphi_t$ takes
values in the Reeb direction, and hence
\[
\partial_t\varphi_t=\xi\otimes dc_t.
\]
Therefore, for every $Y\in \Gamma(TM)$,
\[
(\partial_t\widehat J_t)(Y)
=
dc_t(Y)\xi-d^{\varphi_t}c_t(Y)Z,
\qquad
(\partial_t\widehat J_t)(Z)=0.
\]

Let
\[
X_t:=c_tZ.
\]
All the tensors involved are invariant in the $S^1$-direction, and
$c_t$ is independent of $\tau$. Thus,
\[
[X_t,Y]=-dc_t(Y)Z.
\]
Moreover,
\[
\begin{aligned}
[X_t,\widehat J_tY]
&=
[c_tZ,\varphi_tY+\eta_t(Y)Z]\\
&=
-dc_t(\varphi_tY)Z\\
&=
d^{\varphi_t}c_t(Y)Z.
\end{aligned}
\]
Using $\widehat J_tZ=-\xi$, we obtain
\[
\begin{aligned}
(\mathcal L_{X_t}\widehat J_t)(Y)
&=
[X_t,\widehat J_tY]
-
\widehat J_t[X_t,Y]\\
&=
d^{\varphi_t}c_t(Y)Z-dc_t(Y)\xi.
\end{aligned}
\]
Using that $\partial_t\widehat J_t
+
\mathcal L_{X_t}\widehat J_t (Z)=0$, as $c$ is basic, we get
\[
\partial_t\widehat J_t
+
\mathcal L_{X_t}\widehat J_t
=
0.
\]

We now compute the evolution of $\widehat\omega_t$. Since $dH_t=0$,
the tensor $\lambda_t=\frac12\Lambda_{F_t}dH_t$ vanishes, and the
evolution equation for $F_t$ becomes
\[
\partial_tF_t
=
-2\left((\rho_t^B)^{1,1}-c_t\,d\eta_t\right).
\]
Therefore,
\begin{equation}
\label{eqn:product-omega-evolution}
\partial_t\widehat\omega_t
=
-d\tau\wedge d^{\varphi_t}c_t
-
2\left((\rho_t^B)^{1,1}-c_t\,d\eta_t\right).
\end{equation}

The Bismut connection of
$(\widehat g_t,\widehat J_t)$ splits as
\[
\widehat\nabla_t^B
=
\nabla^{\mathrm{LC}}_{S^1}\oplus\nabla_t.
\]
It follows that its Bismut Ricci form is
\[
\widehat\rho_t^B
=
\rho_t^\nabla
=
\rho_t^B-d(c_t\eta_t),
\]
and hence
\[
\widehat\rho_t^B
=
\rho_t^B-c_t\,d\eta_t-dc_t\wedge\eta_t.
\]

We determine its $(1,1)$-component with respect to
$\widehat J_t$. With our conventions
\begin{equation}
\label{eqn:product-Bismut-Ricci}
\left(\widehat\rho_t^B\right)^{1,1}
=
(\rho_t^B)^{1,1}
-
c_t\,d\eta_t
-
\frac12dc_t\wedge\eta_t
+
\frac12d\tau\wedge d^{\varphi_t}c_t.
\end{equation}

On the other hand,
\[
\mathcal L_{X_t}d\tau
=
d(\iota_{X_t}d\tau)
=
dc_t,
\]
while
\[
\mathcal L_{X_t}\eta_t=0,
\qquad
\mathcal L_{X_t}F_t=0.
\]
It follows that
\[
\mathcal L_{X_t}\widehat\omega_t
=
dc_t\wedge\eta_t.
\]
Combining this identity with
\eqref{eqn:product-omega-evolution}, we obtain
\[
\begin{aligned}
\partial_t\widehat\omega_t
+
\mathcal L_{X_t}\widehat\omega_t
={}&
-2\left((\rho_t^B)^{1,1}-c_t\,d\eta_t\right)
+
dc_t\wedge\eta_t
-
d\tau\wedge d^{\varphi_t}c_t.
\end{aligned}
\]
By \eqref{eqn:product-Bismut-Ricci}, this is precisely
\[
\partial_t\widehat\omega_t
+
\mathcal L_{X_t}\widehat\omega_t
=
-2\left(\widehat\rho_t^B\right)^{1,1}_{\widehat J_t}.
\]

Let $\chi_t$ be the family of diffeomorphisms generated by $X_t$.
Then
\[
\begin{aligned}
\frac{d}{dt}\left(\chi_t^*\widehat J_t\right)
&=
\chi_t^*
\left(
\partial_t\widehat J_t
+
\mathcal L_{X_t}\widehat J_t
\right)=0.
\end{aligned}
\]
Since $\chi_0=\operatorname{Id}$, we conclude that
\[
\chi_t^*\widehat J_t=\widehat J_0.
\]

Set
\[
\overline\omega_t:=\chi_t^*\widehat\omega_t.
\]
Using the invariance of the Bismut connection by holomorphic diffeomorphisms, we find
\[
\begin{aligned}
\partial_t\overline\omega_t
&=
\chi_t^*
\left(
\partial_t\widehat\omega_t
+
\mathcal L_{X_t}\widehat\omega_t
\right)=
-2\chi_t^*
\left(
\left(\widehat\rho_t^B\right)^{1,1}_{\widehat J_t}
\right)=
-2\left(
\rho^B_{\overline\omega_t}
\right)^{1,1}_{\widehat J_0}.
\end{aligned}
\]
Moreover,
\[
\overline\omega_0=\widehat\omega_0.
\]
Thus $\overline\omega_t$ solves the pluriclosed flow on the fixed
complex manifold $(M\times S^1,\widehat J_0)$ with initial datum
$\widehat\omega_0$. By uniqueness,
\[
\chi_t^*\widehat\omega_t
=
\omega_t^{\mathrm{PC}}.
\]
 \end{proof}

\bibliographystyle{plain} 
\bibliography{biblio} 
\noindent
\small 
B. Brienza: IMPA, Estrada Dona Castorina, 110, CEP 22460-320, Rio de Janeiro, RJ (Brasil) \\
Email: \texttt{brienza.beatrice@impa.br}
\vskip0.5em
\noindent
A. Fino: Dipartimento di Matematica ``G. Peano'', Universit\`{a} degli studi di Torino, Via Carlo Alberto 10, Torino (Italy) \\
Department of Mathematics and Statistics, Florida International University, Miami, FL 33199, (USA) \\
Email: \texttt{annamaria.fino@unito.it} \, \, \texttt{afino@fiu.edu}
\vskip0.5em
\noindent
U. Fowdar: Institute of Mathematics, University of Warsaw, Banacha 2, 02-097 Warszawa (Poland)\\
Email: \texttt{u.fowdar@uw.edu.pl}
\vskip0.5em
\noindent
G. Grantcharov: Florida International University, Miami, FL 33199, (USA) \\
Institute of Mathematics and Informatics Bulgarian Academy of Sciences, \\
Acad. Georgi Bonchev str. 8, 1113 Sofia, (Bulgaria)\\
Email: \texttt{grantchg@fiu.edu}\, \,\texttt{grantchg@math.bas.bg}

\end{document}